\documentclass[11pt]{article}
\usepackage[utf8]{inputenc}
\usepackage{lmodern}
\usepackage{subfiles}
\usepackage{enumitem}
\setenumerate{topsep=6pt,ref={\normalfont(\roman*)},label={\normalfont(\roman*)}, itemsep=0pt} 

\usepackage{amsfonts}
\usepackage{amsthm}
\usepackage{amsmath}
\usepackage{amssymb}
\usepackage{amscd}
\usepackage{mathrsfs}
\usepackage{mathtools}
\usepackage{bbm}
\usepackage{esint}
\usepackage{slashed}

\usepackage[margin=3cm]{geometry}
\usepackage{setspace}
\usepackage{indentfirst}
\usepackage{graphicx}
\usepackage{graphics}
\usepackage{lscape}
\usepackage{pgf,tikz}
\usepackage{tikz-cd}
\usepackage{color}
\usepackage{pict2e}
\usepackage{epic}
\usepackage{epstopdf}
\usepackage{titlesec, titlefoot}
\titleformat{\section}[block]{\Large\bfseries\filcenter}{\thesection}{1em}{}
\usepackage{commath}
\usepackage{float}
\usepackage{caption}
\usepackage{etoolbox}
\usepackage[affil-it]{authblk}
\usepackage{combelow}

\usepackage[hidelinks, bookmarksdepth=3]{hyperref}
\hypersetup{bookmarksopen=true} 
\usepackage{hypcap}

\graphicspath{{./Pictures/}}
\allowdisplaybreaks

\expandafter\def\expandafter\normalsize\expandafter{%
\normalsize
\setlength\abovedisplayskip{6pt}
\setlength\belowdisplayskip{6pt}
\setlength\abovedisplayshortskip{6pt}
\setlength\belowdisplayshortskip{6pt}
}

\theoremstyle{plain}

\renewcommand*\thesection{\arabic{section}}
\numberwithin{equation}{section} 

\newtheorem{theorem}{Theorem}[section]
\newtheorem{lemma}[theorem]{Lemma}
\newtheorem*{lemma*}{Lemma}
\newtheorem{proposition}[theorem]{Proposition}
\newtheorem{corollary}[theorem]{Corollary}

\theoremstyle{definition}
\newtheorem{definition}[theorem]{Definition}
\newtheorem{remark}[theorem]{Remark}

\expandafter\let\expandafter\oldproof\csname\string\proof\endcsname
\let\oldendproof\endproof
\renewenvironment{proof}[1][\proofname]{%
\oldproof[\upshape \bfseries #1]%
}{\oldendproof}

\makeatletter
\def\@makechapterhead#1{%
\vspace*{50\p@}%
{\parindent \z@ \raggedright \normalfont
\interlinepenalty\@M
\Huge\bfseries  \thechapter.\quad #1\par\nobreak
\vskip 40\p@
}}
\makeatother

\renewcommand{\Re}{\operatorname{Re}}
\renewcommand{\Im}{\operatorname{Im}}

\def \a{\alpha}
\def \R {\mathbb{R}}
\def \Z {\mathbb{Z}}
\def \C{\mathbb{C}}

\def \N{\mathbb{N}}

\def \e{\varepsilon}

\def \p{\partial}
\def \mc{\mathcal}
\def \mb{\mathbb}

\def \tp{\textup}
\def \Id{\textup{Id}}
\def \id{\textup{id}}

\def \FS{\mathrm{FS}}
\newcommand{\eps}{\varepsilon}
\newcommand{\Om}{\Omega}

\DeclareMathOperator{\CP}{\mathbb{CP}}
\DeclareMathOperator{\B}{\mathbb{B}}

\DeclareMathOperator{\sing}{sing}
\DeclareMathOperator{\ddiv}{div}
\DeclareMathOperator{\supp}{supp}

\DeclareMathOperator{\tr}{tr}

\begin{document}

	\title{\textbf{Minimizing tangent maps\\ into complex projective space}}

	\author[1]{{\Large Matilde Gianocca}}
    \author[2,3]{{\Large Andr\'e Guerra}}
    \author[4]{{\Large David Ziener}}

    \affil[1]{\small Department of Mathematics, University of Chicago,\protect\\
    5734 S. University Avenue, Chicago, IL 60637, USA
    \protect\\
    {\tt{mgianocca@uchicago.edu}} \vspace{1em} \ }
	\affil[2]{\small Department of Pure Mathematics and Mathematical Statistics,  University of Cambridge,\protect\\  Wilberforce Rd, Cambridge CB3 0WB, UK
	\protect\\
	{\tt{adblg2@cam.ac.uk}} \vspace{1em} \ }
    \affil[3]{\small Departamento de Matem\'atica, Instituto Superior T\'ecnico,\protect\\
        Av. Rovisco Pais 1049-001 Lisboa, Portugal
        \protect\\
    {\tt{andre.l.guerra@tecnico.ulisboa.pt}} \vspace{1em} \ }
	\affil[4]{\small Max Planck Institute for Mathematics in the Sciences,\protect\\  Inselstraße 22, 04103 Leipzig, Germany
	\protect\\
	{\tt{david.ziener@mis.mpg.de}} \vspace{1em} \ }

    \date{}
	\maketitle
		
	\begin{abstract}
	We classify all nonconstant smooth harmonic maps $\mb S^2\to \CP^n$ and $\mb S^3\to \CP^n$ whose 0-homogeneous extensions are minimizing: they are, respectively, isometric embeddings or holomorphic isometric embeddings composed with a Hopf map.
	\end{abstract}

    \medskip

    \noindent\textbf{Mathematics Subject Classification (2020).} Primary 58E20; Secondary 53C43, 35J50.

    \smallskip

    \noindent\textbf{Keywords and phrases.}
    Energy-minimizing harmonic maps, tangent maps, complex projective space, Hopf map, harmonic morphisms, isolated singularities.

    \vspace{1cm}

\section{Introduction}
In this paper we study the fine regularity properties of harmonic maps. Let $\Omega\subset \mathbb{R}^d$ be a domain and let $N\subset \mathbb{R}^L$ be a compact smooth manifold. When $d=2$, weakly harmonic maps are smooth \cite{Helein2002} but, as soon as $d\geq 3$, harmonic maps may be singular, even when $N$ is a sphere \cite{Lin2008}. 

For  stationary harmonic maps, the local structure of such singularities is described by \textit{tangent maps}. Indeed, the monotonicity formula implies that, after suitable rescalings around a singular point, a stationary harmonic map converges to a 0-homogeneous harmonic map
$$\hat u(r \omega):= u(\omega), \qquad   u\colon \mb S^{d-1}\to N, \qquad r\in (0,\infty), \omega\in \mb S^{d-1}.$$
Tangent maps provide models for the original map near its singular set, and thus their classification is a fundamental step toward understanding the structure of the possible singularities as well as the geometry of the singular set.

For maps with isolated singularities, that is when $u\in C^\infty(\mb S^{d-1};N)$, the corresponding tangent map is unique, i.e.\ it is independent of the choice of rescalings \cite{Simon1983}. Concerning the structure of the class of minimizing tangent maps  it is known that, if $\pi_1(N)$ is finite, then this family is compact in $W^{1,2}(\mb B^n;N)$, see \cite[Theorem 5.7]{Mazowiecka2018} and \cite[Theorem 6.4]{Hardt1987a}. This suggests that, at least in low dimensions, the collection of minimizing tangent maps may be small.

Nonetheless,  and despite being such a classical problem, the classification of non-trivial minimizing tangent maps remains very incomplete, and is only known in two instances:
\begin{enumerate}
\item\label{it:BCL} if $d=3$, $N=\mb S^2$, and if $\hat u$ is minimizing,  then $u\colon \mb S^2\to \mb S^2$ is an isometry \cite{Brezis1986};
\item \label{it:Naka} if $d=4$, $N=\mb S^3$, and if $\hat u$ is stable, then $u\colon \mb S^3\to \mb S^3$ is an isometry \cite{Nakajima2006}. 
\end{enumerate}

In this paper we are especially concerned with the case where $N=\CP^n$ is the complex projective space. This is the prototypical  example of a K\"ahler manifold and there is a long and fruitful interplay between harmonic maps and complex geometry, see e.g.\ the classical works \cite{Sampson1978,Siu1980a,Siu1980b}. From the point of view of regularity theory, the key relevant fact is that $\mathbb{CP}^{n}$ contains totally geodesic projective lines $\mathbb{CP}^{1}\cong \mb S^{2}$, which generates
\begin{equation}
\label{eq:pi2}
\pi_{2}(\mathbb{CP}^{n})\cong \mathbb{Z}.
\end{equation}
In particular smooth maps $u\colon \mb S^2\to \mathbb{CP}^{n}$ carry a natural notion of degree which acts as a topological obstruction to the existence of a smooth extension to $\mb B^3$.

In this paper, we present two new classification results for maps into $\CP^n$, for any $n\geq 1$. Our first theorem extends \ref{it:BCL} to all projective spaces:

\begin{theorem}
    \label{thm:Class3d}
    Let $u\in C^\infty(\mb S^2;\CP^n)$ be a nonconstant harmonic map.  If
    $\hat u$ is minimizing, then $u$ is an isometric embedding.
\end{theorem}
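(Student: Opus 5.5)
Throughout, $\CP^n$ carries the Fubini--Study metric normalized so that projective lines are round unit spheres.

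The plan has three steps: classify the harmonic maps $u\colon \mb S^2\to\CP^n$, compare the energy of $\hat u$ with that of a competitor to rule out most of them, and then identify the survivors.

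\textbf{Step 1 (which harmonic spheres can occur).} Harmonic two-spheres in $\CP^n$ are classified by Eells--Wood (Din--Zakrzewski). Each one is obtained from a holomorphic curve $f\colon \mb S^2\to\CP^n$ through its harmonic sequence of $\partial'$-Gauss transforms, and its energy is $4\pi$ times a positive integer. The energy of $\hat u$ on $\mb B^3$ equals $E(u)=\frac12\int_{\mb S^2}|\nabla u|^2$. A map is an isometric embedding exactly when it is a $\pm$-holomorphic map of degree $\pm1$ onto a projective line; such a map has energy $4\pi$, the area of a line.

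\textbf{Step 2 (an energy bound from minimality).} I would imitate Brezis--Coron--Lieb. Let $u$ have degree $k=\deg u$, defined via \eqref{eq:pi2}. Since $\mb S^2\subset \CP^n$ is totally geodesic, I can compare $\hat u$ with a map $v$ obtained by moving the singular point off-center, or by splitting it into several point singularities. For a map with singularities of degrees $d_i$, the minimal energy should be controlled by $4\pi\sum|d_i|$ times a distance-type quantity. This gives the lower bound $E\ge 4\pi|k|$ coming from $|\nabla u|^2\ge 2|u^*\omega|$. Minimality of $\hat u$ then yields an upper bound, and the Kähler identity $E(u)=E'(u)+E''(u)$ with $E'-E''=4\pi k$ pushes things further.

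The cleanest route is a second-variation or perturbation argument. Take the conformal transformations $\phi_t$ of $\mb S^2$, extended 0-homogeneously and cut off, and use them as competitors $\hat u\circ\Phi_t$. Minimality forces $u$ to satisfy the BCL-type stability condition: the energy is not decreased by Möbius reparametrizations of the domain, as a function of the vertex position. Following BCL, the quantity $\int_{\mb S^2}|\nabla u|^2\,x\,dx$ must vanish and a second-order condition must hold. Moving the vertex to $a\in\mb B^3$ changes the energy by an amount computable from $E(u)-8\pi$-type terms. From this I expect to deduce $E(u)\le 4\pi|k|$ together with equality in $|\nabla u|^2\ge 2|u^*\omega|$. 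That equality means $u$ is $\pm$-holomorphic.

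\textbf{Step 3 (degree one).} It remains to show that a minimizing $\pm$-holomorphic $u$ has $|k|=1$. For $|k|\ge2$, I would split the singularity into $|k|$ degree-one singularities at nearby points, using the projective line to build the competitor. BCL show that the minimal energy of such a configuration equals $4\pi$ times the length of a minimal connection. A dipole-type construction, gluing degree-$\pm1$ pieces along a line, should give strictly smaller energy than $\hat u$ whenever $|k|\ge2$. The last point is that a degree-one holomorphic map $\mb S^2\to\CP^n$ has image a projective line: the image is a rational curve of degree one, hence a line. Such a map is a Möbius transformation onto that line. Finally, the balancing condition $\int|\nabla u|^2x=0$ forces this Möbius map to be an isometry.

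\textbf{Main obstacle.} The hardest step is Step 2: showing that minimizing (non-holomorphic) harmonic spheres are $\pm$-holomorphic. Eells--Wood maps that are not $\pm$-holomorphic carry excess energy, but excluding them needs an explicit competitor. My first attempt would exploit the fact that the Gauss-transform construction deforms $u$ within a $\CP^1$- or $\CP^2$-bundle. From this I would build a competitor on $\mb B^3$ that interpolates between $u$ on $\partial \mb B^3$ and a holomorphic map of the same degree. Its energy should be about $E(u)\cdot r+4\pi|k|(1-r)$, which is strictly less than $E(u)$ when $E(u)>4\pi|k|$.
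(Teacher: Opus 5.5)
Your outline has the right architecture: Eells--Wood, then exclude non-$\pm$holomorphic spheres, then exclude degree $\geq 2$, then degree one. Your balancing argument for a Möbius map onto a line is fine. However, the two steps that carry the content have no working mechanism.

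\textbf{Step 2.} Every competitor built on the domain side has energy $\frac12\int\tr\,\Psi^*(u^*h)$. This covers Möbius reparametrizations, moving the vertex, and $u\circ\Psi$ for any $\Psi$. Since harmonic spheres are weakly conformal, $u^*h=\frac12|\nabla u|^2g_{\mb S^2}$. So these competitors see only the energy density. They give the balancing condition \eqref{eq:balancingCondition} and higher-order analogues, but nothing comparing $|\nabla u|^2$ with $u^*\omega_\FS$, and certainly not $E(u)\le4\pi|k|$. Your ``I expect to deduce'' has no mechanism behind it.

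Your fallback interpolation estimate omits the only term that matters: the radial cost of the transition. In $W^{1,2}(\mb B^3)$ the degree is no obstruction, since one can interpolate to a constant through point singularities. So if that cost were negligible, the same heuristic would show that no cone is minimizing.

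The missing idea is to make this cost computable. The paper interpolates to a constant via the projective dilations $F^a_t$ and averages over the centre $a$. By Lemma \ref{lem:averageFt}, the averaged angular energy on each sphere is exactly $E(u)$, and the averaged radial density is $4A_m(t)|t'|^2$. Optimizing the profile then gives the universal bound \eqref{eq:upperEnergyCPn}, $E(u)\le8\pi(\int_0^1\sqrt{A_m})^2$, where $m+1$ is the complex dimension of the linear span of $u(\mb S^2)$. This is played against the \emph{quantitative} Eells--Wood bounds of Theorem \ref{thm:harmonicSphereDichotomyCPn}: $E\ge4\pi m$ in the $\pm$holomorphic case and $E\ge4\pi(3m-2)$ otherwise. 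Knowing only $E\in4\pi\mathbb N$ is not enough. Together with Lemma \ref{lem:AmIntegralEstimate}, this yields $m\le2$, $u$ $\pm$holomorphic, and $|d(u)|\le2$.

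\textbf{Step 3.} Building the competitor ``using the projective line'' only works when $u(\mb S^2)$ lies in a line, and then you can simply quote Brezis--Coron--Lieb. After the reduction above, the remaining case is exactly one where this fails: a degree-two conic spanning $\CP^2$. No line contains it. The dipole construction and the minimal-connection formula you invoke are statements about $\mb S^2$-valued maps, so you have no competitor at all here.

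This is the hardest part of the paper's proof, handled as follows:
\begin{itemize}
\item Stationarity normalizes $u$ to $\Sigma_D(\cdot,\cdot)$ with $D$ real diagonal (Lemma \ref{lem:StationarityCPDeg2}).
\item The polarization $v(x)=\Sigma_D(\omega_+(x),\omega_-(x))$ splits the singularity into two degree-one singularities at $\pm e_3$.
\item An energy computation in prolate spheroidal coordinates shows that $E_{B_r}(v)-E_{B_r}(\hat u)\to R-8\pi<0$ (Lemma \ref{lem:EnergyCompR}).
\item The boundary mismatch is absorbed by the decay estimate of Lemma \ref{lem:AsymptoticCompetitor}.
\end{itemize}
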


Our second classification result can be seen as an analogue of \ref{it:Naka}, although the geometry in our problem is substantially more involved. Starting with $n=1$ for simplicity, there is an analogue of \eqref{eq:pi2}, namely
$$\pi_3(\CP^1)\cong \Z.$$
Thus smooth maps $u\colon \mb S^3\to \CP^1$ carry a non-trivial degree, the so-called \textit{Hopf degree}, which again acts as an obstruction to the existence of smooth extensions to $\mb B^4$. In fact, the group $\pi_3(\CP^1)$ is generated by the \textit{Hopf map}
\begin{equation}
\label{eq:Hopfintro}
H\colon \mb S^3\subset \C^2 \to \CP^1, \qquad (z_0,z_1)\mapsto [z_0:z_1],
\end{equation}
whose extension $\hat H\colon \mb B^4\to \CP^1$ is minimizing by \cite{Coron1989}, see also \cite{Nichols1995}. Our next result shows that this is essentially the only minimizing tangent map:

\begin{theorem}
    \label{thm:Class4d}
    Let $u\in C^\infty(\mb S^3;\CP^n)$ be a nonconstant harmonic map.
    If $\hat u$ is minimizing, then there is an isometry $Q\colon\mb S^3\to\mb S^3$ and a holomorphic isometric embedding
    $\iota\colon\CP^1\to\CP^n$ such that
    \[
        u=\iota\circ H\circ Q.
    \]
\end{theorem}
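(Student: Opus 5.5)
We argue by combining the second variation of energy for 0-homogeneous maps with the rigidity of harmonic maps into $\CP^n$. \textbf{Step 1: minimality forces stability, and we exploit conformal test variations.} If $\hat u$ minimizes on $\mb B^4$, it is in particular stable and also minimizes among radial reparametrizations. We test the second variation with variations generated by conformal vector fields of $\mb S^3$: for $v\in\R^4$ let $\xi_v(\omega)=v-(v\cdot\omega)\omega$ and take $\hat u_t(x)=u\circ\phi_t$, composed with a radial cutoff, as in Nakajima's argument \cite{Nakajima2006}. Averaging the second variation over $v\in\mb S^3$ expresses the averaged second variation as an integral over $\mb S^3$ of a quantity built from the stress--energy tensor $S_u=\tfrac12|du|^2g-u^*g_{\CP^n}$. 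The stress--energy tensor is divergence-free because $u$ is harmonic, and in dimension $3$ the conformal group is rich enough that nonnegativity of the averaged quantity should force the pointwise inequality $|du|^2\le 2$. We also expect it to force the eigenvalues of $u^*g_{\CP^n}$ to be tightly constrained, namely $u^*g=\lambda(g-\nu\otimes\nu)$ for some unit field $\nu$. On the other hand, a Pohozaev/Bochner identity on $\mb S^3$ bounds $\int|du|^2$ from below. The first task is to obtain both inequalities with the correct constants, normalized so that $H$ (with $|dH|^2=2$ on the unit sphere, for the Fubini--Study metric of curvature $4$) is the equality case.

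\textbf{Step 2: rigidity via the equality case.} Equality in the averaged estimate should force $u$ to be a \emph{horizontally conformal} submersion of rank $2$ with constant dilation, i.e.\ a harmonic morphism $\mb S^3\to\CP^n$. We then invoke the structure theory of harmonic morphisms from $3$-manifolds (Baird--Wood). The fibres are geodesics, so the map is horizontally conformal with totally geodesic fibres. On the round $\mb S^3$ with constant dilation, such a map is a Hopf fibration $H\circ Q$, possibly composed with an orientation reversal absorbed into $Q$, followed by a map $\CP^1\to\CP^n$. That induced map must be a conformal harmonic map of $\CP^1$, hence $\pm$-holomorphic or a Veronese-type isotropic map by the Eells--Wood classification. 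The energy identity forces its pulled-back metric to be exactly the Fubini--Study one, and the second-variation analysis applied in normal directions rules out the non-holomorphic isotropic maps of higher energy. So the map is a holomorphic isometric embedding $\iota$; an antiholomorphic one is absorbed by reflecting $Q$.

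\textbf{Step 3: the main obstacle} is Step 1, specifically extracting the pointwise horizontal conformality from stability. In the $\mb S^3\to\mb S^3$ case the target curvature terms combine neatly with the conformal test fields. For $\CP^n$ the curvature is not constant, so the term $\langle R(du\,\xi,\cdot)\cdot,\cdot\rangle$ depends on the Kähler angle. My first attempt will be to use the complex structure $J$ to produce extra test fields $J\,du(\xi_v)$, in the spirit of Lawson--Simons arguments for $\CP^n$, and to average over both families. This should cancel the holomorphic sectional curvature terms and leave a quantity that is nonnegative only when $u$ is horizontally conformal with $J$-invariant image of $du$. If stability alone proves insufficient, I will fall back on the full minimality. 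Concretely, I will compare $\hat u$ against explicit competitors obtained by projecting $u$ onto a totally geodesic $\CP^1$, or by a Hardt--Lin type radial deformation, to rule out the remaining non-horizontally-conformal harmonic maps.
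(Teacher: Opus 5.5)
Your proposal has two genuine gaps. The step you flag as the main obstacle (Step 3) is actually the part that works.

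\textbf{Isometry cannot come from stability.} Steps 1--2 try to obtain constant dilation and isometry from the second variation, and this cannot work. By Remark \ref{rmk:minim}, already for $n=1$ every $u=\psi\circ H$ with $\psi$ a M\"obius map has a stable-stationary cone. These maps have the same energy as $H$: by \eqref{eq:CoareaHopf}, $\int_{\mb S^3}|d(\psi\circ H)|^2=2\pi\int_{\mb S^2}|d\psi|^2=16\pi^2$. Yet $|du|^2$ is non-constant, so it exceeds the Hopf value somewhere. Consequently:
\begin{enumerate}
\item no averaged second variation can yield your pointwise bound $|du|^2\le|dH|^2$;
\item no ``energy identity'' can force the induced map on $\CP^1$ to be isometric;
\item nothing in your argument excludes holomorphic $f$ of degree $\geq2$.
\end{enumerate}
Your fallback to minimality targets non-horizontally-conformal maps, which is the wrong issue. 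The missing idea is a reduction to $\mb B^3$. Once $u=f\circ H\circ Q$, the cone factors as $\hat u=V\circ\widetilde H$, with $V(y)=f(y/|y|)$ and $\widetilde H(x)=|x|^2H(x/|x|)$. Because the Hopf map has constant dilation, $\int_{\mb B^4}|d(W\circ\widetilde H)|^2=\pi\int_{\mb B^3}|dW|^2$ for every competitor $W$ (Lemma \ref{lem:ProjectionEnergy}). So minimality of $\hat u$ transfers to minimality of $V$. Theorem \ref{thm:Class3d} then gives that $f$ is an isometric embedding. That theorem carries most of the weight: the Eells--Wood energy dichotomy, averaged projective dilations, the Veronese splitting competitor, and Brezis--Coron--Lieb.

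\textbf{Harmonic morphisms into $\CP^n$, $n\ge2$.} The phrase ``harmonic morphism $\mb S^3\to\CP^n$'' is vacuous for $n\ge2$. Horizontal weak conformality requires $du_x$ to be surjective wherever it is nonzero, which is impossible from dimension $3$ to $2n\ge4$. What stability actually gives is your Step 3, via the following chain.
\begin{enumerate}
\item Hardy's inequality (Proposition \ref{prop:stabilitycharact}) gives $\lambda_1(u)\ge-(m-1)^2/4=-1$.
\item Xin's identity \eqref{eq:Xin} says the fields $du(X_a)$ are Jacobi eigenfields with eigenvalue $2-m=-1$. So your fields $du(\xi_v)$ saturate the bound and carry no information, which is why Step 1 yields nothing.
\item Since $\nabla I=0$, testing $J_u+1\ge0$ on $I\,du(X_{e_k})$ and summing with \eqref{eq:ParsevalConformalFields} gives $0\le-\frac12\int|A+IAI|^2$, where $A=du\,du^*$.
\end{enumerate}
This is pseudo-horizontal conformality: $du$ has rank $2$ (where nonzero) with complex-line image. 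It is not constant dilation.

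To use Baird--Wood you must then reduce to $\CP^1$. By Sard, choose a projective $(n-2)$-plane $\mb P(W)$ missing $u(\mb S^3)$. Then $P_W\circ u$ is a genuine harmonic morphism to $\CP^1$, which classifies as $g\circ H\circ Q$. Lift back to $u$ using that $dP_W$ is injective on $\operatorname{im}du$ off the finite branch set of $g$. Holomorphicity of $f$ then follows from the complex-line images together with $dg=dP_W\circ df$. No separate normal-direction second-variation argument against isotropic maps is needed.
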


We also prove that the maps in Theorem \ref{thm:Class4d} are actually minimizing.
\begin{theorem}
    \label{thm:HopfMin}
    Let $n\geq1$, let $Q\colon\mb S^3\to\mb S^3$ be an isometry, and let $\iota\colon\CP^1\to\CP^n$ be a holomorphic isometric embedding. If we write
    \[
        u=\iota\circ H\circ Q,
    \]
    then $\hat u$ is minimizing in $\mb B^4$.
\end{theorem}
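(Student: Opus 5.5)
Since precomposition with the isometry $Q$ and postcomposition with the isometry $\iota$ onto its image preserve energy, we can take $Q=\Id$. For $n=1$ the claim is exactly the minimality of $\hat H$ proven in \cite{Coron1989}. So the real task is the case $n\ge 2$. There a competitor $v\in W^{1,2}(\mb B^4;\CP^n)$ with $v=\iota\circ H$ on $\mb S^3$ may leave the line $\iota(\CP^1)$, and this extra freedom could in principle lower the energy. A nearest-point projection onto $\iota(\CP^1)$ is not available globally, since it is singular on the cut locus $\CP^{n-2}$, so the plan is to use a calibration argument based on the Kähler form instead.

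Let $\omega_{\FS}$ be the Fubini--Study Kähler form, normalized so that $\iota$ is an isometry from the round $\mb S^2=\CP^1$ of area $4\pi$. For any map $v$ and any orthonormal frame, the pointwise inequality
\[
|dv(e_1)|^2+|dv(e_2)|^2\ \ge\ 2\,\bigl|v^*\omega_{\FS}(e_1,e_2)\bigr|
\]
holds, with equality exactly when $dv$ is $\pm$-holomorphic on that plane. Summing over a pair of orthogonal planes gives $|\nabla v|^2\ge 2|\langle v^*\omega_{\FS},\eta\rangle|$ for any unit-size simple decomposition. Following the Coron--Gulliver strategy, I would choose a closed (or nearly closed) $2$-form $\eta$ on $\mb B^4$ built from a self-dual form, adapted to the Hopf fibration. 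The cleanest candidate is the $0$-homogeneous extension of $H^*\omega_{\FS}$ itself, which is a multiple of the left-invariant form $d\alpha$, where $\alpha$ is the contact form of the Hopf fibration.

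The estimate then runs in three steps. First, show $\tfrac12|\nabla v|^2\ge c\, v^*\omega_{\FS}\wedge\beta$ for a fixed $2$-form $\beta$ with $|\beta|\le 1$ pointwise, using the Wirtinger-type inequality above; this is where the Kähler structure of $\CP^n$ replaces the area form of $\mb S^2$. Second, note that $v^*\omega_{\FS}$ is closed because $\omega_{\FS}$ is closed. If $d\beta$ is such that $\int_{\mb B^4} v^*\omega_{\FS}\wedge\beta$ depends only on the boundary data, then by Stokes it equals the same integral for $\hat u$. Third, check that $\hat u$ achieves equality pointwise: $\hat u$ is holomorphic on the horizontal planes of the Hopf fibration and constant along fibers and radii, which should make the inequality sharp with the correct constant. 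This gives $E(v)\ge E(\hat u)$.

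The main obstacle is the choice of $\beta$. The naive Coron--Gulliver form for $\hat H$ uses $\hat H^*\omega$ with a radial weight, and its exterior derivative does not vanish. Consequently the boundary-dependence in step two needs $v^*\omega_{\FS}$ to satisfy a topological constraint, which is automatic for target $\mb S^2$ but must be re-derived for target $\CP^n$. My first attempt would be to write $\beta=d(f(r)\,\alpha)$, which is exact so that Stokes applies cleanly, and then optimize the weight $f$ so that $|\beta|\le 1$ with equality along $\hat u$. If that fails, I would fall back on a degree argument: use that $\omega_{\FS}$ generates $H^2(\CP^n)$, which reduces the integral to a Hopf-invariant-type quantity computed purely from $v|_{\mb S^3}$.
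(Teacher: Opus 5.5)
\paragraph{Verdict: there is a genuine gap.} The calibration in your Step 2 only controls competitors without line singularities, whereas minimality is claimed against all of $W^{1,2}(\B^4;\CP^n)$.

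\paragraph{What does work.} Your first attempt yields the right form. Normalize $\alpha$ by $\alpha(i\omega)=1$ on $\mb S^3$ and take $f(r)=r^2/2$. Then $\beta=d(f(r)\alpha)=\omega_0$, the standard K\"ahler form of $\C^2\cong\R^4$. It has comass one and is exact: $\omega_0=\frac12 d\theta$ with $\theta_z(V)=\operatorname{Re}\langle V,iz\rangle$. Since $\hat u(z)=\iota([z])$ is holomorphic on $\C^2\setminus\{0\}$, the pointwise inequality $\frac12|dv|^2\ge\langle v^*\omega_\FS,\omega_0\rangle$ (equality iff $dv$ is complex-linear) is an equality for $\hat u$. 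This is Lichnerowicz's argument.

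\paragraph{Where it breaks.} In your second step, $v^*\omega_\FS$ is closed for smooth $v$, but not for a general competitor in $W^{1,2}(\B^4;\CP^n)$. Because $\pi_2(\CP^n)\cong\Z$, finite-energy maps may carry one-dimensional singular sets with nonzero degree, locally like $v(x)=\iota(x'/|x'|)$ with $x=(x',x_4)$. For such maps $d(v^*\omega_\FS)=4\pi T$ with $T$ a nonzero integral $1$-current. By Bethuel's theorem they cannot be approximated by smooth maps in $W^{1,2}$, so you cannot reduce to smooth competitors. For them Stokes only gives
\[
E(v)\ \ge\ \int_{\B^4}v^*\omega_\FS\wedge\omega_0\ =\ E(\hat u)-2\pi\langle T,\theta\rangle
\]
up to orientation conventions. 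Since $\theta$ is not closed, the error term has no sign: a small singular loop in a complex line contributes with either sign depending on its orientation. So the calibration proves $E(v)\ge E(\hat u)$ only for competitors with $d(v^*\omega_\FS)=0$, for instance the smooth extensions, which exist because $\pi_3(\CP^n)=0$.

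Two further points are wrong:
\begin{itemize}
\item Your remark that this constraint is ``automatic for target $\mb S^2$'' is incorrect. The same line singularities occur there, and they are exactly what makes the case $n=1$ nontrivial.
\item The Hopf-invariant fallback is unavailable, since $\pi_3(\CP^n)=0$ for $n\ge2$.
\end{itemize}

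\paragraph{How the paper avoids this.} The paper never proves a new lower bound for singular competitors; it reduces to $n=1$. Take $n$ odd, $J$ an antilinear map on $\C^{n+1}$ with $J^2=-\Id$ (a quaternionic structure), and $R_a$ the projection onto $\operatorname{span}(a,Ja)$. The holomorphic maps $G^a_t([z])=[R_az+t(\Id-R_a)z]$ satisfy
\[
\int_{\CP^n}E(G^a_t\circ v)\,d\mu(a)=E(v)
\]
for every $v\in W^{1,2}$, because the averaged pullback of $\omega_\FS$ is again $\omega_\FS$. For a good $a$ and $t_k\to0$, $G^a_{t_k}\circ v$ converges weakly to $w=[R_av]$. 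This limit takes values in the line $\mb P(\operatorname{span}(a,Ja))$ and has trace $[z^0R_ae_0+z^1JR_ae_0]$, which is again a Hopf map up to isometry. Lower semicontinuity and the Coron--Gulliver theorem for $\CP^1$ (where singular competitors are actually handled) then give $E(\hat u)\le E(w)\le E(v)$, with no regularity assumption on $v$.
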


We next remark that the minimality assumption is crucial in both Theorems \ref{thm:Class3d} and \ref{thm:Class4d}.

\begin{remark}[Necessity of minimality]\label{rmk:minim}
If one weakens minimality to the assumption that $\hat u$ is stable-stationary in Theorems \ref{thm:Class3d} and \ref{thm:Class4d} then there is a plethora of additional tangent maps. This is well-known in Theorem \ref{thm:Class3d} already for $n=1$, as any harmonic map $u\colon \mb S^2\to \CP^n$ is holomorphic or anti-holomorphic and hence has a stable extension; the stationarity condition imposes only a certain balancing condition, see already Propositions \ref{prop:stationarityHomogeneous} and \ref{prop:stabilitycharact}. In Theorem \ref{thm:Class4d}, also for $n=1$, all of the maps $u=\psi \circ H$, where $\psi\colon \mb S^2\to \mb S^2$ is a M\"obius map, have stable-stationary extensions. Thus, in this case, the situation is quite different from that in \ref{it:Naka}.
\end{remark}

\begin{remark}
    \label{rmk:modelmin}
    Minimality of the maps in Theorem \ref{thm:Class3d} was already established by Fujioka \cite{Fujioka1993}. Theorem \ref{thm:HopfMin} is particularly interesting when $n\geq2$: since $\pi_3(\CP^n)=0$, the boundary map $\iota\circ H$ is null-homotopic and admits a smooth extension to $\mb B^4$, whereas its $0$-homogeneous extension is a minimizer with singularities. This is similar to the situation for the equator maps $(x/|x|,0)\colon\mb B^m\to\mb S^m$ which are minimizing for $m\geq7$ \cite{JagerKaul1983}. Here the phenomenon occurs already in dimension four.
\end{remark}

The classification of minimizing tangent maps into $\CP^n$ was studied previously in \cite{Fujioka1993}. For minimality results for the Hopf map in variants of the harmonic map problem, see \cite{GuerraLamyZemas2025,Riviere1998}.

\medskip

The proofs of Theorems \ref{thm:Class3d}, \ref{thm:Class4d} and \ref{thm:HopfMin} are quite distinct, and we give here a brief description of the main ideas and techniques  that go into proving these results. 

The proof of Theorem \ref{thm:Class3d} relies fundamentally on the fact that all harmonic maps $\mb S^2\to \CP^n$ can be completely characterized \cite{Eells1983} using a combination of twistor theory and integrable system methods, available since the domain is two-dimensional. We do not intend here an overview of these methods, since the precise constructions play a very minor role in our arguments. Nonetheless, we mention the seminal work \cite{Uhlenbeck1989} where all harmonic two-spheres into the unitary group as well as into Grassmannians were constructed, and the monographs \cite{Burstall1990,Helein2001} for an exposition of this vast theory as well as further references. 

For the proof of Theorem \ref{thm:Class3d}, the relevant fact is that all harmonic maps $u\colon \mb S^2\to \CP^n$ are either holomorphic or anti-holomorphic, or else they have quite large energy (see Theorem \ref{thm:harmonicSphereDichotomyCPn}). Our argument is then based on explicitly constructing different competitors for the homogeneous extension $u$, depending on whether $u$ is non-holomorphic, holomorphic with degree at least 3, or holomorphic with degree 2, see Section \ref{sec:B3CPn}. 

The proof of Theorem \ref{thm:Class4d} is quite different. We first provide a non-trivial generalization of a recent argument due to Rivière \cite{Riviere2023} to prove that, if $u\colon \mb S^3\to \CP^n$ is smooth and $\hat u$ is stable, then $u$ is \textit{pseudo-horizontally conformal} (see Definition \ref{def:HWC} and Theorem \ref{thm:Riv}). The study of stability of harmonic maps into $\CP^n$ is somewhat related to this calculation, cf.\ \cite{Burns1989,Chen1996,Lawson1973}. A transversality argument then allows us to essentially reduce to the case $n=1$, in which case we infer that $u$ is a harmonic morphism. The theory of harmonic morphisms \cite{Baird2003}, which again relies on twistor methods, then asserts that up to isometries we have $u=f\circ H$, where $f\colon \CP^1\to \CP^n$ is holomorphic. A key difficulty is showing that $f$ is isometric, as this cannot be inferred from stability alone, cf.\ Remark \ref{rmk:minim}, but we manage to infer it from Theorem \ref{thm:Class3d} by using the constant dilation property of the Hopf map.

In the final stage of completing this manuscript the author's became aware of the recent independent work of Lara \cite{Lara2026}, which contains some results related to Theorem \ref{thm:Riv}.

Theorem \ref{thm:HopfMin} shows that the cone over the Hopf map $H$ is minimizing as a map into $\CP^n$. The minimality in the case $n=1$ was shown by Coron and Gulliver \cite{Coron1989}. The strategy of the proof for general $n$ is to study a given competitor through a family of projective deformations, such that in the weak limit the map takes values into a projective line and the Theorem can be reduced to the $n=1$ case.
 
We conclude this discussion by mentioning a possible extension of Theorem \ref{thm:Class4d}:

\begin{remark}[Cylindrical maps]\label{rmk:cylinders}
    In Theorem \ref{thm:Class4d}, the assumption that $u\colon \mb S^3\to \CP^n$ is smooth cannot be omitted: there is at least another family of minimizing tangent maps, namely the cylindrical extension of the maps in Theorem \ref{thm:Class3d}. We believe that, at least when $n=1$, this should be the only other family of minimizing tangent maps. The key difficulty is that we cannot infer that a singular map $u$ must be a harmonic morphism, for otherwise we could apply the classification of singular harmonic morphisms from \cite{Baird1991}. Indeed, as we explain in Remark \ref{rem:stabsing}, it seems impossible to apply stability considerations to study minimizing tangent maps with non-isolated singularities.
\end{remark}

\subsubsection*{Acknowledgements}

A.G. was supported by a Royal Society Newton International Fellowship. M.G. and D.Z. gratefully acknowledge the hospitality of the University of Cambridge, where part of this research was conducted. Their visits were funded through A.G.’s Royal Society Newton International Fellowship. The authors thank Tristan Rivi\`ere for his interest in this work and for discussions of his low-index classification \cite{Riv23}. D.Z. thanks his supervisor, László Székelyhidi, for his continued support and guidance.

\subsubsection*{AI disclosure}
This project began roughly one year ago without any AI assistance. The central ideas and strategies used in the proofs are due to the human authors. As the usefulness of AI tools for mathematical research increased, AI tools in particular OpenAI's ChatGPT 5.6 Sol, was incorporated in the research and used to assist with computations and to simplify some proofs. In particular, the energy computation in the proof of Proposition \ref{prop:step3} were carried out by ChatGPT 5.6 Sol, after the author's suggested to work in the prolate spheroidal coordinates and to simplify the computations using the stationarity of the map. Moreover, the simplification which relies on using the same profile function $t_\eps$ in the proofs of Propositions \ref{prop:step1}--\ref{prop:step2} is also due to the same AI model. The paper was written and checked by the human authors who take full responsibility for its correctness. 

\clearpage

\section{Preliminary results on stationary harmonic maps}
\label{sec:prelims}

\subsection{Notation and definitions}

 We recall first some basic definitions which the reader can find in \cite{Lin2008,Simon1996}. Let $(M,g)$ and $(N,h)$ be
Riemannian manifolds, assume that $N$ is compact and isometrically embedded
in $\R^L$,
and denote by $R^N$ the Riemann curvature tensor of $N$.
 
 For an open set
$U\subset M$, write
\[
E_U(u):= \frac12\int_U |d u|_g^2\,dV_g .
\]
Whenever $(e_\alpha)$ is a local orthonormal frame on $M$, we write $u_\alpha:=du(e_\alpha)$, thus
$|\nabla u|_g^2=\sum_\alpha h(u_\alpha,u_\alpha)$.  
We consider, as usual, the Sobolev space
\[
W^{1,2}(M;N):=\{u\in W^{1,2}(M;\R^L):u(x) \in N
\text{ for a.e.\ } x\in M\}.
\]
For $u\in W^{1,2}(M;N)$, its pullback tangent bundle is
\[
    u^*TN
    :=\{(x,V)\in M\times\R^L:V\in T_{u(x)}N\}.
\]
We define the spaces of sections
\begin{align*}
    \mc V(u)
    &:={\left\{\phi\in W^{1,2}(M;\R^L)\cap L^\infty(M;\R^L):
          \phi(x)\in T_{u(x)}N\text{ for a.e.\ }x\in M\right\}},\\
    \mc V_c(u)
    &:={\left\{\phi\in\mc V(u):\supp(\phi)\Subset M\right\}}.
\end{align*}
For $\phi\in\mc V(u)$, its weak covariant derivative is
$\nabla\phi:=P_u(d\phi)$, where
$P_u\colon\R^L\to T_{u(x)}N$ is the orthogonal projection.

A map $u\in W^{1,2}(M;N)$ is said to be a \textit{weakly harmonic map}
if it is a critical point of $E_M$ with respect to compactly supported
outer variations.  Equivalently, $u$ satisfies 
\begin{equation}
    \label{eq:weakHarmonicMap}
    \int_M\sum_{\alpha=1}^{\dim M}
    h\bigl(u_\alpha,\nabla_{e_\alpha}\phi\bigr)\,dV_g=0
    \qquad\text{for every }\phi\in\mc V_c(u),
\end{equation}
where $(e_\alpha)$ is any local orthonormal frame.   For
a weakly harmonic map $u$, we say that $u$ is:
\begin{enumerate}
\item \textit{minimizing on compact subdomains} if, for every open $U\Subset M$,
\[
E_U(u)\leq E_U(v)
\]
for all $v\in W^{1,2}(U;N)$ such that
$u-v\in W^{1,2}_0(U;\mb{R}^{L})$;

\item \textit{stationary} if it is a critical point of $E_M$ with respect to
compactly supported inner variations.  More precisely, if $\Phi_t^X$ denotes
the flow of $X\in C_c^\infty(TM)$, then
\begin{equation}
    \label{eq:stationarityInnerVariations}
    \left.\frac{d}{dt}\right|_{t=0}E_M(u\circ\Phi_t^X)=0
    \qquad\text{for every }X\in C_c^\infty(TM).
\end{equation}
Equivalently,  we have
\begin{equation}
    \label{eq:statM}
    0=\int_M S_{\alpha\beta}(u)\nabla^\alpha X^\beta\,dV_g
    =\int_M\left(
    \frac12|\nabla u|_g^2\,\ddiv_gX
    -h(u_\alpha,u_\beta)
      \nabla^\alpha X^\beta
    \right)dV_g
\end{equation}
where  we define the stress-energy tensor
\[
S_{\alpha\beta}(u)
:=\frac12|\nabla u|_g^2g_{\alpha\beta}
-h(u_\alpha,u_\beta);
\]

\item \textit{stable} if, for every
$\phi\in \mc V_c(u)$, we have
\begin{equation}
\label{eq:stabilitydef}
\begin{aligned}
0\leq E''_{u,M}(\phi)
&:=\int_M
\left(
|\nabla\phi|_g^2
-\sum_{\alpha=1}^{\dim M}
h\bigl(R^N(\phi,u_\alpha)u_\alpha,\phi\bigr)
\right)
dV_g\\
&=\int_M h(J_u\phi,\phi)\,dV_g,
\end{aligned}
\end{equation}
where $J_u$ is the \textit{Jacobi operator}
\begin{equation}
    \label{eq:JacobiOperator}
    J_u\phi
    :=\nabla^*\nabla\phi
      -\sum_{\alpha=1}^{\dim M}
       R^N(\phi,u_\alpha)u_\alpha.
\end{equation}
Here $\nabla^*\nabla$ is the nonnegative connection Laplacian.  At a
point where $(e_\alpha)$ is parallel,
\begin{equation}
    \label{eq:ConnectionLaplacian}
    \nabla^*\nabla\phi
    =-\sum_{\alpha=1}^{\dim M}
      \nabla_{e_\alpha}\nabla_{e_\alpha}\phi.
\end{equation}
\end{enumerate}
We also note that any minimizing map is stable and stationary \cite{Lin2008}.

In this paper we will only consider the cases where $M=\R^{m+1}$ or $M=\mb S^{m}$, and therefore we will often suppress the dependence on $g$ in the relevant expressions. Instead, we will use the standard symbols of differential operators for derivatives in $\R^{m+1}$, and subscripts to denote derivatives in $\mb S^m$.

\subsection{Stationarity and stability for tangent maps}

In this subsection we characterize stationarity and stability for tangent maps.  Recall that, for a map $u\colon \mb S^{m}\to N$, we denote its
0-homogeneous extension by $\hat u\colon \R^{m+1}\to N$. Here
$\langle\cdot,\cdot\rangle$ denotes the Euclidean inner product on
$\R^{m+1}$. It will be useful to consider, for $a\in\R^{m+1}$, the
conformal Killing fields
\begin{equation}
\label{eq:conf_killing}
f_a(x):=\langle a,x\rangle,
\qquad
X_a:=\nabla_{\mb S^m}f_a= a-\langle a,x\rangle x.
\end{equation}
It is also easy to verify the identity
\begin{equation}
\label{eq:hessianfa}
\nabla_{\mb S^m}^2f_a=-f_ag_{\mb S^m}.
\end{equation}
If $(e_k)_{k=1}^{m+1}$ is the standard basis of $\R^{m+1}$, for 
$v,w\in T_x\mb S^m$ we have
\begin{equation}
    \label{eq:ParsevalConformalFields}
    \sum_{k=1}^{m+1}
      g_{\mb S^m}(X_{e_k},v)g_{\mb S^m}(X_{e_k},w)
    =\sum_{k=1}^{m+1}\langle e_k,v\rangle\langle e_k,w\rangle
    =g_{\mb S^m}(v,w).
\end{equation}
Taking the trace of this identity gives
\begin{equation}
    \label{eq:ConformalFieldSums}
    \sum_{k=1}^{m+1}|X_{e_k}|^2=m,
    \qquad
    \sum_{k=1}^{m+1}f_{e_k}^2=|x|^2 = 1.
\end{equation}

To the best of our knowledge the following characterization of stationarity was first observed by Hardt for $m=2$, and then used in \cite{Ding2003} to construct non-stationary maps which are weak limits of stationary maps; the result for $m\geq 3$ appears to be novel, and it simplifies some of the arguments in \cite{ElSoufi1995,Struwe2020}, which are applicable only for smooth harmonic maps.

\begin{proposition}[Stationarity for tangent maps]
\label{prop:stationarityHomogeneous}
Let $m\geq2$ and let $u\in W^{1,2}(\mb S^m;N)$.   Then $\hat u$ is stationary in
$\R^{m+1}$ if and only if $u$ is stationary on $\mb S^m$ and satisfies
\begin{equation}
    \label{eq:balancingCondition}
    \int_{\mb S^m}|du|_{\mb S^m}^2\,\omega\,dV_{\mb S^m}(\omega)=0.
\end{equation}
If $m\geq3$, condition \eqref{eq:balancingCondition} follows automatically
from the stationarity of $u$.  
\end{proposition}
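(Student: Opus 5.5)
We decompose vector fields on $\R^{m+1}\setminus\{0\}$ into radial and tangential parts and compute the stress-energy tensor of $\hat u$ in polar coordinates $x=r\omega$. Since $\hat u$ is $0$-homogeneous, $\partial_r\hat u=0$ and $|\nabla\hat u|^2(r\omega)=r^{-2}|du|^2_{\mb S^m}(\omega)$, which lies in $L^1_{\rm loc}$ for $m\geq2$. Thus $S(\hat u)=r^{-2}\bigl(\tfrac12|du|^2(dr^2+r^2g_{\mb S^m})-r^2h(u_\alpha,u_\beta)\bigr)$, where $S_{rr}=\tfrac12 r^{-2}|du|^2$, the mixed components vanish, and the tangential block equals $S^{\mb S^m}(u)+\tfrac12|du|^2g_{\mb S^m}$ (in orthonormal frames, divided by $r^2$). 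The origin has zero capacity for $m+1\geq3$, so test fields $X\in C^\infty_c(\R^{m+1})$ may be approximated by fields supported away from $0$. Hence it suffices to test \eqref{eq:statM} with $X$ supported in $\R^{m+1}\setminus\{0\}$.

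First I test with purely tangential fields $X=\varphi(r)Y(\omega)$, $Y$ tangent to $\mb S^m$. Integrating in $r$ factors out, and since $\partial_r$ of the tangential part does not couple to the vanishing mixed components of $S$, the identity reduces to $c_\varphi\int_{\mb S^m}\langle S^{\mb S^m}(u)+\tfrac12|du|^2 g,\nabla Y\rangle=0$ plus possibly a term from the radial derivative. I will check this carefully; the expected outcome is $\int_{\mb S^m}\bigl(\tfrac12|du|^2\ddiv Y-h(u_\alpha,u_\beta)\nabla^\alpha Y^\beta\bigr)\,(\text{const})=0$, i.e.\ the stationarity of $u$ on $\mb S^m$. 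The extra $\tfrac12|du|^2 g$ contribution has to be absorbed correctly, and a full computation of $\ddiv_{\R^{m+1}}X$ and of the Christoffel terms in polar coordinates (with $\nabla_{\partial_r}$ of tangential fields picking up $1/r$ factors) settles the constants.

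Next I test with radial fields $X=\psi(r,\omega)\,x$. For $\psi=\psi(r)$ this gives the classical monotonicity identity, which holds automatically for homogeneous maps, since the radial energy vanishes and the scaling balance is exact. For a general $\psi(r)\eta(\omega)$ the angular derivatives of $\eta$ enter through the mixed terms, yielding $\int|du|^2\,(\text{linear combination of }\eta\text{ and }\Delta\eta)$. Combined with tangential stationarity applied to $Y=\nabla\eta$, the remaining condition should reduce to one involving only the first eigenfunctions. Concretely, I would show that stationarity of $\hat u$ is equivalent to stationarity of $u$ together with $\int_{\mb S^m}|du|^2\,\eta=0$ for $\eta=f_a$, which is \eqref{eq:balancingCondition}. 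The converse follows by decomposing an arbitrary $X$ into tangential and radial parts and reversing these identities.

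Finally, for $m\geq3$, I test the stationarity of $u$ on $\mb S^m$ with the conformal fields $X_a$ from \eqref{eq:conf_killing}. By \eqref{eq:hessianfa}, $\nabla X_a=-f_ag$, so $\ddiv X_a=-mf_a$ and
\[
0=\int_{\mb S^m}\Bigl(-\tfrac m2|du|^2f_a+f_a|du|^2\Bigr)=\Bigl(1-\tfrac m2\Bigr)\int_{\mb S^m}|du|^2f_a .
\]
For $m\neq2$ this forces $\int|du|^2 f_a=0$ for every $a$, which is \eqref{eq:balancingCondition}. The main obstacle is the bookkeeping in the polar computation, in particular isolating exactly the $f_a$-weighted condition. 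Carrying out this computation with $X=\varphi(r)\bigl(\eta(\omega)x+r\,\nabla\zeta\bigr)$ should make the dependence explicit.
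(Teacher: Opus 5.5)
\textbf{The gap: the origin cannot be discarded when $m=2$.} Your reduction to test fields supported in $\R^{m+1}\setminus\{0\}$ is where the proof breaks. Zero $2$-capacity of the origin only lets you approximate $X$ in $W^{1,2}$. But the stationarity functional $X\mapsto\int S(\hat u):\nabla X$ is not continuous in that topology, because $S(\hat u)$ is in general only $L^1$ near $0$, with $|S(\hat u)|\lesssim r^{-2}|du|^2(\omega)$.

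Take a radial cutoff $\chi_\eps$ vanishing on $B_\eps$ and equal to $1$ outside $B_{2\eps}$. The error term $\int S_{\alpha\beta}(\hat u)\,\p^\alpha\chi_\eps\, X^\beta\,dx$ has size $\eps^{-1}\cdot\eps^{-2}\cdot\eps^{m+1}=\eps^{m-2}$. It vanishes in the limit for $m\geq3$. For $m=2$, however, it converges to $\frac12\langle X(0),\int_{\mb S^2}|du|^2\omega\,dV\rangle$, which is exactly the balancing quantity.

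Moreover, fields supported away from $0$ can never produce anything beyond stationarity of $u$. The mixed components of $S(\hat u)$ vanish, so angular derivatives of $\eta$ never couple and no $\Delta\eta$ term appears. Writing $X=\alpha\,\p_r+Y$, the radial part contributes exactly $\frac12\int_{\mb S^m}|du|^2\int_0^\infty\p_r(r^{m-2}\alpha)\,dr\,dV=0$. (Also, the tangential block of $S(\hat u)$ in coordinates is exactly $S^{\mb S^m}(u)$, as your own formula shows, not $S^{\mb S^m}(u)+\frac12|du|^2g$.)

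So for $m=2$ your route would prove that $\hat u$ is stationary whenever $u$ is, which is false. A non-isometric M\"obius map $\mb S^2\to\mb S^2$ is smooth and harmonic, but its energy density is unbalanced and its cone is not stationary in $\R^3$.

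\textbf{How the paper gets the balancing condition.} The missing idea is that \eqref{eq:balancingCondition} is a point contribution at the origin, not an interior identity produced by radial test fields. The paper keeps the origin in play:
\begin{enumerate}
\item It integrates over $\R^{m+1}\setminus B_\eps$.
\item It uses the splitting $S(\hat u)=p^*S^{\mb S^m}(u)+\frac1{2r^2}|du|^2\,dr\otimes dr$ and the stationarity of $u$ to integrate by parts.
\item This leaves only the boundary term $-\frac12\eps^{m-2}\int_{\mb S^m}|du|^2\langle X(\eps\omega),\omega\rangle\,dV$.
\end{enumerate}
This term tends to $0$ when $m\geq3$ and to $-\frac12\langle X(0),\int_{\mb S^2}|du|^2\omega\,dV\rangle$ when $m=2$. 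The first eigenfunctions enter through $\langle X(0),\omega\rangle=f_{X(0)}(\omega)$.

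For $m\geq3$ your argument can be repaired by replacing the capacity claim with the direct $\eps^{m-2}$ estimate. Your last paragraph, which derives \eqref{eq:balancingCondition} from stationarity of $u$ via the conformal fields $X_a$, is correct and is the same as the paper's.
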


\begin{proof}
Let $p\colon\R^{m+1}\setminus\{0\}\to\mb S^m$ be the radial
projection.  Since
$g_{\R^{m+1}}=dr^2+r^2g_{\mb S^m}$ and $\partial_r\hat u=0$, the
stress-energy tensors are related by
\begin{equation}
    \label{eq:stressHomogeneousSplitting}
    S_{\R^{m+1}}(\hat u)
    =p^*S_{\mb S^m}(u)
      +\frac{1}{2r^2}|du|_{\mb S^m}^2\,dr\otimes dr.
\end{equation}
A direct computation in spherical coordinates shows
that $S_{\R^{m+1}}(\hat u)$ is weakly divergence-free on
$\R^{m+1}\setminus\{0\}$ if and only if
$S_{\mb S^m}(u)$ is weakly divergence-free on $\mb S^m$.  Thus stationarity
of $\hat u$ implies stationarity of $u$.

Suppose now that $u$ is stationary.  For
$X\in C_c^\infty(\R^{m+1};\R^{m+1})$, using \eqref{eq:statM}, \eqref{eq:stressHomogeneousSplitting}, and integrating by parts gives
\begin{equation}
    \label{eq:stationarityDefect}
    \int_{\R^{m+1}\setminus B_\varepsilon}
      S_{\alpha\beta}(\hat u)\,\partial^\alpha X^\beta\,dx
    =-\frac12\varepsilon^{m-2}
      \int_{\mb S^m}|du|_{\mb S^m}^2
      \langle X(\varepsilon\omega),\omega\rangle
      \,dV_{\mb S^m}(\omega).
\end{equation}
If $m\geq3$, the right-hand side tends to zero as
$\varepsilon\to0$.  If $m=2$, it tends to
\[
    -\frac12\left\langle
      X(0),
      \int_{\mb S^2}|du|_{\mb S^2}^2\,\omega\,dV_{\mb S^2}(\omega)
    \right\rangle.
\]
It follows that, for $m=2$, the extension $\hat u$ is stationary if and
only if \eqref{eq:balancingCondition} holds.

It remains to verify that, when $m\geq 3$, stationarity implies \eqref{eq:balancingCondition}.  Using $X_a$
from \eqref{eq:conf_killing} as a test vector field in
\eqref{eq:statM}, together with \eqref{eq:hessianfa}, we obtain
\[
    0
    =\int_{\mb S^m}
      S_{\alpha\beta}(u)\nabla^\alpha X_a^\beta\,dV_{\mb S^m}
    =-\int_{\mb S^m}
      f_a\,\tr_{g_{\mb S^m}}S_{\mb S^m}(u)\,dV_{\mb S^m}.
\]
Since
\[
    \tr_{g_{\mb S^m}}S_{\mb S^m}(u)
    =\frac{m-2}{2}|du|_{\mb S^m}^2
\]
and $m-2\neq0$, this implies
\[
    0=\int_{\mb S^m}f_a|du|_{\mb S^m}^2\,dV_{\mb S^m}
    =\left\langle
      a,\int_{\mb S^m}|du|_{\mb S^m}^2\,\omega
      \,dV_{\mb S^m}(\omega)
    \right\rangle.
\]
Since $a\in\R^{m+1}$ is arbitrary, \eqref{eq:balancingCondition} follows.
\end{proof}

The next lemma shows that stationary harmonic maps on $\mb S^m, m\geq 3$, are not local minima of the energy. Although this result is well-known for smooth harmonic maps \cite{Xin1980}, it does not appear to have been observed for stationary ones, and the proof below is somewhat different from the usual proofs in the smooth setting.

\begin{lemma}[Instability of stationary maps on $\mb S^m$]
\label{lem:ConformalInstability}
Let $m\geq3$ and let $u\in W^{1,2}(\mb S^m;N)$ be a nonconstant
stationary harmonic map.  Then there is a one-parameter group of
conformal diffeomorphisms
$\phi_t\colon\mb S^m\to\mb S^m$ such that
\[
    \left.\frac{d^2}{dt^2}\right|_{t=0}E_{\mb S^m}(u\circ\phi_t)<0.
\]
\end{lemma}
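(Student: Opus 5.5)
We use the conformal flows generated by the gradient fields $X_a=\nabla_{\mb S^m}f_a$ from \eqref{eq:conf_killing}. For each $a\in\R^{m+1}$ let $\phi^a_t$ be the flow of $X_a$; these are one-parameter groups of conformal diffeomorphisms of $\mb S^m$ (the standard hyperbolic translations). Because $u\circ\phi_t^a$ is an inner variation of finite energy, stationarity of $u$ gives that the first variation vanishes. The plan is to compute the second variation $Q(a):=\frac{d^2}{dt^2}\big|_{t=0}E(u\circ\phi^a_t)$ without differentiating $u$ twice. Then we sum over $a=e_k$, $k=1,\dots,m+1$, and show the sum is strictly negative. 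Some $k$ then gives the required group.

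\textbf{Computing the second variation.} Conformality gives $(\phi_t)^*g=\lambda_t^2 g$. Changing variables $y=\phi_t(x)$,
\[
E(u\circ\phi_t)=\frac12\int_{\mb S^m}|du|^2(y)\,\mu_t(y)\,dV(y),\qquad \mu_t:=\lambda_{t}^{2-m}\circ\phi_t^{-1}\cdot\lambda_t^{\,m}\circ\phi_t^{-1}\cdots
\]
More precisely, $|d(u\circ\phi_t)|^2(x)=\lambda_t(x)^2|du|^2(\phi_t x)$ and $dV(x)=\lambda_t(x)^{-m}dV(y)$. This yields the weight $\mu_t=\lambda_t^{2-m}\circ\phi_t^{-1}$, a smooth explicit function, and only $|du|^2\in L^1$ is needed. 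This is the key point making the argument valid for merely stationary $W^{1,2}$ maps: all $t$-dependence sits in an explicit weight.

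\textbf{Identifying the weight.} For the flow of $\nabla f_a$, $\dot\lambda_t/\lambda_t=\tfrac1m\ddiv X_a\circ\phi_t=-f_a\circ\phi_t$ by \eqref{eq:hessianfa}. Hence $\partial_t\mu_t|_{0}=(m-2)f_a$. For the second derivative we use $\partial_t(f_a\circ\phi_t^{-1})=-\langle\nabla f_a,X_a\rangle=-|X_a|^2$. Also $\lambda$ evaluated along $\phi_t^{-1}$ picks up a term $(m-2)^2f_a^2$. I expect
\[
\partial_t^2\mu_t|_{0}=(m-2)^2f_a^2-(m-2)|X_a|^2
\]
up to a sign check in the transport term; this bookkeeping is the main obstacle. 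The first-order term $(m-2)\int f_a|du|^2$ vanishes by Proposition \ref{prop:stationarityHomogeneous}, consistent with stationarity. Alternatively, one can simply pass from $\phi_t$ to $\phi_{-t}$ to sidestep the transport sign.

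\textbf{Summing over directions.} Summing over $a=e_k$ and using \eqref{eq:ConformalFieldSums}, $\sum f_{e_k}^2=1$ and $\sum|X_{e_k}|^2=m$, gives
\[
\sum_{k}Q(e_k)=\frac12\big((m-2)^2-m(m-2)\big)\int_{\mb S^m}|du|^2=-(m-2)\int_{\mb S^m}|du|^2<0,
\]
since $m\geq3$ and $u$ is nonconstant. If the constant in the transport step comes out differently, I would recheck it against the known smooth computation of Xin, which gives trace $-(m-2)\cdot 2E$. Hence some $Q(e_k)<0$, and $\phi_t=\phi^{e_k}_t$ works.
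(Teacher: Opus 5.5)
Your proposal is the paper's argument: you flow along $X_a$, change variables so that $E_{\mb S^m}(u\circ\phi^a_t)=\frac12\int_{\mb S^m}e^{(2-m)\sigma_t}|du|^2\,dV_{\mb S^m}$ with $\sigma_t=\log\lambda_t\circ(\phi^a_t)^{-1}$, differentiate the explicit weight twice, and sum over $a=e_k$ using \eqref{eq:ConformalFieldSums}. The formula you ``expect'' for the weight is exactly right, because the group law gives $\partial_t\sigma_t=-f_a\circ(\phi^a_t)^{-1}$, hence $\partial_t\sigma_t|_{t=0}=-f_a$ and $\partial_t^2\sigma_t|_{t=0}=|X_a|^2$; however, replacing $\phi_t$ by $\phi_{-t}$ could not have rescued a sign error there, since the second derivative at $t=0$ is invariant under time reversal and the opposite sign would make the summed quantity positive.
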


\begin{proof}
Let $\phi_t^a$ be the flow of $X_a$ defined in \eqref{eq:conf_killing}.
By \eqref{eq:hessianfa},
$\mathcal L_{X_a}g_{\mb S^m}=2\nabla_{\mb S^m}^2f_a
=-2f_ag_{\mb S^m}$.  Thus $\phi_t^a$ is
conformal, say
\[
    (\phi_t^a)^*g_{\mb S^m}=e^{2\omega_t^a}g_{\mb S^m},
    \qquad \omega_0^a=0,
\]
and differentiation with respect to $t$ gives
\[
    \partial_t\omega_t^a=-f_a\circ\phi_t^a.
\]
Consequently, if $\sigma_t^a:=\omega_t^a\circ(\phi_t^a)^{-1}$, then $\p_t \sigma_t^a = -f_a - X_a(\sigma_t^a)$ and 
\[
    \left.\partial_t\right|_{t=0}\sigma_t^a=-f_a,
    \qquad
    \left.\partial_t^2\right|_{t=0}\sigma_t^a=X_a(f_a)
    =|\nabla_{\mb S^m}f_a|^2.
\]

Since $\phi_t^a$ is conformal, changing variables in the energy, we obtain
\[
    E_{\mb S^m}(u\circ\phi_t^a)
    =\frac12\int_{\mb S^m}
      e^{(2-m)\sigma_t^a}
      |du|^2\,dV_{\mb S^m}.
\]
By stationarity \eqref{eq:stationarityInnerVariations}, the first
derivative at $t=0$ vanishes.  The preceding identities give
\[
\left.\frac{d^2}{dt^2}\right|_{t=0}E_{\mb S^m}(u\circ\phi_t^a)
{}=\frac{m-2}{2}\int_{\mb S^m}
 \left((m-2)f_a^2-|\nabla_{\mb S^m}f_a|^2\right)
 |du|^2\,dV_{\mb S^m}.
\]
Using \eqref{eq:ConformalFieldSums} and summing over $k$ gives
\[
    \sum_{k=1}^{m+1}
    \left.\frac{d^2}{dt^2}\right|_{t=0}
      E_{\mb S^m}(u\circ\phi_t^{e_k})
    =-(m-2)\int_{\mb S^m}|du|^2\,dV_{\mb S^m}<0.
\]
The last inequality follows because $u$ is nonconstant.  Hence at least
one of the conformal flows $\phi_t^{e_k}$ has strictly negative second
variation.
\end{proof}


In light of Lemma \ref{lem:ConformalInstability}, it is natural to look for a measure of instability of $u$, namely
\begin{equation}
    \label{eq:deflambdaone}
    \lambda_1(u)
    :=\inf\left\{
    \frac{E''_{u,\mb S^m}(\eta)}{\|\eta\|_{L^2(\mb S^m)}^2}:
    0\neq\eta\in\mc V(u)
    \right\}.
\end{equation}
Due to the Rayleigh quotient characterization of eigenvalues, when $u\in C^\infty(\mb S^m;N)$ the quantity $\lambda_1(u)$ is simply the smallest eigenvalue of the Jacobi operator, which in this case is a Fredholm operator, see e.g.\ \cite[Proposition 6.1]{Hardt1992a}. In this case, Lemma \ref{lem:ConformalInstability} leads to the following classical theorem:

\begin{theorem}[{\cite{Xin1980}}]\label{thm:Xin}
Let $u\in C^\infty(\mathbb S^m;N)$, $m\geq 3$, be a
non-constant harmonic map. With the notation \eqref{eq:conf_killing}, we have
\begin{equation}
\label{eq:Xin}
J_u \,du(X_a)=(2-m)du(X_a).
\end{equation}
In particular, there exists
$a\in\mathbb R^{m+1}$ such that $du(X_a)\neq 0$, and therefore
\[
\lambda_1(u)\leq 2-m .
\]
\end{theorem}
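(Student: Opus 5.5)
The plan is to prove the identity \eqref{eq:Xin} by a direct commutation computation. The consequence $\lambda_1(u)\le 2-m$ will then follow from the Rayleigh quotient characterization \eqref{eq:deflambdaone}.

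Set $\phi:=du(X_a)$, a smooth section of $u^*TN$. We compute $\nabla^*\nabla\phi$ at a point $x$ using a local orthonormal frame $(e_\alpha)$ that is parallel at $x$. Since $u$ is smooth and harmonic, $du$ is a $u^*TN$-valued $1$-form with $\tr\nabla du=0$. Write
\[
\nabla_{e_\alpha}\phi=(\nabla_{e_\alpha}du)(X_a)+du(\nabla_{e_\alpha}X_a).
\]
By \eqref{eq:hessianfa}, $\nabla_{e_\alpha}X_a=-f_ae_\alpha$. Differentiating once more and summing over $\alpha$, we obtain three contributions:
\[
\sum_\alpha(\nabla^2_{e_\alpha,e_\alpha}du)(X_a),\qquad -2f_a\sum_\alpha(\nabla_{e_\alpha}du)(e_\alpha),\qquad -\,du(\nabla f_a)=-du(X_a).
\]
The middle term vanishes because $u$ is harmonic.

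For the first term we use the Weitzenböck formula for the rough Laplacian of $du$. Since $du$ is closed and coclosed, the Bochner identity gives
\[
\sum_\alpha\nabla^2_{e_\alpha,e_\alpha}du(Y)=du(\mathrm{Ric}^{\mb S^m}Y)-\sum_\alpha R^N(du(Y),u_\alpha)u_\alpha .
\]
The sign convention here must match the one in \eqref{eq:JacobiOperator}. With $\mathrm{Ric}^{\mb S^m}=(m-1)g$, we get
\[
-\nabla^*\nabla\phi=(m-1)\phi-\sum_\alpha R^N(\phi,u_\alpha)u_\alpha-\phi .
\]
Hence $J_u\phi=\nabla^*\nabla\phi-\sum_\alpha R^N(\phi,u_\alpha)u_\alpha=-(m-2)\phi$, which is \eqref{eq:Xin}. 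The main obstacle is bookkeeping: getting the curvature sign conventions in the Weitzenböck formula consistent with \eqref{eq:stabilitydef}. I would double-check this on a model case, e.g.\ $u=\mathrm{id}_{\mb S^m}$ with $\phi=X_a$, where $J_u$ reduces to $\nabla^*\nabla-(m-1)$ acting on vector fields and $\nabla^*\nabla X_a=X_a$.

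For the final claim, note that if $du(X_a)=0$ for every $a$, then by \eqref{eq:ParsevalConformalFields} we get
\[
|du(v)|^2\le \sum_k |du(X_{e_k})|\,|du(v)|=0
\]
for all tangent $v$. Indeed, $v=\sum_k\langle X_{e_k},v\rangle X_{e_k}$, so $du(v)=\sum_k\langle X_{e_k},v\rangle\,du(X_{e_k})=0$. This would make $u$ constant, a contradiction. So some $\eta=du(X_a)\neq0$ lies in $\mc V(u)$. By \eqref{eq:Xin},
\[
E''_{u,\mb S^m}(\eta)=\int h(J_u\eta,\eta)=(2-m)\|\eta\|_{L^2}^2,
\]
and therefore $\lambda_1(u)\le 2-m$. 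This is consistent with the averaged negative second variation already found in Lemma \ref{lem:ConformalInstability}.
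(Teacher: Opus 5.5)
Your proof is correct, and it differs from what the paper offers. The paper gives no proof of Theorem \ref{thm:Xin}: it cites \cite{Xin1980} and says only that Lemma \ref{lem:ConformalInstability} ``leads to'' it.

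The route implicit in the paper runs as follows. Since $\tau(u)=0$, the acceleration term in the second variation drops out, so $\left.\frac{d^2}{dt^2}\right|_{t=0}E_{\mb S^m}(u\circ\phi_t^a)=E''_{u,\mb S^m}(du(X_a))$. Summing the formula of Lemma \ref{lem:ConformalInstability} over $a=e_k$, and using $\sum_k|du(X_{e_k})|^2=|du|^2$ from \eqref{eq:ParsevalConformalFields}, gives $\sum_k E''_{u,\mb S^m}(du(X_{e_k}))=(2-m)\sum_k\|du(X_{e_k})\|_{L^2}^2$. Averaging Rayleigh quotients then yields $\lambda_1(u)\le 2-m$. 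The conformal-flow computation needs only stationarity, but it controls only the \emph{sum} of the quadratic forms, so it never produces the pointwise eigen-equation \eqref{eq:Xin}.

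Your Bochner--Weitzenb\"ock computation proves \eqref{eq:Xin} for every $a$. That is the stronger statement, and it is what the paper actually uses later: in Theorem \ref{thm:Riv}, \eqref{eq:Xin} is applied to each $\phi^k$ separately and then composed with $I$. The cost is that you need $u$ smooth, which the statement assumes.

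Your signs are right. With the paper's convention $h(R(X,Y)Y,X)\ge0$ (cf.\ \eqref{eq:curvsimple}), the identity
\[
\sum_\alpha\nabla^2_{e_\alpha,e_\alpha}du(Y)=du(\mathrm{Ric}\,Y)-\sum_\alpha R^N(du(Y),u_\alpha)u_\alpha
\]
holds for harmonic $u$, as your check on $u=\mathrm{id}$ confirms.

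One cosmetic slip: the displayed bound $|du(v)|^2\le\sum_k|du(X_{e_k})|\,|du(v)|$ drops the factors $\langle X_{e_k},v\rangle$. The sentence after it, expanding $v=\sum_k\langle X_{e_k},v\rangle X_{e_k}$, is the correct argument and suffices.
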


The next result asserts that if $u\colon \mb S^m\to N$ is the restriction of a stable  0-homogeneous map then, although  it is unstable as a map on the sphere by Theorem \ref{thm:Xin}, nonetheless it cannot be \textit{too} unstable:

\begin{proposition}[Stability for tangent maps]\label{prop:stabilitycharact}
Let $m\geq2$ and let
$\hat u\in W^{1,2}_\tp{loc}(\R^{m+1};N)$ be a 0-homogeneous harmonic map.
If $\hat u$ is stable, then
\begin{equation}
    \label{eq:lowerboundlambda}
    - \frac{(m-1)^2}{4} \leq \lambda_1(u).
\end{equation}
Conversely, if \eqref{eq:lowerboundlambda} holds and
$\mc H^{m-1}(\sing(\hat u))=0$, then $\hat u$ is stable.
\end{proposition}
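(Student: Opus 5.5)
The plan is to separate variables in polar coordinates and reduce stability of $\hat u$ to a one-dimensional Hardy-type inequality in $r$, paired with the spherical Rayleigh quotient defining $\lambda_1(u)$.

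Write $x=r\omega$. Since $\hat u$ is $0$-homogeneous, $|\nabla\hat u|^2(r\omega)=r^{-2}|du|^2(\omega)$. Hence for $\phi\in\mc V_c(\hat u)$ the second variation splits as
\[
E''_{\hat u,\R^{m+1}}(\phi)=\int_0^\infty\!\!\int_{\mb S^m}\Bigl(|\partial_r\phi|^2+r^{-2}\bigl(|\nabla_{\mb S^m}\phi|^2-\textstyle\sum_\alpha h(R^N(\phi,u_\alpha)u_\alpha,\phi)\bigr)\Bigr)r^m\,dV\,dr .
\]
Here $\partial_r\phi$ is already tangent, because the tangent space $T_{\hat u}N$ does not depend on $r$. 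In this formula $u_\alpha$ is taken with respect to an orthonormal frame on $\mb S^m$. The inner bracket, integrated over $\mb S^m$ for fixed $r$, is exactly $E''_{u,\mb S^m}(\phi(r,\cdot))$.

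\emph{Necessity.} Assume $\hat u$ is stable. Fix $\eta\in\mc V(u)$ and test with $\phi(r\omega)=\psi(r)\eta(\omega)$, where $\psi\in C_c^\infty(0,\infty)$. This is an admissible section, compactly supported away from the origin, and lies in $W^{1,2}\cap L^\infty$. We get
\[
0\le\|\eta\|_{L^2}^2\int_0^\infty\psi'^2r^m\,dr+E''_{u,\mb S^m}(\eta)\int_0^\infty\psi^2r^{m-2}\,dr .
\]
The sharp Hardy constant gives
\[
\inf_\psi\frac{\int\psi'^2r^m\,dr}{\int\psi^2r^{m-2}\,dr}=\frac{(m-1)^2}{4}.
\]
This is approached by $\psi=r^{-(m-1)/2}$ suitably cut off, for instance with logarithmic cutoffs in $\log r$. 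Dividing by $\int\psi^2r^{m-2}$ and passing to the infimum yields $E''_{u,\mb S^m}(\eta)\ge-\frac{(m-1)^2}{4}\|\eta\|^2_{L^2}$, which is \eqref{eq:lowerboundlambda}.

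\emph{Sufficiency.} Assume \eqref{eq:lowerboundlambda} holds. For $\phi$ supported in $\R^{m+1}\setminus\{0\}$, apply the definition of $\lambda_1(u)$ slice by slice, then the Hardy inequality to the function $r\mapsto\phi(r\omega)$ for each fixed $\omega$. Using $|\partial_r|\phi||\le|\partial_r\phi|$, this gives
\[
E''(\phi)\ge\int\Bigl(|\partial_r\phi|^2-\tfrac{(m-1)^2}{4}r^{-2}|\phi|^2\Bigr)dx\ge0 .
\]
For a general $\phi\in\mc V_c(\hat u)$, the support may meet $\sing(\hat u)$, which is a cone and contains $0$. I plan to show that sections vanishing near $\sing(\hat u)$ are dense in $\mc V_c(\hat u)$ for the $E''$-norm, using the hypothesis $\mc H^{m-1}(\sing\hat u)=0$. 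Concretely, multiply $\phi$ by cutoffs $1-\zeta_k$, where $\zeta_k$ equals $1$ near $\sing(\hat u)$ and $\|\nabla\zeta_k\|_{L^2}\to0$. Such cutoffs exist because a set of zero $\mc H^{m-1}$ measure in $\R^{m+1}$ has zero $2$-capacity, since $m-1=(m+1)-2$. Boundedness of $\phi$ controls the term $\phi\nabla\zeta_k$. The curvature term converges by dominated convergence, since $|\nabla\hat u|^2\in L^1_{\tp{loc}}$ and $\phi\in L^\infty$.

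On the complement of $\sing(\hat u)$, the map is smooth and the cone structure lets the slicing argument apply. There is one subtlety: each slice $\phi(r,\cdot)$ must lie in $\mc V(u)$, which holds for a.e.\ $r$ by Fubini. I expect this density/capacity step to be the main obstacle. It is exactly where the hypothesis on $\sing(\hat u)$ enters, and the truncation must be compatible with the tangency constraint. That compatibility is fine, because multiplying by a scalar cutoff preserves tangency.
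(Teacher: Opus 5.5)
Your proposal is correct and follows the paper's proof essentially step for step. It uses the same polar splitting of $E''_{\hat u,\R^{m+1}}$ and the same separated test fields $\psi(r)\eta(\omega)$ with the sharp radial Hardy constant for necessity. For sufficiency it applies the same slice-wise bound by $\lambda_1(u)$ followed by Hardy, then removes the support restriction with zero-$2$-capacity cutoffs; applying Hardy to $|\phi|$ rather than componentwise is immaterial. One small point in the truncation: you should state explicitly that $\zeta_k\to0$ almost everywhere, as the paper does with $\chi_j\to1$. This is what gives $(1-\zeta_k)\nabla\phi\to\nabla\phi$ in $L^2$, and it follows from your setup via Sobolev's inequality along a subsequence.
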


The necessity of \eqref{eq:lowerboundlambda} for stability is well-known, see e.g.\ \cite{Schoen1984} or \cite[Lemma 4.2]{Hardt1992a}, but for the reader's convenience we present a complete proof.

\begin{proof}
Write $x=r\omega$ in spherical coordinates. Since $\hat u$ is 0-homogeneous, we have
\begin{align}
    E''_{\hat u,\R^{m+1}}(\phi)
     =\int_0^\infty r^m
       \|\partial_r\phi(r,\cdot)\|_{L^2(\mb S^m)}^2\,dr
    +\int_0^\infty r^{m-2}
       E''_{u,\mb S^m}(\phi(r,\cdot))\,dr.        \label{eq:sphericalJacobi}
\end{align}

Suppose first that $\hat u$ is stable and consider 
\begin{equation*}
    \phi(r\omega)=\psi(r)\eta(\omega),
\end{equation*}
where $\psi\in C_c^\infty((0,\infty))$ and
$\eta\in\mc V(u)$.  Substituting this into
\eqref{eq:sphericalJacobi} gives
\begin{equation}
    \label{eq:separatedJacobi}
    E''_{\hat u,\R^{m+1}}(\phi)
    =\left(\int_0^\infty |\psi'|^2r^m\,dr\right)
       \|\eta\|_{L^2(\mb S^m)}^2
     +\left(\int_0^\infty \psi^2r^{m-2}\,dr\right)
       E''_{u,\mb S^m}(\eta).
\end{equation}
The sharp Hardy inequality gives
\begin{equation}
    \label{eq:radialHardy}
    \inf_{0\neq\psi\in C_c^\infty((0,\infty))}
    \frac{\int_0^\infty |\psi'|^2r^m\,dr}
         {\int_0^\infty \psi^2r^{m-2}\,dr}
    =\frac{(m-1)^2}{4};
\end{equation}
see \cite[Lemma 1.3]{Schoen1984}.  Dividing
\eqref{eq:separatedJacobi} by the second radial integral, optimizing in
$\psi$, and then taking the infimum over $\eta$ proves
\eqref{eq:lowerboundlambda}.

Conversely, assume
$\lambda_1(u)\geq-(m-1)^2/4$.  First take
\[
    \phi\in\mc V_c(\hat u),
    \qquad
    \supp(\phi)\Subset\R^{m+1}\setminus\sing(\hat u).
\]
For almost every $r$, the section $\phi(r,\cdot)$ belongs to $\mc V(u)$ and is therefore an admissible test field in
\eqref{eq:deflambdaone}.  Consequently,
\begin{equation*}
    E''_{\hat u,\R^{m+1}}(\phi)
    \geq
    \int_0^\infty r^m
       \|\partial_r\phi(r,\cdot)\|_{L^2}^2\,dr
    -\frac{(m-1)^2}{4}\int_0^\infty r^{m-2}
       \|\phi(r,\cdot)\|_{L^2}^2\,dr .
\end{equation*}
Applying \eqref{eq:radialHardy} to each Euclidean
component of $\phi(\cdot,\omega)\in\R^L$,
summing over the components and integrating over $\mb S^m$ gives
\[
    \int_0^\infty r^m
       \|\partial_r\phi(r,\cdot)\|_{L^2}^2\,dr
    \geq \frac{(m-1)^2}{4}\int_0^\infty r^{m-2}
       \|\phi(r,\cdot)\|_{L^2}^2\,dr.
\]
This proves the stability
inequality for fields supported away from $\sing(\hat u)$.

It remains to remove this restriction.  Let $\phi\in\mc V_c(\hat u)$.
Since $\mc H^{m-1}(\sing(\hat u))=0$, $\sing(\hat u)$ has zero 2-capacity \cite{Evans2015}, hence there are smooth cutoffs
$0\leq\chi_j\leq1$, vanishing on
a neighbourhood of $\sing(\hat u)\cap\supp(\phi)$, such that $\chi_j\to1$
almost everywhere and $\|\nabla\chi_j\|_{L^2}\to0$ on a fixed
neighbourhood of $\supp(\phi)$.  In particular,
\[
    \chi_j\phi\in\mc V_c(\hat u),
    \qquad
    \supp(\chi_j\phi)\Subset\R^{m+1}\setminus\sing(\hat u),
\]
so the preceding argument gives
$E''_{\hat u,\R^{m+1}}(\chi_j\phi)\geq0$.  Since $\phi$ is bounded,
$\chi_j\phi\to\phi$ in $W^{1,2}$.  Since $N$ is compact, dominated
convergence also gives
\[
\sum_\alpha
h\bigl(R^N(\chi_j\phi,\hat u_\alpha)\hat u_\alpha,\chi_j\phi\bigr)
\longrightarrow
\sum_\alpha
h\bigl(R^N(\phi,\hat u_\alpha)\hat u_\alpha,\phi\bigr)
\quad\text{in }L^1.
\]
Passing to the limit proves stability.
\end{proof}

We conclude this section with some remarks on the meaning of $\lambda_1(u)$ in the singular case:
\begin{remark}[Stability for singular maps]\label{rem:stabsing} 
Let us take $m\geq 3$. If $\hat u\in W^{1,2}_\tp{loc}(\R^{m+1};N)$ is minimizing, then
from \cite{SchoenUhlenbeck1982} we have $\dim_H(\sing\hat u)\leq m-2$. In particular Proposition \ref{prop:stabilitycharact} applies. However, the meaning of $\lambda_1(u)$ is hard to ascertain. 

Firstly, in general, it is not clear if $J_u$ is a Fredholm operator. Indeed, for a general weight $w\in L^1(\mb S^m)$, the embedding $W^{1,2}(\mb S^m)\subset L^2(w)$ is compact if $w\in L^{m/2}(M)$ \cite[Example 3.19]{Girouard2021}. For the Jacobi operator the relevant weight is $w=|d u|^2$, which is in $L^{m}$ if and only if $u$ is smooth \cite[Theorem I.2]{Bethuel1993}.

Secondly, since $\hat u$ is minimizing, by Proposition
\ref{prop:stationarityHomogeneous} we necessarily have that $u$ is
stationary and, therefore, by Lemma \ref{lem:ConformalInstability}, it is
energy-unstable.  Nonetheless, it can happen that $\lambda_1(u)\geq0$.
Indeed, write $x=(x',x_4)\in\R^3\times\R$ and consider the cylindrical minimizing tangent map
\[
    \hat u_*(x',x_4):=\frac{x'}{|x'|}\colon\mb B^4\to\mb S^2,
    \qquad
    u_*:=\hat u_*|_{\mb S^3}.
\]
Use coordinates $(\theta,\omega)\in(0,\pi)\times\mb S^2$ on
$\mb S^3$ given by
\[
    x'=\sin\theta\,\omega,
    \qquad x_4=\cos\theta.
\]
Then $u_*(\theta,\omega)=\omega$ and
$g_{\mb S^3}=d\theta^2+\sin^2\theta\,g_{\mb S^2}$.  Since the round
$\mb S^2$ has sectional curvature one, for every
$\eta\in\mc V(u_*)$ the second variation is
\begin{equation}
    \label{eq:cylindricalJacobiForm}
    E''_{u_*,\mb S^3}(\eta)
    =\int_0^\pi\int_{\mb S^2}
      \left(
        \sin^2\theta\,|\nabla_{\partial_\theta}\eta|^2
        +|\nabla_{\mb S^2}\eta|^2-|\eta|^2
      \right)dV_{\mb S^2}\,d\theta.
\end{equation}
Recall the sharp Poincaré inequality for tangent vector fields on $\mb S^2$,
\[
    \int_{\mb S^2}|\nabla_{\mb S^2}Y|^2\,dV_{\mb S^2}
    \geq\int_{\mb S^2}|Y|^2\,dV_{\mb S^2},
\]
and so we see that \eqref{eq:cylindricalJacobiForm} is nonnegative,
hence $\lambda_1(u_*)\geq0$.  On the other hand, if $Y$ is a 
Killing field on $\mb S^2$ and we set $\eta(\theta,\omega):=Y(\omega)$ then $E''_{u_*,\mb S^3}(\eta)=0$.  We conclude that
\[
    \lambda_1(u_*)=0.
\]

The above example suggests that, to study tangent maps with non-isolated singularities, stability is not a useful condition. This situation should be contrasted with that of singular minimal hypersurfaces, where the singular set has much higher codimension \cite{Zhu2018}.
\end{remark}

\clearpage

\section{The geometry of \texorpdfstring{$\CP^n$}{CPn}}

\subsection{Notation and definitions}

We begin by fixing some conventions that will be used
throughout the paper.  The complex projective space is the quotient
\[
    \CP^n=(\C^{n+1}\setminus\{0\})/\C^*,
\]
where $\C^*$ acts by scalar multiplication.  We write
$[z^0:\cdots:z^n]$ for the equivalence class of
$z\in\C^{n+1}\setminus\{0\}$; these are the homogeneous coordinates of a
point.  Let
\[
    U_j:=\{[z^0:\cdots:z^n]\in\CP^n:z^j\neq0\}, \qquad j=0,\dots, n.
\]
On $U_j$ the ratios $z^\alpha/z^j$, with $\alpha\neq j$, are holomorphic
coordinates, and on $U_0$ we write
\begin{equation}
    \label{eq:AffineCoordinatesCPn}
    w^\alpha=\frac{z^\alpha}{z^0},\qquad \alpha=1,\ldots,n,
    \qquad
    [z^0:\cdots:z^n]=[1:w^1:\cdots:w^n].
\end{equation}
The standard action of $\tp{U}(n+1)$ on $\C^{n+1}$ descends to
a transitive, holomorphic action on $\CP^n$, which moreover is isometric for the
Fubini--Study metric, which we now define.  In the coordinates $w=(w^1,\ldots,w^n)$ on $U_0$, the
function
\begin{equation}
    \label{eq:FubiniStudyPotential}
    \rho_\FS(w):=2\log(1+|w|^2)
\end{equation}
is a K\"ahler potential.  The corresponding Hermitian
form is
\begin{equation}
    \label{eq:FubiniStudyCoordinates}
    H_\FS
    =2\frac{\partial^2\rho_\FS}
        {\partial w^\alpha\partial\overline w^\beta}
      \,dw^\alpha\otimes d\overline w^\beta
    =4\frac{(1+|w|^2)\delta_{\alpha\beta}
        -\overline w^\alpha w^\beta}
        {(1+|w|^2)^2}
      \,dw^\alpha\otimes d\overline w^\beta,
\end{equation}
where as usual repeated indices  are summed.
The real part $h_\FS:=\Re H_\FS$ is the Fubini--Study Riemannian metric.
This, together with the standard complex structure $I$ inherited from $\C^n$,  makes $\CP^n$ a
K\"ahler manifold, with K\"ahler form 
\begin{equation}
    \label{eq:kahler}
    \omega_\FS(X,Y) =h_\FS(I X,Y),
    \qquad
    \omega_\FS:=i\,\partial\bar\partial\rho_\FS.
\end{equation}

We emphasize the presence of the factor $4$ in \eqref{eq:FubiniStudyCoordinates}: this normalization is so that 
the holomorphic sectional curvature is $1$, and every projective line $\CP^1\subset \CP^n$ with
the induced metric is a round two-sphere of curvature $1$ and area $4\pi$.
This normalization is also natural in view of the Hopf map, which  is the map
\[
    H_n\colon\mb S^{2n+1}\longrightarrow\CP^n,
    \qquad
    (z^0, \dots, z^n)\mapsto [z^0:\cdots:z^n].
\]
We usually suppress the subscript and write simply $H$.  The fibres of $H_n$ are circles, i.e.\
$    H^{-1}(H(z))=\{e^{i\theta}z:\theta\in\R\}.$
Write $\langle\cdot,\cdot\rangle$ for the standard Hermitian inner
product on $\C^{n+1}$.  The round metric is given by
\begin{equation}
    \label{eq:RoundMetricHermitianProduct}
    g_{\mb S^{2n+1}}
    =\left.\operatorname{Re}\langle\cdot,\cdot\rangle
      \right|_{T\mb S^{2n+1}}.
\end{equation}
The \textit{vertical} and \textit{horizontal spaces} of $dH_z$ are
\begin{equation}
    \label{eq:HopfVerticalHorizontal}
    \begin{aligned}
        \mc V_z&:=\ker dH_z=\R iz,\\
        \mc H_z&:=(\ker dH_z)^\perp=(\C z)^\perp,
    \end{aligned}
\end{equation}
where the orthogonal complement is taken with respect to $\langle\cdot, \cdot\rangle$.
Thus $\mc H_z$ is the orthogonal complement of $\mc V_z$ inside
$T_z\mb S^{2n+1}$ with respect to the round metric.  In particular,
if $V\in T_z\mb S^{2n+1}$, then its horizontal component is
\begin{equation}
    \label{eq:HopfHorizontalProjection}
    V^{\mathrm{hor}}=V-\langle V,z\rangle z.
\end{equation}
For every horizontal vector $V$,
\begin{equation}
    \label{eq:HopfDilation}
    |dH_z(V)|_{h_\FS}=2|V|.
\end{equation}
Thus, in this normalization, the Hopf map is horizontally conformal with constant dilation $2$, and this will play an important role in the proof of Lemma \ref{lem:ProjectionEnergy}.

When no confusion can arise, we drop the subscript $\FS$ from $h$ and
$\omega$.  With the curvature convention used in Section~2, the
curvature tensor of $h_\FS$ is
$$
R(X,Y)Z= \frac 1 4\left(h(Y,Z)X-h(X,Z)Y+ h(IY,Z) IX - h(IX,Z)IY + 2 h(X,IY)IZ\right),
$$
cf.\ \cite[(1)]{Chen1996},
and so, taking $Z=Y$ and using that $h(I\cdot,I\cdot)=h$, we have
\begin{equation}
\label{eq:curvsimple}
R(X,Y)Y=\frac 1 4 \left(|Y|_h^2 X-h(X,Y) Y + 3 h(X,IY) IY\right).
\end{equation}
This identity will play an important role in Section \ref{sec:B4CPn}.

With our normalization, $[\omega_\FS/(4\pi)]$ is the positive generator of
$H^2(\CP^n;\Z)\cong \Z$.  For every map $u\in C^\infty(\CP^1;\CP^n)$, we define its
\emph{degree} $d(u)\in\Z$ by
\[
    u^*\left[\frac{\omega_\FS}{4\pi}\right]
    =d(u)\left[\frac{\omega_\FS}{4\pi}\right]
    \quad\text{in }H^2(\CP^1;\Z),
\]
where the form on the right is the Fubini--Study form of $\CP^1$.
Equivalently,
\begin{equation}
    \label{eq:degreeCPn}
    d(u)=\frac{1}{4\pi}\int_{\CP^1}u^*\omega_\FS\in\Z.
\end{equation}
By the Hurewicz Theorem,  $d(u)$ also
represents the homotopy class $[u]\in\pi_2(\CP^n)$.

Let $j$ and $I$ denote the complex structures of $\CP^1$ and $\CP^n$,
respectively.  A smooth map $u\colon\CP^1\to\CP^n$ is \emph{holomorphic} if, at all points,
$    du_x\circ j=I\circ du_x$, and \emph{anti-holomorphic} if $du_x\circ j=-I\circ du_x$.
Equivalently, in holomorphic coordinate charts on the domain and target, the
coordinate representation of a holomorphic map is holomorphic.  For a
holomorphic or anti-holomorphic map, 
\eqref{eq:kahler} and \eqref{eq:degreeCPn} give
\begin{equation}
    \label{eq:energyDegreeHolomorphic}
    E(u)=4\pi|d(u)|.
\end{equation}
The next well-known result gives an explicit representation of holomorphic maps, see e.g.\ \cite{Huybrechts2005}. For completeness, and to bring together the above definitions, we give an elementary proof.

\begin{lemma}
    \label{lem:holomorphicHomogeneousRepresentation}
Let $u\colon\CP^1\to\CP^n$ be holomorphic.  Then there are homogeneous
polynomials $P_0,\ldots,P_n\in\C[z_0,z_1]$, all of degree $d(u)$ and with no
common zero in $\C^2\setminus\{0\}$, such that
\begin{equation}
    \label{eq:holomorphicHomogeneousRepresentation}
    u([z_0:z_1])=[P_0(z_0,z_1):\cdots:P_n(z_0,z_1)].
\end{equation}
In particular, the $P_j$ have no nonconstant common factor.  The tuple
$(P_0,\ldots,P_n)$ is unique up to multiplication by a common nonzero
constant.  Conversely, any such tuple of polynomials defines a holomorphic map $u\colon\CP^1\to\CP^n$ with degree $d(u)$.
\end{lemma}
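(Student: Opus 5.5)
The plan is to work in affine charts, clear denominators to produce polynomials, and then compute the degree by integrating $u^*\omega_\FS$.

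\emph{Construction of the polynomials.} Choose homogeneous coordinates on $\CP^n$ so that $u([1:0])\in U_0$; this is possible after applying an element of $\tp{U}(n+1)$, which acts holomorphically and isometrically. The preimage $u^{-1}(\CP^n\setminus U_0)$ is the zero set of a holomorphic section, so it is finite unless $u(\CP^1)$ lies in a hyperplane. In that case we induct on $n$, or simply choose the hyperplane generically so that it does not contain $u(\CP^1)$. On the affine chart $t=z_1/z_0$ of $\CP^1$, the functions $w^\alpha\circ u(t)$ are therefore meromorphic on $\C\cup\{\infty\}$, hence rational functions of $t$. Write them as $w^\alpha=p_\alpha(t)/q(t)$ over a common denominator $q$, with $\gcd(q,p_1,\dots,p_n)=1$. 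Let $D:=\max(\deg q,\deg p_\alpha)$ and homogenize: $P_0:=z_0^D q(z_1/z_0)$ and $P_\alpha:=z_0^D p_\alpha(z_1/z_0)$. These are homogeneous of degree $D$.

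\emph{No common zeros and uniqueness.} On $z_0\neq0$, a common zero of the $P_j$ would give a common root of $q$ and the $p_\alpha$, contradicting the gcd condition. On $z_0=0$, the leading coefficient of at least one polynomial of degree $D$ survives, so the $P_j$ do not all vanish at $[0:1]$. Hence the $P_j$ have no common zero. For uniqueness, suppose $(P_j)$ and $(P'_j)$ both represent $u$. Then $P_iP'_j=P_jP'_i$ for all $i,j$. Since the $P_j$ share no common factor, unique factorization gives $P_j\mid P'_j\cdot(\text{unit})$, and the equal degrees force $P'=cP$. The converse direction is that a tuple without common zeros gives a well-defined map that is holomorphic in every chart, which is immediate.

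\emph{Computing the degree.} It remains to show $D=d(u)$, which I expect to be the main step. Pull back the Fubini--Study potential: on the set where $P_0\neq0$ we have $u^*\omega_\FS=i\partial\bar\partial\, 2\log(|P|^2/|P_0|^2)$, where $|P|^2=\sum_j|P_j|^2$. Since $\log|P_0|^2$ is pluriharmonic away from its zeros, on $\C_t$ this equals $2i\partial\bar\partial\log|P(1,t)|^2$ as a smooth form; the zeros of $P_0$ are harmless because $|P|^2>0$ everywhere. Integrate over $\C$ by Stokes on a large disc $|t|<R$. The integral reduces to $2\cdot 2\pi$ times the boundary winding term $\frac{1}{2\pi}\oint r\,\partial_r\log|P(1,t)|\,d\theta$ up to normalization. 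Since $|P(1,t)|^2\sim c|t|^{2D}$ as $|t|\to\infty$, this term tends to $D$. Equivalently, compare directly with the model map $[z_0:z_1]\mapsto[z_0^D:z_1^D]$ (or with $\CP^1$ itself, where $D=1$ and the integral is the area $4\pi$), so that the constants are fixed by the normalization and give $\int u^*\omega_\FS=4\pi D$. By \eqref{eq:degreeCPn} this yields $d(u)=D$. Since the same computation applies to any admissible tuple, the converse statement about the degree follows as well.
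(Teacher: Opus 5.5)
Your proposal is correct and follows essentially the same route as the paper. The shared steps are: the affine coordinate functions are rational; you clear denominators and homogenize to degree $D=\max\deg$; and you identify $D=d(u)$ via $u^*\omega_\FS=\Delta\log|P(1,t)|^2\,dx\wedge dy$ and the divergence theorem on large discs. Your constants $2\cdot 2\pi\cdot D=4\pi D$ are already right, so the ``up to normalization'' hedge is unnecessary, and your uniqueness argument via $P_iP'_j=P_jP'_i$ and unique factorization is a small addition that the paper leaves implicit.
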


\begin{proof}
The converse statement is clear, so let us prove the first statement, assuming 
that $u$ is nonconstant.  To distinguish the domain and target coordinates,
write $[z_0:z_1]$ on $\CP^1$ and $[Z_0:\cdots:Z_n]$ on $\CP^n$.  Choose the
target coordinates so that the image of $u$ is not contained in the
hyperplane $\{Z_0=0\}$.  Then, away from the finite set $u^{-1}(\{Z_0=0\})$, we can write
\[
    u=[1:f_1:\cdots:f_n]
\]
where the functions $f_j$ are holomorphic; in particular, they are
meromorphic functions on $\CP^1$, hence they are rational.  On the domain
chart
$U_0=\{[z_0:z_1]\in\CP^1:z_0\neq0\}$, write $[z_0:z_1]=[1:w]$ and
choose a common denominator $q$, so that
\[
    f_j(w)=\frac{p_j(w)}{q(w)},
    \qquad j=1,\ldots,n,
\]
for some suitable polynomials $p_j$; we also set $p_0:=q$. After dividing $p_0,\ldots,p_n$ by their greatest common divisor, we can assume that they have no common factors.  
Set $d:=\max_j\deg p_j$ and
\[
    P_j(z_0,z_1):=z_0^d p_j(z_1/z_0),
    \qquad j=0,\ldots,n.
\]
These are homogeneous polynomials of degree $d$ with no common zero
when $z_0\neq0$, because the $p_j$ have no common factor, and they have no
common zero at $[0:1]$, because at least one $p_j$ has degree $d$.
Consequently they have no common zero on $\C^2\setminus\{0\}$ and satisfy
\eqref{eq:holomorphicHomogeneousRepresentation}.

It remains to identify $d$.  On $U_0$, with
$w=z_1/z_0$, we have $p_j(w)=P_j(1,w)$. On $\{p_k\neq0\}$, using the coordinates $\zeta^j=p_j/p_k$ in the target $U_k$,  by \eqref{eq:FubiniStudyPotential}  and \eqref{eq:kahler},  we have
$$u^*\omega_\FS = 2 i \p \bar \p \log \bigg(1+\sum_{j\neq k} |\zeta^j|^2\bigg)= \Delta \log \bigg(1+\sum_{j\neq k} |\zeta^j|^2\bigg) d x \wedge d y,$$ 
where in the last equality we used that
$2i \p \bar \p f =2i  f_{w \bar w} \,d w \wedge d \bar w = \Delta f \, d x \wedge d y$ for any $f$, where $w=x+i y$.
Since $u$ is holomorphic, its pullback commutes with $\p \bar \p$, and as $p_j$ is holomorphic we find
\[
    u^*\omega_\FS
    =\Delta 
      \log\left(\frac{F}{|p_k|^2}\right) d x \wedge d y
    =\Delta \log F \, d x \wedge d y, \qquad F:=\sum_j|p_j|^2.
\]
By continuity this identity holds on all of $\C$.  By the divergence theorem,
\[
    \int_{|w|<R}u^*\omega_\FS
    =\int_{|w|=R}\partial_r\log F\,ds.
\]
Writing $p_j(w)=a_jw^d+O(|w|^{d-1})$, with at least one $a_j\neq0$, a simple calculation shows that 
\[
    \partial_r\log F(Re^{i\theta})
    =\frac{2d}{R}+O(R^{-2}),
\]
uniformly in $\theta$.
Letting $R\to\infty$ gives
$\int_{\CP^1}u^*\omega_\FS=4\pi d$, and hence $d(u)=d$ by
\eqref{eq:degreeCPn}.
\end{proof}

\subsection{An averaging lemma}

In Section \ref{sec:B3CPn}, we will construct competitors which are constant in a small ball $B_\e$ and, on each sphere
$\partial B_r$ for $r>\e$, are obtained by projective transformations of a
given map $u$; these transformations must converge to a constant as
$r\to\e$. See already \eqref{eq:competitorCP2}.

Our construction is inspired by the one in  \cite[Theorem 7.4]{Brezis1986}, which deals with the case $n=1$, although it will be substantially more involved. For $n=1$, the construction does not rely on the whole conformal
group of $\CP^1$, but instead considers only a particular class of M\"obius
transformations, corresponding to dilations.  Such transformations can be
written efficiently as
\begin{equation}
\label{eq:Fta}
    F_t^a([z]):=[P_az+tP_a^\perp z], \qquad 0<t\leq 1,
\end{equation}
where, for $a\in\CP^1$ corresponding to a line in $\C^2$, $P_a$ denotes the
Hermitian projection onto that line.  For instance, if $a=[1:0]$, then
$F_t^a([z_0:z_1])=[z_0:tz_1]$. Note that $F_1^a$ is the identity and
$F_t^a(p)\to a$, $t\rightarrow 0$ for $p \notin \mb P(a^\perp)$.

In $\CP^n$ we define exactly the same family of transformations
\eqref{eq:Fta}, now with $a\in\CP^n$ and $z\in\C^{n+1}$.  They are
biholomorphic projective-linear maps.  There is however a key distinction between $\CP^1$ and $\CP^n$ for $n>1$: in the former case all harmonic maps $\CP^1\to \CP^1$ are (anti-)holomorphic, but this is not the case for maps $\CP^1\to \CP^n, n>1$, see already Theorem \ref{thm:harmonicSphereDichotomyCPn}. For (anti-)holomorphic maps $u$, the energy of $F_t^a\circ u$ is independent of $t$ and is determined by  $d(u)$, cf.\ \eqref{eq:energyDegreeHolomorphic}, but for a general map the energy of $F_t^a \circ u$ depends on $t$.

Thus the calculations in the $\CP^n$ case are necessarily more involved.
Nonetheless, we succeed in obtaining a
tractable quantity by averaging among all possible dilation directions $a$; this natural idea was also used, to some extent, in \cite{Brezis1986}; see also \cite[Lemma 1.2]{Coron1989} for a somewhat different but related idea.
To this effect, let $d\mu$ be the normalized $\tp{U}(n+1)$-invariant
probability measure on $\CP^n$, and let $dm$ be the normalized Haar measure on
$\tp{U}(n+1)$.  For every integrable
$f\colon\CP^n\to\R$ and every $p\in\CP^n$,
\begin{equation}
\label{eq:Unavg}
    \int_{\tp{U}(n+1)}f(gp)\,dm(g)
    =\int_{\CP^n}f(a)\,d\mu(a).
\end{equation}
Indeed, the left-hand side is independent of $p$, and hence equals its
$\mu$-average over $p$; then one changes variables $p\mapsto gp$ and uses the
$\tp{U}(n+1)$-invariance of $\mu$.

We can now state the main technical result of this subsection:

\begin{lemma}
\label{lem:averageFt}
For every $0<t\leq1$, every $p\in\CP^n$, and every $X\in T_p\CP^n$, we have
\begin{align}
\label{eq:avg_ang}
    \int_{\CP^n}|dF_t^a(X)|_h^2\,d\mu(a)&=|X|_h^2,\\
\label{eq:avg_rad}
    \int_{\CP^n}|\partial_tF_t^a(p)|_h^2\,d\mu(a)&=4A_n(t),
\end{align}
where
\begin{equation}
\label{eq:defAn}
    A_n(t):=n\int_0^1
    \frac{\tau(1-\tau)^n}
    {\bigl(\tau+t^2(1-\tau)\bigr)^2}\,d\tau.
\end{equation}
Here $|\cdot|_h$ denotes the norm induced by the Fubini--Study metric
$h$.
\end{lemma}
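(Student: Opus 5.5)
The plan is to use the $\tp{U}(n+1)$-invariance to fix the base point, and then compute both sides in terms of a single scalar, the overlap $\tau=|\langle z,a\rangle|^2$. Since $gF_t^ag^{-1}=F_t^{ga}$ for $g\in\tp{U}(n+1)$, and both $h$ and $\mu$ are invariant, the left-hand sides of \eqref{eq:avg_ang} and \eqref{eq:avg_rad} do not depend on $p$. We may therefore fix $p=[z]$ with $|z|=1$.

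We represent tangent vectors at $[w]$ through the Hopf map. If $|w|$ is arbitrary, then by \eqref{eq:HopfDilation} and \eqref{eq:HopfHorizontalProjection} the vector $dH(v)$ has norm $2|v^{\mathrm{hor},w}|/|w|$, where $v^{\mathrm{hor},w}=v-\langle v,w\rangle w/|w|^2$. Choose a unit representative of $a$ (still denoted $a$) and set
\[
M_t:=P_a+tP_a^\perp,\qquad w:=M_tz .
\]
Then $F_t^a([z])=[w]$ and $|w|^2=\tau+t^2(1-\tau)$. The pushforward of $\mu$ under $a\mapsto\tau$ is $n(1-\tau)^{n-1}\,d\tau$ on $[0,1]$. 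This is the standard fact that $|\langle z,a\rangle|^2$ is $\mathrm{Beta}(1,n)$-distributed for $a$ uniform on $\mb S^{2n+1}$; it follows from \eqref{eq:Unavg} together with the fact that $\mu$ is the pushforward of the round measure under $H$.

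For \eqref{eq:avg_rad}, differentiate in $t$: $\partial_tw=P_a^\perp z=:b$. We have $|b|^2=1-\tau$ and $\langle b,w\rangle=t(1-\tau)$, so
\[
|b^{\mathrm{hor},w}|^2=(1-\tau)-\frac{t^2(1-\tau)^2}{\tau+t^2(1-\tau)}=\frac{\tau(1-\tau)}{\tau+t^2(1-\tau)} .
\]
Hence $|\partial_tF_t^a(p)|_h^2=4\tau(1-\tau)/(\tau+t^2(1-\tau))^2$. Integrating this against $n(1-\tau)^{n-1}d\tau$ gives exactly $4A_n(t)$, which is \eqref{eq:avg_rad}.

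For \eqref{eq:avg_ang}, write $X=dH_z(v)$ with $v\perp z$. Then
\[
|dF_t^a(X)|_h^2=\frac{4\,|(M_tv)^{\mathrm{hor},w}|^2}{|w|^2}.
\]
The map $Q(X):=\int|dF_t^a(X)|^2d\mu(a)$ is a real quadratic form on $T_p\CP^n\cong(\C z)^\perp\cong\C^n$. It is invariant under the stabilizer $\tp{U}(1)\times\tp{U}(n)$ of $p$: conjugating by a stabilizer element permutes the $a$'s, preserves $\mu$, and acts isometrically on both ends. Since $\tp{U}(n)$ acts transitively on the unit sphere of $\C^n\cong\R^{2n}$, we get $Q=c\,|X|_h^2$. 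It therefore remains to show $c=1$, by computing a single direction or the trace.

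To do this, write $a=\sqrt\tau\,z+\sqrt{1-\tau}\,e^{i\theta}e$, where $e\perp z$ is a unit vector uniformly distributed on the unit sphere of $z^\perp$ and independent of $\tau$. Expand $M_tv=tv+(1-t)\langle v,a\rangle a$. Then compute the horizontal part relative to $w=tz+(1-t)\sqrt\tau\,a$, and average over $e$ using $\mathbb E|\langle v,e\rangle|^2=|v|^2/n$. This reduces $c$ to an explicit integral in $\tau$, which should equal $1$. The case $t=1$ is a sanity check, since $F_1^a=\Id$.

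I expect this last computation to be the main obstacle. The quadratic form mixes $\tau$, $\langle v,e\rangle$ and the normalization $|w|^{-2}$, and showing that the resulting $\tau$-integral equals $1$ for every $t$ may require an integration by parts in $\tau$. If that becomes messy, the fallback is to differentiate $c(t)$ in $t$ and show that the derivative vanishes.
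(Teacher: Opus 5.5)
Your proof of \eqref{eq:avg_rad} is correct and is the paper's computation (your $\tau$ is the paper's $s^2$). Your reduction of \eqref{eq:avg_ang} is also sound. The equivariance $F_t^{ga}=gF_t^ag^{-1}$ removes the dependence on $p$. Invariance under the isotropy group $\tp{U}(1)\times\tp{U}(n)$, which is transitive on unit vectors of $T_p\CP^n$, then gives $Q(X)=c(t)|X|_h^2$. But the argument stops exactly where the content lies. You only assert that the resulting $\tau$-integral ``should equal $1$'', and the check at $t=1$ says nothing about $t<1$. As written, \eqref{eq:avg_ang} is not proved.

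The idea the paper uses to pin down $c(t)$ is holomorphy. Since $dF_t^a$ commutes with $I$, we have $|dF_t^a(X)|_h^2=(F_t^a)^*\omega_{\FS}(X,IX)$. Hence $Q(X)=\omega_t(X,IX)$ with $\omega_t:=\int_{\CP^n}(F_t^a)^*\omega_{\FS}\,d\mu(a)$. Since each $(F_t^a)^*h$ is Hermitian, polarization shows that your identity $Q=c(t)|\cdot|_h^2$ is equivalent to $\omega_t=c(t)\,\omega_{\FS}$. Each $F_t^a$ is homotopic to the identity through $F_s^a$, $s\in[t,1]$, so $[\omega_t]=[\omega_{\FS}]$ and $c(t)=1$. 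Concretely, $F_t^a$ maps a projective line $L$ biholomorphically onto a projective line, so $\int_L(F_t^a)^*\omega_{\FS}=4\pi$ for every $a$, and therefore $4\pi c(t)=\int_L\omega_t=4\pi$.

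Your brute-force route would also have closed, and your guess about integrating by parts is right. Take $|v|=1$, $\sigma:=|\langle v,a\rangle|^2$ and $D:=\tau+t^2(1-\tau)$. Then $\langle M_tv,M_tz\rangle=(1-t^2)\langle v,a\rangle\langle a,z\rangle$, hence
\[
\frac{|dF_t^a(X)|_h^2}{|X|_h^2}=\frac{t^2+(1-t^2)\sigma}{D}-\frac{(1-t^2)^2\sigma\tau}{D^2}.
\]
This is affine in $\sigma$, so $\sigma$ may be replaced by $\mathbb{E}[\sigma\,|\,\tau]=(1-\tau)/n$. Set $u=1-\tau$ (density $nu^{n-1}\,du$) and $s=1-t^2\in[0,1)$. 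One then finds $c(t)-1=\int_0^1G'(u)\,du$ with $G(u):=-\frac{s\,u^n(1-u)}{1-su}$, and $G(0)=G(1)=0$. The paper's argument is shorter and explains why no $t$-dependence can occur. Yours is fully elementary, but it has to be carried out.
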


\begin{proof}
For fixed $t$, consider the averaged two-form
\[
    \omega_t:=\int_{\CP^n}(F_t^a)^*\omega_{\FS}\,d\mu(a).
\]
For $g\in\tp{U}(n+1)$, the projections satisfy
$P_{ga}=gP_ag^{-1}$.  Hence
\begin{equation}
\label{eq:invF}
    F_t^{ga}=g\circ F_t^a\circ g^{-1},
    \qquad g\in\tp{U}(n+1).
\end{equation}
Since $\mu$ is $\tp{U}(n+1)$-invariant, this implies that $\omega_t$ is
$\tp{U}(n+1)$-invariant and hence $\omega_t=c(t)\omega_\FS$ for some $c(t)\in \R$. This is a general fact about so-called irreducible homogeneous spaces, of which $\CP^n$ is a prime example, see \cite[7.41-7.44]{Besse1987}, but it has an elementary proof which we sketch: the isotropy group at $[e_0]$ acts on
$T_{[e_0]}\CP^n\simeq\C^n$ through the standard $\tp{U}(n)$ action,
whose invariant real two-forms are precisely the multiples of the standard K\"ahler form.  Indeed, the skew-adjoint complex-linear map associated with such a form commutes with $\tp{U}(n)$ and is therefore a real multiple of $I$.
Now, since $F_t^a$ is
homotopic to the identity through $F_s^a$, with $s\in[t,1]$, we see that
\[
    [(F_t^a)^*\omega_{\FS}]=[\omega_{\FS}]
    \qquad\text{in }H^2(\CP^n;\R).
\]
Averaging this identity over $a$ gives
$[\omega_t]=[\omega_{\FS}]$, so $c(t)=1$ and $\omega_t=\omega_\FS$.

Because $F_t^a$ is holomorphic, $dF_t^a$ commutes with $I$ and thus, by
\eqref{eq:kahler}, we have
$$|d F_t^a(X)|_h^2= (F_t^a)^*\omega_\FS(X,IX).$$
Thus, as  $\omega_t=\omega_{\FS}$, we have
\[
    \int_{\CP^n}|dF_t^a(X)|_h^2\,d\mu(a)
    =\omega_t(X,IX)
    =\omega_{\FS}(X,IX)
    =|X|_h^2,
\]
which proves \eqref{eq:avg_ang}.

To prove \eqref{eq:avg_rad}, fix $a,p\in\CP^n$ and choose unit lifts
$\hat a,\hat p\in\mb S^{2n+1}\subset\C^{n+1}$, and set
\[
    \gamma(t):=P_a\hat p+tP_a^\perp\hat p,
    \qquad
    s:=|\langle\hat a,\hat p\rangle|.
\]
All unsubscripted norms in the following computation are induced by the
standard Hermitian inner product on $\C^{n+1}$.
Then $[\gamma(t)]=F_t^a(p)$, and
\[
    |\gamma(t)|^2=s^2+t^2(1-s^2),\qquad
    \dot\gamma(t)=P_a^\perp\hat p,\qquad
    |\dot\gamma(t)|^2=1-s^2,\qquad
    \langle\dot\gamma(t),\gamma(t)\rangle=t(1-s^2).
\]
Writing $\zeta=\gamma/|\gamma|$, differentiation gives
\[
    \dot\zeta
    =\frac1{|\gamma|}\left(
      \dot\gamma-
      \frac{\operatorname{Re}\langle\dot\gamma,\gamma\rangle}
           {|\gamma|^2}\gamma
    \right), \qquad 
    (\dot\zeta)^{\mathrm{hor}}
    \overset{\eqref{eq:HopfHorizontalProjection}}{=}\frac1{|\gamma|}
    \left(\dot\gamma
    -\frac{\langle\dot\gamma,\gamma\rangle}{|\gamma|^2}\gamma\right). 
\]
Since $F_t^a(p)=H_n(\zeta(t))$, we have
\[
    |\partial_tF_t^a(p)|_h^2
    \overset{\eqref{eq:HopfDilation}}{=}4\left(
      \frac{|\dot\gamma|^2}{|\gamma|^2}
      -\frac{|\langle\dot\gamma,\gamma\rangle|^2}{|\gamma|^4}    \right)=f_t(s),
    \qquad
    f_t(s):=4\frac{s^2(1-s^2)}
    {\bigl(s^2+t^2(1-s^2)\bigr)^2}.
\]

It remains to average this identity.  By \eqref{eq:invF}, the result is
independent of $p$, so we may take $p=[e_0]$.  By \eqref{eq:Unavg},
\[
    \int_{\CP^n}|\partial_tF_t^a(p)|_h^2\,d\mu(a)
    =\int_{\tp{U}(n+1)}
      f_t(|\langle ge_0,e_0\rangle|)\,dm(g).
\]
The map $g\mapsto ge_0$ pushes forward Haar measure to normalized surface
measure on $\mb S^{2n+1}$.  Spherical coordinates
$(z_0,z')=(se^{i\theta},\sqrt{1-s^2}\,\xi)$ show that $s=|z_0|$ has density
$2ns(1-s^2)^{n-1}\,ds$ on $[0,1]$.  Consequently,
\[
\begin{aligned}
    \int_{\CP^n}|\partial_tF_t^a(p)|_h^2\,d\mu(a)
    &=\int_0^1f_t(s)\,2n s(1-s^2)^{n-1}\,ds\\
    &\overset{\tau=s^2}{=}
      4n\int_0^1
      \frac{\tau(1-\tau)^n}
      {\bigl(\tau+t^2(1-\tau)\bigr)^2}\,d\tau
    =4A_n(t),
\end{aligned}
\]
as claimed.
\end{proof}

We conclude by recording the next elementary lemma:

\begin{lemma}[\cite{Fujioka1993}]
    \label{lem:AmIntegralEstimate}
The functions defined in \eqref{eq:defAn} satisfy
\[
    \left(\int_0^1\sqrt{A_2(t)}\,dt\right)^2<\frac32,
    \qquad
    \left(\int_0^1\sqrt{A_m(t)}\,dt\right)^2<\frac m2
    \quad\text{for every integer }m\geq3.
\]
\end{lemma}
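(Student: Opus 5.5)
The plan is to bound $A_m$ pointwise by explicit elementary functions whose square roots can be integrated in closed form, and then check the resulting numbers. First I will compute $A_m(t)$ in closed form, or at least get its behaviour at the endpoints. At $t=1$ the denominator is $1$, so
\[
A_m(1)=m\int_0^1\tau(1-\tau)^m\,d\tau=\frac{m}{(m+1)(m+2)}.
\]
As $t\to0$ the integrand is about $(1-\tau)^m/\tau$ for $\tau\gg t^2$. This gives $A_m(t)\approx m\log(1/t^2)$ up to bounded terms, a logarithmic singularity, so $\sqrt{A_m}$ is integrable. The substitution $\sigma=\tau/(\tau+t^2(1-\tau))$, or simply expanding $(1-\tau)^m$ and using partial fractions in $\tau$ with parameter $c=1-t^2$, gives $A_m$ as a rational function of $t^2$ plus a multiple of $\log(1/t^2)$. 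For $m=2$ this is a finite explicit expression.

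Second, I will avoid integrating $\sqrt{A_m}$ directly and use Cauchy--Schwarz with a weight $w>0$:
\[
\Bigl(\int_0^1\sqrt{A_m}\Bigr)^2\le\int_0^1\frac{A_m}{w}\,dt\int_0^1 w\,dt.
\]
The weight should mimic $\sqrt{A_m}$, for instance $w(t)=\sqrt{\log(e^\kappa/t)}$ or a piecewise constant function, with a tunable parameter. The plain choice $w=1$ gives $\int_0^1A_m\,dt$, which is computable by Fubini:
\[
\int_0^1\frac{dt}{(\tau+t^2(1-\tau))^2}
\]
has a closed form involving $\arctan\sqrt{(1-\tau)/\tau}$. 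I will first test whether $w=1$ already suffices. For large $m$ one expects $\int_0^1 A_m\,dt$ to be roughly $m\cdot\tfrac{\pi}{2}\int\sqrt{\tau}(1-\tau)^{m-1}\ldots$, which decays in relative terms, so the bound $m/2$ should have room to spare for large $m$.

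Third, for general $m\ge3$ I will use monotonicity in $m$ where possible. Since $(1-\tau)^m$ is decreasing in $m$, we have $A_m/m\le A_3/3$ for $m\ge3$. Hence
\[
\Bigl(\int_0^1\sqrt{A_m}\Bigr)^2\le\frac m3\Bigl(\int_0^1\sqrt{A_3}\Bigr)^2,
\]
and it suffices to prove $\bigl(\int_0^1\sqrt{A_3}\bigr)^2<3/2$, which is the same threshold as in the $m=2$ case. By the same monotonicity, $A_3/3\le A_2/2$, so $\bigl(\int\sqrt{A_3}\bigr)^2\le\tfrac32\bigl(\int\sqrt{A_2}\bigr)^2$. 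That is not sufficient on its own, so the cases $m=2$ and $m=3$ are two explicit numerical checks.

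The main obstacle is making these two numerical inequalities rigorous with enough margin. Either Cauchy--Schwarz with $w=1$ and the exact value of $\int_0^1A_m\,dt$ is below the threshold, or I will split $[0,1]$ into a few subintervals, bound $A_m$ on each by its value at the left endpoint (it is decreasing in $t$), and treat $[0,\delta]$ using the logarithmic upper bound. This gives a finite Riemann-type upper sum to check.
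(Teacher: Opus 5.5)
The paper does not prove this lemma; it quotes it from Fujioka. So your proposal has to stand on its own.

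Your reduction is correct. Since $(1-\tau)^m\le(1-\tau)^3$ for $m\ge3$, you get $A_m\le\frac m3A_3$, and the lemma follows from the two inequalities $\bigl(\int_0^1\sqrt{A_2}\,dt\bigr)^2<\frac32$ and $\bigl(\int_0^1\sqrt{A_3}\,dt\bigr)^2<\frac32$. Both inequalities are true; numerically the left-hand sides are about $1.21$ and $1.43$. However, these two checks are the entire content of the lemma. Your proposal does not carry them out, and the tools you name would not carry them out as stated.

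\textbf{The test with $w\equiv1$ fails in both cases.} Carrying out your Fubini computation and substituting $\tau=\cos^2\theta$ gives
\[
\int_0^1A_m(t)\,dt=\frac{m}{2(m+1)}+m\int_0^{\pi/2}\theta\sin^{2m}\theta\,d\theta .
\]
Hence $\int_0^1A_2\,dt=\frac56+\frac{3\pi^2}{32}\approx1.76$ and $\int_0^1A_3\,dt=\frac38+\frac3{32}\bigl(\frac{5\pi^2}{4}+\frac{68}{9}\bigr)\approx2.24$. Both exceed $\frac32$.

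\textbf{The fallback faces a thin margin for $m=3$.} You have $\int_0^1\sqrt{A_3}\,dt\approx1.196$ against $\sqrt{3/2}\approx1.225$, so only about $0.03$ to spare.
\begin{enumerate}
\item Left-endpoint upper sums over ``a few'' subintervals lose far more than that. Ten equal pieces on $[0.05,1]$ already overshoot by about $0.14$, because $\sqrt{A_3}$ drops from about $3.1$ to $0.39$ on that interval.
\item The piece near $t=0$ cannot be handled by a crude logarithmic bound. The exact formula
\[
A_3(t)=\frac{3}{(1-c)^5}\Bigl[(1+3c)\log\frac1c-\frac{(1-c)(17+8c-c^2)}{6}\Bigr],\qquad c=t^2,
\]
shows that $A_3(t)=6\log\frac1t-\frac{17}{2}+O\bigl(t^2\log\frac1t\bigr)$. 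Replacing $A_3$ on $[0,0.05]$ by the natural bound $A_3(t)\le 3\log(1/t^2)/(1-t^2)$, which drops the constant $-\frac{17}{2}$, already costs about $0.05$. That is more than the whole margin.
\end{enumerate}

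To finish, you need one of the following:
\begin{enumerate}
\item The exact closed forms of $A_2,A_3$ (or at least the sharp constant term) near $0$, combined with a fine partition, geometric near $0$, and rigorous left-endpoint bounds using that $A_m$ is decreasing in $t$.
\item A weighted Cauchy--Schwarz with a weight close to $\sqrt{A_3}$ itself, whose integrals you can actually bound.
\end{enumerate}
In practice either route is a careful, essentially computer-assisted verification. As written, the proposal is a correct reduction to two numerical facts plus an unverified assertion that they hold.
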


\section{Harmonic maps \texorpdfstring{$\mb S^2\to \CP^n$}{S2 to CPn}}\label{sec:S2toCPn}

For a complex subspace $W\subset\C^{n+1}$, write
\[
    \mb P(W):=\{L\subset W:L\text{ is a complex line}\}.
\]
A map into $\mb P(W)$ is \emph{linearly full} (or simply \emph{full}) if
its image is not contained in $\mb P(V)$ for any proper complex subspace
$V\subset W$.  Given $u\colon\mb S^2\to\CP^n$, we set
\begin{equation}
    \label{eq:projectiveSpanDimension}
    W:=\operatorname{span}_{\C}
       \left(\bigcup_{x\in\mb S^2}u(x)\right),
    \qquad
    m:=\dim_{\C}W-1.
\end{equation}
Thus $\mb P(W)$ is the smallest projective subspace containing the image of
$u$, and $u$ is linearly full when regarded as a map into
$\mb P(W)\cong\CP^m$.

Harmonic spheres into $\CP^n$ can be completely described using twistor theory \cite{EellsWood1983}. For an introduction to these methods in the slightly simpler (but closely related) setting of sphere-valued maps we refer the reader to \cite[\S 6]{Helein2002}.  In particular, we have the following result:

\begin{theorem}[Energy dichotomy for harmonic two-spheres \cite{EellsWood1983}]
    \label{thm:harmonicSphereDichotomyCPn}
Let $u\in C^\infty(\mb S^2;\CP^n)$ be harmonic, and define $m$ by
\eqref{eq:projectiveSpanDimension}.  Then exactly one of the following
holds:
\begin{enumerate}
    \item $u$ is holomorphic or anti-holomorphic, and
    \[
        E_{\mb S^2}(u)=4\pi|d(u)|\geq4\pi m;
    \]
    \item $u$ is neither holomorphic nor anti-holomorphic, and
    \[
        E_{\mb S^2}(u)\geq4\pi(3m-2)\geq16\pi.
    \]
\end{enumerate}
\end{theorem}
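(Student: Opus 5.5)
The plan is to obtain the dichotomy from the Eells--Wood classification of harmonic two-spheres in $\CP^n$. We first reduce to the linearly full case: $\mb P(W)\cong\CP^m$ is a totally geodesic, isometrically embedded projective subspace. Hence $u$ is harmonic as a map into $\CP^m$, with the same energy, the same degree, and the same (anti-)holomorphicity. So we may assume $u$ is full in $\CP^m$.

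The Eells--Wood theorem says that every full harmonic map $u\colon\mb S^2\to\CP^m$ lies in a harmonic sequence $u_0,u_1,\dots,u_m$. This sequence is obtained by iterating the $\partial'$-Gauss transform starting from a full holomorphic curve $f=u_0$ (its associated curve). We have $u=u_k$ for some $0\le k\le m$, where $k=0$ is the holomorphic case and $k=m$ the anti-holomorphic case. Each $u_j$ is full, being built from the osculating flag of $f$.

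In the holomorphic case $k=0$, formula \eqref{eq:energyDegreeHolomorphic} gives $E=4\pi|d(u)|$. By Lemma \ref{lem:holomorphicHomogeneousRepresentation}, $u=[P_0:\dots:P_m]$ with homogeneous polynomials of degree $d$. Fullness means the $P_j$ are linearly independent in the $(d+1)$-dimensional space of degree-$d$ forms, so $m+1\le d+1$, i.e.\ $d\ge m$. The anti-holomorphic case follows by conjugation.

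For $0<k<m$, we use the Plücker-type formulas for the harmonic sequence. Write $\delta_j:=d(u_j)$ and let $r_j\ge0$ be the ramification index of the $j$-th associated curve $f_j$. Then
\[
E(u_k)=4\pi\bigl(\deg f_{k}+\deg f_{k-1}\bigr),
\]
where $\deg f_j$ denotes the degree of the $j$-th osculating curve in the Grassmannian (Plücker) embedding. The Plücker formula for a full curve of genus $0$ gives $\deg f_j\ge (j+1)(m-j)$, with equality for the rational normal curve. So
\[
E\ge 4\pi\bigl[(k+1)(m-k)+k(m-k+1)\bigr]=4\pi\bigl[2k(m-k)+m\bigr].
\]
For $1\le k\le m-1$ we have $k(m-k)\ge m-1$, so $E\ge 4\pi(3m-2)$. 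A nonholomorphic harmonic map with $m=1$ cannot occur, since harmonic maps $\mb S^2\to\CP^1$ are $\pm$-holomorphic. Hence $m\ge2$ and $3m-2\ge4$, giving $E\ge16\pi$.

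The main obstacle is pinning down the energy identity $E(u_k)=4\pi(\deg f_k+\deg f_{k-1})$ in the normalization of this paper, together with the lower bound $\deg f_j\ge(j+1)(m-j)$. I would derive both from the unitary-frame (Frenet) equations of the harmonic sequence: the energy density of $u_k$ is $|\partial' u_k|^2+|\partial''u_k|^2=\gamma_k+\gamma_{k-1}$. Integrating the Toda/Plücker relations $\Delta\log\gamma_j=\dots$ then converts the integrals of the $\gamma_j$ into degrees plus nonnegative ramification terms. Exclusivity of the two alternatives is immediate from the definitions, since a map that is holomorphic or anti-holomorphic falls only in the first case.
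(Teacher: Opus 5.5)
Your proposal is correct and follows essentially the same route as the paper. Both arguments reduce to the linearly full case, invoke the Eells--Wood description of $u$ as the $k$-th element of the harmonic sequence of a full holomorphic curve, use the energy identity $E(u)=4\pi(D_{k-1}+D_k)$ together with the genus-zero Pl\"ucker bound $D_j\ge (j+1)(m-j)$ (which the paper derives by a concavity argument from the Pl\"ucker relations), and conclude with $k(m-k)\ge m-1$. The only deviation is in the holomorphic case, where you get $d(u)\ge m$ directly from the linear independence of the $P_j$ in Lemma \ref{lem:holomorphicHomogeneousRepresentation} rather than from $D_0\ge m$; this is a harmless elementary shortcut.
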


This result is surely known to some experts but, since we could not find it stated explicitly anywhere (except the $n=2$ version of the result, which the reader can find in \cite[Corollary 2.3]{Lemaire2002}), we explain here how it can be deduced from the results in \cite{EellsWood1983}.

\begin{proof}[Proof of Theorem \ref{thm:harmonicSphereDichotomyCPn}]
The assertions are immediate if $u$ is constant, so suppose that $u$ is
nonconstant.  After identifying $\mb P(W)$ with $\CP^m$, we may regard $u$
as a linearly full harmonic map into $\CP^m$.  We recall the part of the
directrix theory that we need.  If $f\colon\mb S^2\to\CP^m$ is linearly
full and holomorphic, its associated curves $f_r$, $0\leq r\leq m$, are
the holomorphic extensions of the osculating-space maps
\[
    x\longmapsto
    \operatorname{span}_{\C}\{F(x),\partial_zF(x),\ldots,
    \partial_z^rF(x)\}\in \operatorname{Gr}_{r+1}(\C^{m+1}),
\]
where $F$ is a local holomorphic lift of $f$; see
\cite[Section~3A]{EellsWood1983}.  Corollary~7.4 of
\cite{EellsWood1983} says that there is a unique pair $(f,r)$, with
$r\in\{0,\ldots,m\}$, for which
\[
    u(x)=f_r(x)\cap f_{r-1}(x)^\perp,
    \qquad f_{-1}(x)=\{0\},\quad f_m(x)=\C^{m+1}.
\]
Here $f_{r-1}(x)^\perp$ is the orthogonal complement in $\C^{m+1}$ with
respect to the standard Hermitian inner product
$\langle\cdot,\cdot\rangle$.
More precisely, Eells--Wood identify $r$ with the $D''$-order of $u$ and
$m-r$ with its $D'$-order; see \cite[Theorem~6.9(i) and
Remark~6.10(i)]{EellsWood1983}.  By \cite[Definition~6.1]{EellsWood1983},
the $D''$-order is zero if and only if $u$ is holomorphic, while the
$D'$-order is zero if and only if $u$ is anti-holomorphic.  Hence
\[
    u\text{ is holomorphic}\iff r=0,
    \qquad
    u\text{ is anti-holomorphic}\iff r=m.
\]

For $0\leq a\leq m-1$, the Pl\"ucker embedding is the holomorphic map
\[
    \begin{aligned}
    \operatorname{Pl}_a\colon \operatorname{Gr}_{a+1}(\C^{m+1})
        &\longrightarrow \mb P\bigl(\Lambda^{a+1}\C^{m+1}\bigr),\\
    \operatorname{span}_{\C}\{v_0,\ldots,v_a\}
        &\longmapsto [v_0\wedge\cdots\wedge v_a].
    \end{aligned}
\]
The \emph{Pl\"ucker degree} $D_a$ of $f_a$ is the ordinary projective
degree, as defined in \eqref{eq:degreeCPn}, of the holomorphic curve
$\operatorname{Pl}_a\circ f_a$, i.e.\
\[
    D_a=\frac{1}{4\pi}\int_{\mb S^2}
    (\operatorname{Pl}_a\circ f_a)^*\omega_\FS\in\N_0.
\]
We set $D_{-1}=0$, and $D_m=0$ since
$\operatorname{Gr}_{m+1}(\C^{m+1})$ is a point.

Let $\beta_a\geq0$ denote the total ramification index of $f_a$.  By
\cite[Proposition~7.1]{EellsWood1983}  (with $c=1$, corresponding to our normalization of the
Fubini--Study metric), we have
\begin{equation}
    \label{eq:degreeFormEnergy}
    \frac{E_{\mb S^2}(u)}{4\pi}=D_{r-1}+D_r.
\end{equation}
Since $\mb S^2$ has genus zero, the Pl\"ucker relations for the associated
curves read
\begin{equation}
    \label{eq:pluckerDegrees}
    D_a-2D_{a-1}+D_{a-2}=-2-\beta_{a-1},
    \qquad a=1,\ldots,m;
\end{equation}
see \cite[equation~(7.1)]{EellsWood1983}. For $a=-1,0,\ldots,m$, set
\[
    Q_a:=(a+1)(m-a).
\]
Thus $Q_{-1}=Q_m=0$, and a direct calculation gives
\[
    Q_a-2Q_{a-1}+Q_{a-2}=-2,
    \qquad a=1,\ldots,m.
\]
Consequently $H_a:=D_a-Q_a$ satisfies $H_{-1}=H_m=0$ and, by
\eqref{eq:pluckerDegrees},
\[
    H_a-2H_{a-1}+H_{a-2}=-\beta_{a-1}\leq0,
    \qquad a=1,\ldots,m.
\]
In particular, the successive slopes $H_a-H_{a-1}$ are nonincreasing, so
the piecewise-affine interpolation of the sequence $(H_a)_{a=-1}^m$ is
concave, hence it is non-negative, i.e.\
\begin{equation}
    \label{eq:degreeLowerBoundAssociatedCurves}
    D_a\geq Q_a=(a+1)(m-a),
    \qquad a=-1,0,\ldots,m.
\end{equation}
Finally, \eqref{eq:degreeFormEnergy} and
\eqref{eq:degreeLowerBoundAssociatedCurves} give
\begin{equation}
    \label{eq:lowerBoundFullHarmonicSphere}
    \frac{E_{\mb S^2}(u)}{4\pi}
    \geq r(m-r+1)+(r+1)(m-r)
    =m+2r(m-r)\geq m,
\end{equation}
because $0\leq r\leq m$.

For $r=0$ or $r=m$, \eqref{eq:energyDegreeHolomorphic} gives
$E_{\mb S^2}(u)=4\pi|d(u)|$, while
\eqref{eq:lowerBoundFullHarmonicSphere} gives $|d(u)|\geq m$.  Otherwise
$m\geq2$ and $1\leq r\leq m-1$.  Since
$r(m-r)\geq m-1$, \eqref{eq:lowerBoundFullHarmonicSphere} gives
\[
    \frac{E_{\mb S^2}(u)}{4\pi}
    \geq m+2r(m-r)\geq3m-2\geq4,
\]
which proves the second alternative.
\end{proof}

\section{Tangent maps \texorpdfstring{$\mb B^3\to \CP^n$}{B3 to CPn}}\label{sec:B3CPn}

The purpose of this section is to prove Theorem \ref{thm:Class3d}.  We first
prove the classification when the target is $\CP^2$.  We then use Theorem
\ref{thm:harmonicSphereDichotomyCPn} and the averaged projective dilations from Section~3 to
reduce the general case to $\CP^2$.
Indeed, we begin with:

\begin{theorem}
    \label{thm:CP2Classification}
Let $u\in C^\infty(\mb S^2;\CP^2)$ be a nonconstant harmonic map.  If
$\hat u$ is minimizing, then $u$ is an isometric embedding.
\end{theorem}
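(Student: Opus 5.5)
The plan is to split according to Theorem \ref{thm:harmonicSphereDichotomyCPn} with $n=2$ (so $m\in\{1,2\}$). In each case where $u$ is not an isometric embedding, we build a competitor for $\hat u$ on $\mb B^3$ with strictly smaller energy. The competitor is constant on a small ball $B_\e$. On each sphere $\partial B_r$ with $\e<r\le 1$, it is given by a projective dilation of $u$:
\[
v_a(r\omega)=F^a_{t(r)}\circ u(\omega),
\]
where $t\colon[\e,1]\to(0,1]$, $t(1)=1$ and $t(\e)\approx 0$. Since $F_t^a\circ u$ may not converge uniformly to a constant as $t\to0$ (points of $u^{-1}(\mb P(a^\perp))$), we insert a thin transition layer near $r=\e$ whose cost is shown to vanish. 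We then average the energy of $v_a$ over $a\in\CP^2$ with respect to $\mu$. By Lemma \ref{lem:averageFt}, \eqref{eq:avg_ang}, the angular part averages exactly to the energy of $\hat u$. The radial part averages to
\[
\int_\e^1 r^2\,t'(r)^2\,4A_2(t(r))\,\frac{\operatorname{area}}{r^2}\,dr .
\]
Optimizing over $t$, after the substitution $s=\log r$, gives a saving in the angular energy near the origin at a cost governed by $\bigl(\int_0^1\sqrt{A_2}\,dt\bigr)^2$. This is the Brezis--Coron--Lieb mechanism. Some $a$ then does at least as well as the average, which yields a competitor.

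The energy comparison goes as follows. Write $E(\hat u;\mb B_1)=E_{\mb S^2}(u)$, since the radial weight gives $\int_0^1 dr$. The best competitor's energy is controlled by
\[
2\sqrt{E_{\mb S^2}(u)}\cdot 4\pi\Bigl(\int_0^1\sqrt{A_2}\Bigr)\cdot(\text{normalizing constants}),
\]
as in \cite[Theorem 7.4]{Brezis1986}, in the form used by Fujioka. So minimality forces an inequality of the form $E_{\mb S^2}(u)\le 4\pi\cdot C\,(\int_0^1\sqrt{A_2})^2$, and Lemma \ref{lem:AmIntegralEstimate} gives $(\int\sqrt{A_2})^2<3/2$. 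I will treat the cases as follows.
\begin{enumerate}
\item If $u$ is not (anti-)holomorphic, Theorem \ref{thm:harmonicSphereDichotomyCPn} gives $E\ge 16\pi$. This exceeds the threshold, so $\hat u$ is not minimizing.
\item If $u$ is holomorphic with $|d(u)|\ge3$, the same computation excludes it.
\item If $|d(u)|=1$, Lemma \ref{lem:holomorphicHomogeneousRepresentation} shows $u$ is linear, hence after a unitary change a M\"obius map onto a projective line. Stationarity, via the balancing condition \eqref{eq:balancingCondition} of Proposition \ref{prop:stationarityHomogeneous}, forces the conformal factor to be balanced, so $u$ is an isometry onto a totally geodesic $\CP^1$, i.e.\ an isometric embedding.
\end{enumerate}

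The main obstacle is degree $2$ (energy $8\pi$). Here the crude averaged threshold is likely not enough, because $8\pi$ sits at the borderline. If $m=1$, $u$ lies in a line and is a degree-two map $\mb S^2\to\CP^1$, which the Brezis--Coron--Lieb result already excludes (their classification covers $\CP^1\cong\mb S^2$). If $m=2$, $u$ is a full conic. For this case I would first try a sharper, non-averaged competitor adapted to the conic. Using $\tp{U}(3)$ and balancing we may normalize $u$ to a symmetric form such as $[z_0^2:\sqrt2 z_0z_1:z_1^2]$ up to an $SL_2$ reparametrization. I would then choose the dilation center $a$ on the conic, or alternatively off it, and compute $\int|\p_tF_t^a\circ u|^2$ explicitly, aiming to beat $8\pi$ against the cost. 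A second option is to deform $u$ into a double cover of a line plus a bubble, taking the constant $3/2$ from Lemma \ref{lem:AmIntegralEstimate} as the target to beat.
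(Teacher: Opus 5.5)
Your non-holomorphic and $|d(u)|\geq 3$ cases are the paper's averaged-dilation argument. The comparison is linear, though, and you should pin it down. The competitor saves $\e E_{\mb S^2}(u)$ on $B_\e$ and costs $8\pi\frac{\e}{1-\e}\bigl(\int_0^1\sqrt{A_2}\,dt\bigr)^2$. This gives \eqref{eq:upperEnergyCPn}, i.e.\ $E_{\mb S^2}(u)<12\pi$. There is no $\sqrt{E}$ term, and the constant matters because the threshold must exclude exactly $E=12\pi$.

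No transition layer is needed either. With $t(\e)=0$ and generic $a$, $F^a_t\circ u\to a$ a.e., and finite averaged energy identifies the inner trace.

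Your degree-one argument via \eqref{eq:balancingCondition} is correct and more elementary than the paper's appeal to \cite{Brezis1986}. For a M\"obius map $\psi$ one has $\int|d\psi|^2\omega=2\int\psi^{-1}(y)\,dy$, which vanishes only for isometries. Your $m=1$ degree-two subcase via \cite{Brezis1986} is also fine.

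The genuine gap is the full degree-two case. This is the hardest step of the theorem (Proposition~\ref{prop:step3}), and you only list things you would try. The dilation idea does not close it. For holomorphic $u$, every $F^a_t\circ u$ has angular energy exactly $8\pi$. So a core-plus-dilation competitor with centre $a$ and optimal profile beats $\hat u$ only if $\bigl(\int_0^1\sqrt{K_a}\,dt\bigr)^2<1$, where $8\pi K_a(t)=\frac12\int_{\mb S^2}|\p_t(F^a_t\circ u)|^2$. Averaging in $a$ only yields the threshold $12\pi$. For the Veronese map with $a$ on the conic one finds $K_a(t)=\int_0^1\frac{\sigma^2(1-\sigma^2)}{(\sigma^2+t^2(1-\sigma^2))^2}\,d\sigma\sim\frac{\pi}{4t}$ as $t\to0$, and $\int_0^1\sqrt{K_a}\,dt\approx1.3$. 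So that competitor loses. Your bubble-splitting alternative is not specified enough to evaluate.

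Your normalization is also wrong. M\"obius reparametrizations are not isometries of $\mb S^2$ and do not preserve minimality of $\hat u$. Moreover, full conics modulo $\tp{U}(3)$ and M\"obius maps already form a two-parameter family, since $\dim_\R\mathrm{PGL}_3(\C)=16$ exceeds $8+6$. What stationarity actually gives (Lemma~\ref{lem:StationarityCPDeg2}) is that $A^*A$ is real. Hence $u=\Sigma_D$ with $D=\operatorname{diag}(\sqrt a,\sqrt b,\sqrt c)$, $0\le a\le b\le c$, and any competitor must beat every member of this family.

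The missing idea is the Brezis--Coron--Lieb dipole: split the degree-two point singularity into two degree-one singularities,
\[
v(x)=\Sigma_D\bigl(\omega_+(x),\omega_-(x)\bigr),\qquad \omega_\pm(x)=\frac{x\mp e_3}{|x\mp e_3|}.
\]
In prolate spheroidal coordinates each level ellipsoid carries a holomorphic degree-two map. Hence $E_{\Om_T}(v)=8\pi(\cosh T-1)+R_T$, against $E_{\Om_T}(\hat u)=8\pi\cosh T+o(1)$. The real work is the estimate $R<8\pi$ (Lemma~\ref{lem:EnergyCompR}), which uses the pairing $\varphi\mapsto\varphi+\pi/2$ and the ordering $a\le b\le c$. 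Finally, $v$ does not have trace $u$ on $\p\mb B^3$, only $v-\hat u=O(|x|^{-2})$ with gradient difference $O(|x|^{-3})$. So you also need the rescaling and interpolation argument of Lemma~\ref{lem:AsymptoticCompetitor} to contradict minimality.
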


We divide the proof of this result into the following three steps.

\begin{proposition}
    \label{prop:step1}
In the setting of Theorem \ref{thm:CP2Classification}, $u$
is holomorphic or anti-holomorphic.
\end{proposition}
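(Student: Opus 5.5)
We argue by contradiction. Suppose $u\colon\mb S^2\to\CP^2$ is harmonic and nonconstant but neither holomorphic nor anti-holomorphic. By Theorem \ref{thm:harmonicSphereDichotomyCPn}, $u$ is then linearly full in $\CP^2$ (so $m=2$) and
\[
E_{\mb S^2}(u)\geq 4\pi(3m-2)=16\pi.
\]
Our goal is a competitor $v$ agreeing with $\hat u$ on $\partial\mb B^3$ with $E_{\mb B^3}(v)<E_{\mb B^3}(\hat u)=E_{\mb S^2}(u)$. This will contradict minimality.

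Following \cite[Theorem 7.4]{Brezis1986} and the averaging of Lemma \ref{lem:averageFt}, we fix $\e\in(0,1)$ and a profile $t_\e\colon[\e,1]\to[0,1]$, increasing, with $t_\e(\e)=0$ and $t_\e(1)=1$. For $a\in\CP^2$ we set
\[
v_a(r\omega):=F^a_{t_\e(r)}(u(\omega))\quad\text{for }\e<r\leq1,\qquad v_a:=a\ \text{on }B_\e .
\]
This map is continuous away from the set $\{r=\e,\ u(\omega)\in\mb P(a^\perp)\}$. Since $u$ is nonconstant and smooth, that set is small, and the map lies in $W^{1,2}$ for $\mu$-a.e.\ $a$ provided the energy below is finite. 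We will check that the energy formula applies for a.e.\ $a$, possibly after a truncation near $r=\e$.

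We then compute the energy in polar coordinates,
\[
E(v_a)=\frac12\int_\e^1\int_{\mb S^2}\Bigl(|dF^a_{t}(du)|_h^2+r^2\,t_\e'(r)^2\,|\partial_tF^a_t(u)|_h^2\Bigr)\,dV\,dr,
\]
and average over $a$ with respect to $\mu$. By \eqref{eq:avg_ang} the angular term averages to $(1-\e)E_{\mb S^2}(u)$. By \eqref{eq:avg_rad} the radial term averages to
\[
2\cdot4\pi\int_\e^1 r^2\,t_\e'(r)^2\,A_2(t_\e(r))\,dr .
\]
To handle this term we optimise the profile. Substituting $s=t_\e(r)$ and applying Cauchy--Schwarz with weight $r^2$, the minimum over profiles is
\[
8\pi\,\frac{\bigl(\int_0^1\sqrt{A_2}\bigr)^2}{\int_\e^1 r^{-2}\,dr}=8\pi\,\frac{\e}{1-\e}\Bigl(\int_0^1\sqrt{A_2}\Bigr)^2,
\]
achieved by taking $t_\e$ with $\sqrt{A_2(t_\e)}\,t_\e'\propto r^{-2}$. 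The resulting averaged energy is
\[
(1-\e)E(u)+8\pi\,\frac{\e}{1-\e}\Bigl(\int_0^1\sqrt{A_2}\Bigr)^2 .
\]
Its derivative in $\e$ at $\e=0$ equals $-E(u)+8\pi\bigl(\int_0^1\sqrt{A_2}\bigr)^2$. Lemma \ref{lem:AmIntegralEstimate} bounds this by
\[
-E(u)+8\pi\cdot\tfrac32=-E(u)+12\pi<0,
\]
because $E(u)\geq16\pi$. Hence, for small $\e$, the average over $a$ of $E(v_a)$ is strictly less than $E(u)$, so some $v_a$ is a strictly better competitor. The contradiction shows that $u$ is holomorphic or anti-holomorphic.

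The main obstacle is the admissibility of $v_a$, that is, the integrability of the radial term near $r=\e$. In the formula for $f_t(s)$, as $t\to0$ the integrand concentrates where $s$ is small, i.e.\ near $\mb P(a^\perp)$, and convergence is only guaranteed after averaging in $a$. I would justify the construction using Fubini: the averaged energy is finite, so $E(v_a)<\infty$ for $\mu$-a.e.\ $a$. Then $v_a\in W^{1,2}$ follows, since $v_a$ is locally Lipschitz off a closed set of zero capacity. A further point to check is whether $\int_0^1\sqrt{A_2}<\infty$, given that $A_2(t)\sim c\log(1/t)$ as $t\to0$; this is integrable and is in any case implicit in Lemma \ref{lem:AmIntegralEstimate}.
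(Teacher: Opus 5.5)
Your proof is correct and is essentially the paper's argument. It uses the same averaged competitor with the optimal profile $\sqrt{A_2(t_\e)}\,t_\e'\propto r^{-2}$, obtains the same averaged energy $(1-\e)E_{\mb S^2}(u)+8\pi\frac{\e}{1-\e}\bigl(\int_0^1\sqrt{A_2}\,dt\bigr)^2$, and plays Lemma \ref{lem:AmIntegralEstimate} against the $16\pi$ lower bound of Theorem \ref{thm:harmonicSphereDichotomyCPn}. The differences are cosmetic. The paper divides by $\e$ and lets $\e\to0$ to get $E_{\mb S^2}(u)<12\pi$ directly rather than arguing by contradiction. It also handles admissibility through the inner trace (for a.e.\ $a$, $u^{-1}(\mb P(a^\perp))$ has zero area) instead of your capacity argument.
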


\begin{proposition}[\cite{Fujioka1993}]
    \label{prop:step2}
In the setting of Theorem \ref{thm:CP2Classification}, if $u$ is
holomorphic, then $d(u)\leq2$.
\end{proposition}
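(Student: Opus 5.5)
Assume $u$ is holomorphic with $d:=d(u)\geq 3$; we construct a competitor for $\hat u$ on $\mb B^3$ with strictly smaller energy. Then $\hat u$ is not minimizing, which is a contradiction. The plan follows \cite[Theorem 7.4]{Brezis1986}, using the averaged projective dilations of Lemma \ref{lem:averageFt}. The main tool is the averaged estimate in Lemma \ref{lem:AmIntegralEstimate}, applied with $m=d$ as the relevant index (or $m=2$ via the span).

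Fix a profile $t\colon[0,1]\to[0,1]$ with $t(r)\to0$ as $r\to0$ and $t(1)=1$. For $a\in\CP^2$ define
\[
v_a(r\omega):=F^a_{t(r)}\bigl(u(\omega)\bigr).
\]
This map agrees with $u$ on $\partial\mb B^3$ and tends to the constant $a$ as $r\to0$; a cutoff at $r=\e$ makes it honest. Since $F^a_t$ is biholomorphic, $F^a_t\circ u$ is holomorphic of degree $d$. Hence its angular energy on each sphere equals $4\pi d$, independently of $a$ and $t$. This is the key simplification over the non-holomorphic case. The energy of $v_a$ is therefore
\[
E(v_a)=\frac12\int_0^1\Bigl(4\pi d\cdot 2+r^2\,t'(r)^2\int_{\mb S^2}|\partial_tF^a_t(u)|_h^2\Bigr)dr .
\]
Here the first term is $\tfrac12\int_{\mb S^2}|du|^2=4\pi d$, so that $E(\hat u)=4\pi d$ on the unit ball.

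There is, however, no saving from the angular part. The gain has to come from averaging over $a$. We therefore instead let the dilation depend on the point in a way that reduces the angular energy. On the inner region, the map should pass through degree-one-like configurations.

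A more effective approach is to average $E(v_a)$ over $a\in\CP^2$ using \eqref{eq:avg_rad}. This gives
\[
\int E(v_a)\,d\mu(a)=4\pi d+2\pi\int_0^1 r^2t'(r)^2\,4A_2(t(r))\,dr\cdot 2.
\]
This is still larger than $4\pi d$. So the competitor must also exploit the freedom to replace $u$, on small spheres, by a lower-energy map. This follows Brezis--Coron--Lieb: cut out a cone around a direction and insert a map which on spheres $\partial B_r$ near $0$ is close to a constant. The energy saving comes from the $r^{m-2}$ weighting with $m=2$. Concretely, I would glue $\hat u$ outside a ball $B_\rho(x_0)$ centred at a point $x_0=\rho e$ to the dilated family $F^a_{t(|x-x_0|)}\circ u$ inside, with $a$ averaged. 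Optimizing the profile $t$ via the one-dimensional problem $\min\int(\dots)$ gives an energy change of order
\[
\rho\Bigl[8\pi\Bigl(\int_0^1\sqrt{A_2(t)}\,dt\Bigr)^2\cdot\tfrac{4}{3}\cdot\ldots-4\pi d\Bigr],
\]
after the Cauchy--Schwarz optimization $\min_t\int(r^2t'^2A+c)\,dr\sim2\sqrt c\int\sqrt{A}$. This is negative precisely when $\bigl(\int_0^1\sqrt{A_2}\bigr)^2<\tfrac{3}{2}$ fails to be compensated, i.e.\ for $d\geq3$ by Lemma \ref{lem:AmIntegralEstimate}.

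The main obstacle is the exact bookkeeping of this gluing. One must track the boundary terms on $\partial B_\rho(x_0)$ and the first-order change of the cone energy, so that the constant in front of $\bigl(\int\sqrt{A_2}\bigr)^2$ comes out exactly as required to compare with $d/2$, with the threshold at $d=2$. I would first carry out the computation in the model case $u(\omega)=[1:\omega\text{-stereographic}]^{\otimes d}$, the Veronese-type map, to fix the constants, and then use the $a$-averaging to pass to general $u$.
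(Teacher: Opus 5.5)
Your proposal has a genuine gap. It never exhibits a competitor with a verified energy deficit, because you discarded the right construction for the wrong reason. In your first attempt the profile has $t>0$ on all of $(0,1]$, so every sphere $\partial B_r$ carries the full angular energy $4\pi d$ and nothing is gained. The cutoff at $r=\varepsilon$ is not a technicality; it is the entire gain.

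Take, as in \eqref{eq:competitorCP2}, $v_{a,\varepsilon}\equiv a$ on $B_\varepsilon$ and $v_{a,\varepsilon}(r\omega)=F^a_{t_\varepsilon(r)}(u(\omega))$ for $\varepsilon<r\le1$, with $t_\varepsilon(\varepsilon)=0$ and $t_\varepsilon(1)=1$. In three dimensions the angular energy on $\partial B_r$ of $\omega\mapsto w(\omega)$ is $\tfrac12\int_{\mathbb S^2}|dw|^2$, independently of $r$. Since $F^a_t\circ u$ is holomorphic of degree $d$ for every $t>0$, the annulus carries angular energy exactly $(1-\varepsilon)4\pi d$, so you save $4\pi d\,\varepsilon$. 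By \eqref{eq:avg_rad} the averaged radial cost is $8\pi\int_\varepsilon^1 r^2A_2(t_\varepsilon)|t_\varepsilon'|^2\,dr$; note $8\pi$, not the $16\pi$ in your display, and this constant decides the threshold. Choose $t_\varepsilon$ by $\int_0^{t_\varepsilon(r)}\sqrt{A_2}=\frac{1-\varepsilon/r}{1-\varepsilon}\int_0^1\sqrt{A_2}$, which minimizes this one-dimensional functional. The cost then becomes $8\pi\frac{\varepsilon}{1-\varepsilon}\bigl(\int_0^1\sqrt{A_2}\bigr)^2$. Both the saving and the cost are linear in $\varepsilon$, so minimality gives
\[
4\pi d\le \frac{8\pi}{1-\varepsilon}\Bigl(\int_0^1\sqrt{A_2(t)}\,dt\Bigr)^2 .
\]
Letting $\varepsilon\to0$ and using Lemma \ref{lem:AmIntegralEstimate} yields $4\pi d<12\pi$, i.e.\ $d\le2$. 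For admissibility, restrict to $a$ with $u^{-1}(\mathbb P(a^\perp))$ null, so the inner trace is the constant $a$. Finiteness of the averaged energy then gives finite energy for a.e.\ $a$.

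The construction you substituted does not amount to a proof. The map on $B_\rho(\rho e)$ is not consistently specified: the singularity of $\hat u$ lies on $\partial B_\rho(\rho e)$, and it is unclear where any energy is saved. The energy change contains an undetermined factor ``$\tfrac43\cdot\ldots$''. The optimization $\min\int(r^2t'^2A+c)\sim2\sqrt c\int\sqrt A$ is not the relevant one, since here saving and cost scale identically in $\varepsilon$. The closing criterion (``negative precisely when $(\int\sqrt{A_2})^2<\tfrac32$ fails to be compensated'') yields no inequality.

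Finally, the index $m$ in $A_m$ is the dimension of the projective space over which the dilation centre $a$ is averaged, not the degree. Since $\hat u$ is only assumed minimizing among maps into $\CP^2$, only $A_2$ is available. The bound $d\le2$ comes from $8\pi\bigl(\int_0^1\sqrt{A_2}\bigr)^2<12\pi$.
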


\begin{proposition}
    \label{prop:step3}
If $u\colon\CP^1\to\CP^2$ is holomorphic and $d(u)=2$, then its
$0$-homogeneous extension $\hat u\colon\mb B^3\to\CP^2$ is not minimizing.
\end{proposition}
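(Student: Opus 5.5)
The plan is to exhibit an explicit competitor with strictly smaller energy in $\mb B^3$ than $\hat u$, which has energy $E_{\mb B^3}(\hat u)=E_{\mb S^2}(u)=8\pi$ by \eqref{eq:energyDegreeHolomorphic}. We may restrict to $\hat u$ that are stationary, since otherwise $\hat u$ is not minimizing. By Lemma \ref{lem:holomorphicHomogeneousRepresentation}, a degree-2 holomorphic map $\CP^1\to\CP^2$ is either linearly full, in which case after a $\tp{U}(3)$ change of target coordinates and a projective change $[P_0:P_1:P_2]$ it is a conic, or it lies in a projective line, in which case it is a degree-2 map into $\CP^1$. In the latter case the classical $n=1$ result \cite{Brezis1986} already shows that $\hat u$ is not minimizing, because minimizing tangent maps into $\mb S^2$ have degree $\pm1$ and a totally geodesic $\CP^1\subset\CP^2$ is a retract onto which the nearest-point projection does not increase energy near the image. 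So the main case is a full conic. Up to composition with $\tp{U}(3)$, which preserves energy, and a Möbius reparametrization of the domain, which is constrained by the balancing condition \eqref{eq:balancingCondition} of Proposition \ref{prop:stationarityHomogeneous}, the conic can be normalized as $u([z_0:z_1])=[z_0^2:\sqrt2\lambda z_0z_1:z_1^2]$ or a close variant. Stationarity forces the energy density to be balanced, and I expect that this pins the conic to be (up to isometries) the Veronese-type curve with a symmetry axis.

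For the competitor I would follow the Brezis--Coron--Lieb construction adapted to the geometry: in cylindrical or prolate spheroidal coordinates adapted to the symmetry axis, define $v$ equal to $\hat u$ outside a thin region, and inside interpolate using the projective dilations $F_t^a$ from \eqref{eq:Fta}. The key feature of a conic is that along one axis direction the degree can be "split" into two degree-one bubbles. The idea is to replace the single point singularity of degree $2$ by two singular points of degree $1$ at $\pm\delta e_3$, with the connecting map on each level being holomorphic of degree $2$ where possible, and then to compute $E(v)-E(\hat u)$ to first order in $\delta$. Prolate spheroidal coordinates $(\xi,\eta,\phi)$ with foci $\pm\delta e_3$ make the two-singularity ansatz natural. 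Here $\hat u$ is written in terms of $\eta$ alone at large distance, and $v$ uses a profile $[z_0^2:\sqrt2 z_0z_1 t(\xi):z_1^2]$-type deformation. Holomorphicity in the angular variables ensures that the tangential energy equals the degree term, and only the radial derivative contributes an excess.

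The energy change will be computed using stationarity of $\hat u$ to kill the first-order term in $\delta$. The sign of the second-order term then decides the matter, and I expect a strictly negative coefficient coming from the fact that the degree-one pieces cost $4\pi$ times the distance, in the spirit of the dipole energy $8\pi L$ versus the quadratic cost of a single degree-2 point. This explicit energy computation, choosing the profile and verifying negativity, is the main obstacle. I would try first the profile $t_\eps$ from the proofs of Propositions \ref{prop:step1}--\ref{prop:step2} and reduce everything to one-dimensional integrals in $\xi$.
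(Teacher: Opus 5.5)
Your heuristic is the right one: split the degree-two point into two degree-one points as in Brezis--Coron--Lieb, use prolate spheroidal coordinates, and keep the map holomorphic of degree two on each level set so that its angular energy is exactly $16\pi$. But the proof is not there. The explicit competitor and the strict energy inequality, which you defer as ``the main obstacle'', are the entire content of the proposition.

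Two of your working expectations are also wrong. First, stationarity does not reduce the conic to an axially symmetric curve $[z_0^2:\sqrt2\lambda z_0z_1:z_1^2]$. Write $u=[Ap]$ with $p$ the Veronese lift. Then $P(\omega):=|Ap(z)|^2=Q(\omega)+k\cdot\omega$ with $Q$ even, and $|\nabla u|^2=4+2\Delta_{\mb S^2}\log P$. So the balancing condition \eqref{eq:balancingCondition} forces $k=0$ and nothing more. Up to isometries this gives $u=\Sigma_D([z],[z])$ with $D=\mathrm{diag}(\sqrt a,\sqrt b,\sqrt c)$ and $0\le a\le b\le c$ (Lemma \ref{lem:StationarityCPDeg2}). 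This is a two-parameter family with no symmetry axis when $a<b<c$; your family covers only $a=b$ (or $b=c$). The family also contains the non-full case $a=0$. There you need no retraction argument, since a competitor into a projective line is already a competitor into $\CP^2$. Second, stationarity does not kill the first-order term, because splitting a singularity is not a smooth inner variation. By scaling, placing the two singular points at $\pm\delta e_3$ changes the energy by $\delta(R-8\pi)+o(\delta)$. The decisive quantity is this first-order renormalized coefficient, not a second-order term.

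The paper's competitor is the polarization $v(x)=\Sigma_D(\omega_+(x),\omega_-(x))$, with $\omega_\pm=(x\mp e_3)/|x\mp e_3|$. In prolate coordinates with foci $\pm e_3$ it equals $[D(\tanh\mu\,\xi_1,\tanh\mu\,\xi_2,\xi_3)]=F^{[e_3]}_{\tanh\mu}\circ u(\nu,\varphi)$. So a projective dilation in the prolate radial variable is indeed the right shape, but neither the direction nor the profile is free. The dilation is towards the eigendirection of the largest eigenvalue $c$. If you split along the $a$-eigendirection and $a=0$, the polarized map is undefined on the whole segment between the foci. The profile must be $\tanh\mu$, not the $t_\eps$ of Propositions \ref{prop:step1}--\ref{prop:step2}. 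It is exactly what identifies $v$ with the polarization, so $v$ is smooth away from $\pm e_3$ and, by the symmetry of $\Sigma_D$, even in $1/r$. This gives $v-\hat u=O(r^{-2})$ and $\nabla v-\nabla\hat u=O(r^{-3})$. The boundary data are then handled at infinity by Lemma \ref{lem:AsymptoticCompetitor}, not by gluing to $\hat u$ across a thin region.

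One then gets $E_{\Omega_T}(v)=8\pi(\cosh T-1)+R_T$ and $E_{\Omega_T}(\hat u)=8\pi\cosh T+o(1)$. Here $R_T$ contains $(\sinh\mu)^{-1}|\partial_\varphi v|^2$ in addition to $\sinh\mu\,|\partial_\mu v|^2$, so it is not only the radial derivative that contributes. Everything reduces to $R<8\pi$ (Lemma \ref{lem:EnergyCompR}). This is proved by writing $R=8\pi+2I$ and pairing $\varphi$ with $\varphi+\pi/2$, and it uses the ordering $a\le b\le c$ through $c^2\ge(1-y^2)(ab+(b-a)^2)$. None of these ingredients appear in your proposal.
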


\begin{proof}[Proof of Theorem \ref{thm:CP2Classification}]
By Proposition \ref{prop:step1}, the map $u$ is holomorphic or
anti-holomorphic; by composing on the right with an
orientation-reversing isometry, we can assume that we are in the former case.
 Since $u$ is nonconstant, $d(u)\geq1$.  Proposition \ref{prop:step2} gives
$d(u)\leq2$, while Proposition \ref{prop:step3} rules out $d(u)=2$,
therefore $d(u)=1$.  Lemma
\ref{lem:holomorphicHomogeneousRepresentation} gives
\[
    u([z])=[Az],
    \qquad A\in\C^{3\times2},
    \qquad \operatorname{rank}_{\C}A=2.
\]
Thus the image of $u$ is the projective line $\mb P(\operatorname{im}A)$.
Regarding $u$ as a map from $\CP^1$ to that projective line, the
classification of minimizing tangent maps into $\CP^1$ from
\cite{Brezis1986} shows that $u$ is an isometry.  The inclusion of a
projective line in $\CP^2$ is isometric, and hence $u$ is an isometric
embedding. 
\end{proof}

\subsection{The case of \texorpdfstring{$\CP^2$}{CP2}: the first two steps}

Both of the first two steps follow from the same energy estimate. We carry out the calculation for maps into $\CP^m$, with $m\geq2$, since
the estimate will also be used in the reduction to $\CP^2$.
Let $u\in C^\infty(\mb S^2;\CP^m)$ be a harmonic map such that $\hat u$
is minimizing.  Given $\varepsilon\in(0,1)$ and an absolutely continuous
profile $t_\varepsilon\colon[\varepsilon,1]\to[0,1]$ such that
$t_\varepsilon(\varepsilon)=0$, $t_\varepsilon(1)=1$, and
$t_\varepsilon>0$ on $(\varepsilon,1]$, define
\begin{equation}
    \label{eq:competitorCP2}
    v_{a,\varepsilon}(r\omega):=
    \begin{cases}
        a,&0\leq r\leq\varepsilon,\\
        F_{t_\varepsilon(r)}^a(u(\omega)),&\varepsilon<r\leq1.
    \end{cases}
\end{equation}
If $H_a:=\mb P(a^\perp)$, then $u^{-1}(H_a)$ has area zero for almost
every $a\in\CP^m$, by Fubini's theorem.  For such $a$,
$F_{t_\varepsilon(r)}^a(u(\omega))\to a$ for almost every $\omega$ as
$r\downarrow\varepsilon$.  Thus, whenever the annular energy is finite,
dominated convergence identifies the inner trace with the constant $a$.
Gluing to the constant map on $\mb B_\varepsilon$ then gives an admissible
competitor in $W^{1,2}(\mb B^3;\CP^m)$ with trace $u$ on
$\partial\mb B^3$.

\begin{proof}[Proof of Propositions \ref{prop:step1} and \ref{prop:step2}]
Choose the profile implicitly by
\begin{equation}
    \label{eq:profileStepOne}
    \int_0^{t_\varepsilon(r)}\sqrt{A_m(t)}\,dt
    =\frac{1-\varepsilon/r}{1-\varepsilon}
    \int_0^1\sqrt{A_m(t)}\,dt.
\end{equation}
The integral on the right is finite by Lemma \ref{lem:AmIntegralEstimate}.
Since $A_m(t)\geq A_m(1)>0$ for $0<t\leq1$, this defines an absolutely
continuous profile with the required endpoint values.  Differentiating
\eqref{eq:profileStepOne} gives
\[
    \sqrt{A_m(t_\varepsilon(r))}\,t_\varepsilon'(r)
    =\frac{\varepsilon}{(1-\varepsilon)r^2}
    \int_0^1\sqrt{A_m(t)}\,dt.
\]
On the annulus $\varepsilon<r<1$,
\[
    \partial_r v_{a,\varepsilon}
    =t_\varepsilon'(r)\,
    \partial_tF_{t_\varepsilon(r)}^a(u(\omega)),
    \qquad
    \nabla_{\mb S^2}v_{a,\varepsilon}
    =dF_{t_\varepsilon(r)}^a(\nabla_{\mb S^2}u).
\]
Thus, averaging the energy in $a$ and using Lemma \ref{lem:averageFt},
\[
\begin{aligned}
    \int_{\CP^m}E_{\mb B^3}(v_{a,\varepsilon})\,d\mu(a)
    &=(1-\varepsilon)E_{\mb S^2}(u)
      +8\pi\int_\varepsilon^1
      r^2A_m(t_\varepsilon(r))|t_\varepsilon'(r)|^2\,dr\\
    &=(1-\varepsilon)E_{\mb S^2}(u)
      +8\pi\frac{\varepsilon}{1-\varepsilon}
      \left(\int_0^1\sqrt{A_m(t)}\,dt\right)^2.
\end{aligned}
\]
In particular, the averaged energy is finite, so $v_{a,\varepsilon}$ is
an admissible competitor for almost every $a$.  By minimality of
$\hat u$, and since $E_{\mb B^3}(\hat u)=E_{\mb S^2}(u)$, we obtain
\begin{equation}
    \label{eq:oneDimensionalStepTwo}
    \varepsilon E_{\mb S^2}(u)
    \leq 8\pi\frac{\varepsilon}{1-\varepsilon}
    \left(\int_0^1\sqrt{A_m(t)}\,dt\right)^2.
\end{equation}
Dividing by $\varepsilon$ and letting $\varepsilon\to0$ gives
\begin{equation}
    \label{eq:upperEnergyCPn}
    E_{\mb S^2}(u)
    \leq8\pi\left(\int_0^1\sqrt{A_m(t)}\,dt\right)^2.
\end{equation}
Taking $m=2$ and applying Lemma \ref{lem:AmIntegralEstimate}, we obtain
$E_{\mb S^2}(u)<12\pi$.  Theorem
\ref{thm:harmonicSphereDichotomyCPn} therefore implies that $u$ is
holomorphic or anti-holomorphic, proving Proposition \ref{prop:step1}.
If $u$ is holomorphic, \eqref{eq:energyDegreeHolomorphic} gives
$4\pi d(u)=E_{\mb S^2}(u)<12\pi$, and hence $d(u)\leq2$.
This proves Proposition \ref{prop:step2}.
\end{proof}

\subsection{The case of \texorpdfstring{$\CP^2$}{CP2}: the third step}
We now turn to Proposition \ref{prop:step3}, which is much more intricate.  For $A \in \C^{3\times 3}$, define $\Sigma_A$ by the following formula wherever the vector inside the brackets is nonzero:
\begin{gather*}
\Sigma_A([z_0:z_1],[w_0:w_1])
:=
\left[
A\left(
\frac{z_0w_0-z_1w_1}{\sqrt{2}},
\frac{i(z_0w_0+z_1w_1)}{\sqrt{2}},
\frac{z_0w_1+z_1w_0}{\sqrt{2}}
\right)
\right].
\end{gather*}
Note the following immediate consequence of Lemma \ref{lem:holomorphicHomogeneousRepresentation}:

\begin{lemma}
    \label{lemma:VeroneseStandardForm}
    Suppose $u:\CP^1 \to \CP^2$ is holomorphic of degree $d(u)=2$. Then there exists a matrix $A \in \C^{3\times 3}$ such that
    \[
    u([z_0:z_1])=\Sigma_A([z_0:z_1],[z_0:z_1]).
    \]
\end{lemma}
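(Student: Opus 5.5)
Since $u$ is holomorphic with $d(u)=2$, Lemma \ref{lem:holomorphicHomogeneousRepresentation} gives homogeneous quadratic polynomials $P_0,P_1,P_2\in\C[z_0,z_1]$, with no common zero on $\C^2\setminus\{0\}$, such that $u([z])=[P_0(z):P_1(z):P_2(z)]$. The plan is to show that the three quadratics
\[
    q_0:=\frac{z_0^2-z_1^2}{\sqrt2},\qquad
    q_1:=\frac{i(z_0^2+z_1^2)}{\sqrt2},\qquad
    q_2:=\frac{2z_0z_1}{\sqrt2},
\]
which are exactly the entries of the vector inside $\Sigma_A([z],[z])$, form a basis of the three-dimensional space $\C[z_0,z_1]_2$ of homogeneous quadratics.

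To see this, express them in the monomial basis $(z_0^2,z_0z_1,z_1^2)$. One has $z_0^2=(q_0-iq_1)/\sqrt2$ and $z_1^2=-(q_0+iq_1)/\sqrt2$, while $z_0z_1=q_2/\sqrt2$. Hence the $q_j$ span $\C[z_0,z_1]_2$, and since there are three of them they form a basis.

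Consequently there is a unique matrix $A\in\C^{3\times3}$ with $(P_0,P_1,P_2)^T=A\,(q_0,q_1,q_2)^T$ as polynomials. Evaluating at $(z_0,z_1)$ gives
\[
    u([z])=[A\,q(z)]=\Sigma_A([z],[z]).
\]
The vector $A\,q(z)$ is exactly $(P_0(z),P_1(z),P_2(z))$, which never vanishes on $\C^2\setminus\{0\}$, so $\Sigma_A$ is well defined at every point of the diagonal.

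Two remarks complete the argument. First, $A$ is invertible: otherwise the $P_j$ would be linearly dependent, and the image of $u$ would lie in a projective line. A degree-two map into a line is still possible, so invertibility is not actually needed for the statement, and I would not claim it. Second, the argument contains no real obstacle; the only point requiring care is the linear-algebra check that the $q_j$ span $\C[z_0,z_1]_2$, together with the observation that the uniqueness of the tuple $(P_j)$ up to scaling in Lemma \ref{lem:holomorphicHomogeneousRepresentation} plays no role here.
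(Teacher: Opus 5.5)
Your argument is correct and is the same as the paper's. The paper gives no proof beyond calling the lemma an immediate consequence of Lemma \ref{lem:holomorphicHomogeneousRepresentation}, and your observation that the three quadratics $q_j$ form a basis of $\C[z_0,z_1]_2$, so that $(P_0,P_1,P_2)^T=A\,q$, is exactly the omitted step.

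One correction: delete the opening claim of your last paragraph that $A$ is invertible. It is false in general; for example $[z_0^2:z_1^2:0]$ is holomorphic of degree two with $\operatorname{rank}A=2$. Only $\operatorname{rank}A\geq2$ holds, as the paper later uses in Lemma \ref{lem:StationarityCPDeg2}.
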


For $A=\Id$, $u$ is called the Veronese embedding. In the following we identify $\CP^1 \cong \mb S^2$; if the precise identification matters, we will specify it.\\
The following lemma allows us to reduce from $\Sigma_A$ to $\Sigma_D$ for a real diagonal $D \in \mb R^{3\times 3}$ in the proof of Proposition \ref{prop:step3}.
\begin{lemma}
    \label{lem:StationarityCPDeg2}
    Suppose $u:\mb S^2 \to \CP^2$ is a holomorphic map of degree $2$ such that $\hat u$ is stationary. Then, up to isometries of $\mb S^2$ and $\CP^2$ we have
    \[
    u([z])=\Sigma_D([z],[z]),
    \]
    where $D=\mathrm{diag}(\sqrt{a},\sqrt{b},\sqrt{c})$, with $0\leq a \leq b\leq c$ and $b,c>0$.
\end{lemma}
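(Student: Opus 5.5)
The plan is to use the $\mathrm{SO}(3)$-symmetry hidden in the normalization of $\Sigma$. We reduce the matrix $A$ from Lemma \ref{lemma:VeroneseStandardForm} to the Hermitian matrix $M:=A^*A$, and then use the balancing condition \eqref{eq:balancingCondition} to show that $M$ is real.

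\textbf{Step 1: symmetries.} Write $V:=\Sigma_{\Id}$ for the Veronese map, with lift $v(z)=\bigl(z_0^2-z_1^2,\,i(z_0^2+z_1^2),\,2z_0z_1\bigr)/\sqrt2$. A direct check shows $v_1^2+v_2^2+v_3^2=0$, so $V$ parametrizes the conic $\{\sum Z_j^2=0\}$. The map $\mathrm{SU}(2)\to\mathrm{SO}(3)$ is realized by real orthogonal matrices acting on $\C^3$ and preserving this quadratic form. Hence $V$ is equivariant: for each rotation $R$ of $\mb S^2\cong\CP^1$ there is $O\in\mathrm{SO}(3)$ with $V\circ R=[O\,V]$. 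Consequently, replacing $A$ by $UAO$ with $U\in\tp{U}(3)$ and $O\in\mathrm{SO}(3)$ changes $u$ only by isometries of $\CP^2$ and of $\mb S^2$. Rescaling $A$ does not change $u$ at all. By polar decomposition, $A=U(A^*A)^{1/2}$, so $u$ is determined up to target isometry by $M=A^*A$, and $M$ may be changed by $M\mapsto O^TMO$. It therefore suffices to show that $M$ is \emph{real} symmetric. Indeed, we can then write $M=O^TD^2O$ with $O\in\mathrm{SO}(3)$ (after reversing an eigenvector if needed) and $D=\mathrm{diag}(\sqrt a,\sqrt b,\sqrt c)$ ordered, giving $A=UDO$. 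Since $d(u)=2$ forces the image not to lie in a line, we also get $\operatorname{rank}A\geq2$, that is, $b,c>0$.

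\textbf{Step 2: the balancing condition in terms of $M$.} Since $u$ is holomorphic, the proof of Lemma \ref{lem:holomorphicHomogeneousRepresentation} gives
\[
|du|^2\,dV=u^*\omega_\FS=\Delta\log|Av|^2\,dx\wedge dy .
\]
This holds up to a term $\Delta\log|v|^2$ (the Veronese energy density, which is constant), and the $\log$ of a conformal factor, both of which are harmless after normalization. More invariantly, $|du|^2=|dV|^2-\Delta_{\mb S^2}\log\bigl(\hat v^*M\hat v\bigr)$, with $\hat v=v/|v|$ the unit lift of $V$ and $|dV|^2$ constant. Integrating against the first eigenfunctions $\omega_j$, which satisfy $\Delta\omega_j=-2\omega_j$, turns \eqref{eq:balancingCondition} (required by Proposition \ref{prop:stationarityHomogeneous}) into
\[
\int_{\mb S^2}\log\bigl(\hat v(\omega)^*M\hat v(\omega)\bigr)\,\omega\,dV(\omega)=0 .
\]

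\textbf{Step 3 (main obstacle): parity.} Write $M=S+iK$ with $S$ real symmetric and $K$ real antisymmetric, and let $k\in\R^3$ be the vector with $Kx=k\times x$. At the point $\omega$, one has $\hat v=(x+iy)/\sqrt2$ with $x,y$ real orthonormal and $x\times y=\pm\omega$; the sign is a fixed convention, which I would verify. Then
\[
\hat v^*M\hat v=\tfrac12(\tr S-\omega^TS\omega)\pm k\cdot\omega=:\alpha(\omega)+\beta(\omega),
\]
where $\alpha$ is even, $\beta$ is odd and linear, and $\alpha\pm\beta>0$ wherever $u$ is defined. Pairing the balancing identity with $k$ and symmetrizing under $\omega\mapsto-\omega$ gives
\[
\int_{\{k\cdot\omega>0\}}(k\cdot\omega)\,\log\frac{\alpha+|k\cdot\omega|}{\alpha-|k\cdot\omega|}\,dV=0 .
\]
The integrand is strictly positive when $k\neq0$, so $k=0$ and $M$ is real, completing Step 1.

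The delicate points are getting the exact identity $\hat v^*(iK)\hat v=\pm k\cdot\omega$, and checking that the log formula for $|du|^2$ holds on all of $\mb S^2$. The singular set where $\hat v^*M\hat v=0$ must be handled: it is empty when $\operatorname{rank}A=3$, and when $\operatorname{rank}A=2$ it is finite, with $\log$ integrable, so the integration by parts survives.
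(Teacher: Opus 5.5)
This is essentially the paper's proof: you write $A^*A=S+i[k]_\times$, obtain $|A\hat v|^2=\tfrac12(\tr S-\omega^TS\omega)\pm k\cdot\omega$, turn the balancing condition into $\int_{\mb S^2}\omega\log P=0$ via $\Delta_{\mb S^2}\omega=-2\omega$, and kill $k$ by the even/odd pairing; you then diagonalize with $\mathrm{SO}(3)$ and finish by polar decomposition, and your explicit Veronese equivariance justifies the domain isometry that the paper uses implicitly. Two small corrections: degree two does not keep the image off a line (rank $A=2$, i.e.\ $a=0$, gives a degree-two map onto a projective line, which the lemma allows), so $\operatorname{rank}A\geq2$ should instead come from $Av(z)\neq0$ for $z\neq0$, and this same fact (the degree-two representative has no common zero) shows that $\hat v^*M\hat v$ never vanishes, so there is no singular set at all --- which matters, since a zero would put a Dirac mass into $\Delta_{\mb S^2}\log P$ and the integration by parts would not survive as you claim.
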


\begin{proof}
By Lemma \ref{lemma:VeroneseStandardForm}, there exists
$A\in\C^{3\times 3}$ such that $u([z])=\Sigma_A([z],[z])$. We define
\[
H:=A^*A,
\]
and write this matrix as
\begin{equation}
    \label{eq:SplittingForH}
    H=S+i[k]_\times,
\end{equation}
where $S\in\R^{3\times 3}$ is symmetric and
$[k]_\times x=k\times x$. Let $p(z)$ be the triple defining $\Sigma_{\Id}([z],[z])$, so
\[
p(z_0,z_1)=\frac{1}{\sqrt{2}}(z_0^2-z_1^2,i(z_0^2+z_1^2),2z_0z_1),\quad (z_0,z_1)\in \C^2\setminus \{0\}.
\]
Thus, $Ap(z_0,z_1)$ is a holomorphic lift of $u$, and since $u$ is everywhere defined on $\CP^1\cong S^2$, we have that $Ap(z_0,z_1)\neq 0$ on $\mb C^2\setminus \{0\}$. This implies easily that 
\begin{equation}
    \label{ineq:rkGreater2}
    \mathrm{rank }A\geq 2.
\end{equation}
For unit representatives $z\in\mb S^3$, we identify \(\mathbb{CP}^1\cong\mathbb S^2\) via the Hopf map 
\[
\omega=h(z)=(-2\Re(\bar z_0 z_1),-2\Im(\bar z_0z_1),|z_0|^2-|z_1|^2).
\]
With this identification, a simple computation shows that
\begin{equation}
    \label{eq:FormulaLift}
    p(z)p(z)^*
    =\frac12\bigl(I-\omega\otimes\omega+i[\omega]_\times\bigr).
\end{equation}
Since 
\[
P(\omega):=|Ap(z)|^2=p(z)^*Hp(z)=\tr(Hpp^*),
\]
we derive from \eqref{eq:FormulaLift} and \eqref{eq:SplittingForH} that
\begin{equation}
    \label{eq:FormulaforP}
    P(\omega)=\frac12\big(\tr S-\omega^TS\omega\big)+k\cdot\omega
    =:Q(\omega)+k\cdot\omega,
\end{equation}
where $Q$ is even. Since $|Ap(z)|\neq 0$ for $z \in \C^2 \setminus\{0\}$ we have
\begin{equation}
    \label{QLowerBound}
    Q(\omega)>\lvert k\cdot\omega\rvert
\qquad\text{for all }\omega\in\mathbb S^2.
\end{equation}
Next we claim that
\begin{equation}
    \label{formulaNormIntermsOfLaplace}
    |\nabla u|^2=4+2\Delta_{\mathbb S^2}\log P.
\end{equation}
Indeed, let the affine coordinate be $\zeta=z_1/z_0$ and choose $z=(1,\zeta)/\sqrt{1+|\zeta|^2}$. If $F(\zeta)=Ap(1,\zeta)$, then the
quadratic homogeneity of $p$ gives
\[
|F(\zeta)|^2=(1+|\zeta|^2)^2P(\omega).
\]
Using $\omega_{\CP^2}=2i\p\bar\p\log|Z|^2$ and
$\omega_{\mb S^2}=2i\p\bar\p\log(1+|\zeta|^2)$, together with
$i\p\bar\p f=\frac12(\Delta_{\mb S^2}f)\omega_{\mb S^2}$, we obtain
\begin{equation}
    \label{eq:PullBackForm}
    u^*\omega_{\CP^2}
    =2i\p\bar\p\log|F|^2
    =(2+\Delta_{\mb S^2}\log P)\omega_{\mb S^2}.
\end{equation}
Since $u$ is holomorphic, $|\nabla u|^2\omega_{\mb S^2}=2u^*\omega_{\mathbb{CP}^2}$,
and hence \eqref{eq:PullBackForm} implies \eqref{formulaNormIntermsOfLaplace}.\\
As $\hat u$ is stationary, we have from Proposition  \ref{prop:stationarityHomogeneous} that
\[
\int_{\mb S^2}\omega |\nabla_{\mb S^2} u|^2d\omega=0.
\]
Inserting \eqref{formulaNormIntermsOfLaplace}
and using that $\Delta_{\mathbb S^2}\omega=-2\omega$ we have
\[
0=\int_{\mathbb S^2}\omega\log P(\omega)\,d\omega.
\]
Pairing $\omega$ with $-\omega$ and taking the scalar product with $k$,
we obtain
\[
0=
\frac12\int_{\mathbb S^2}
(k\cdot\omega)
\log\left(
\frac{Q(\omega)+k\cdot\omega}
     {Q(\omega)-k\cdot\omega}
\right)d\omega.
\]
From \eqref{QLowerBound} the integrand is nonnegative and vanishes only when $k \cdot \omega =0$.
Thus $k=0$, so $H$ is real symmetric.\\
Choose $R \in SO(3)$ such that
\[
R^THR=\mathrm{diag}(a,b,c),
\quad 0\le a\le b\le c.
\]
Hence, after precomposing
with a domain isometry, we may assume
\[
A^*A=\mathrm{diag}(a,b,c)=D^2,
\quad
D:=\mathrm{diag}(\sqrt a,\sqrt b,\sqrt c).
\]
By polar decomposition, there exists \(U\in U(3)\) such that \(UA=D\).
After applying the corresponding target isometry, we obtain
\[
u([z])=\Sigma_D([z],[z]).
\]
Finally, we obtain from \eqref{ineq:rkGreater2} that $b,c>0$. This completes the proof. 
\end{proof}

For the rest of this subsection, we identify $\CP^1\cong\mb S^2$ via
\[
[z_0:z_1]\mapsto(2\Re(\bar z_0z_1),2\Im(\bar z_0z_1),|z_0|^2-|z_1|^2),
\]
where $|z_0|^2+|z_1|^2=1$. Since $p(z_0,-z_1)=\mathrm{diag}(1,1,-1)p(z_0,z_1)$, this change of identification is absorbed by a target isometry in the normal form above.

\begin{lemma}
    \label{lem:AsymptoticCompetitor}
Let $u\in C^\infty(\mb S^2;N)$ be a harmonic map such that $\hat u$ is minimizing. Let \(v\in W^{1,2}_{\mathrm{loc}}(\R^3;N)\) and suppose that there are constants
\(C_0,R_0>0\) such that
\begin{equation}\label{eq:decay}
    |v(x)-\widehat u(x)|\leq C_0|x|^{-2},
    \qquad
    |Dv(x)-D\widehat u(x)|\leq C_0|x|^{-3}
\end{equation}
for almost every \(x\in \R^3\setminus B_{R_0}\). Then 
\begin{equation}
    \label{eq:liminfEqualityEnergy}
    \liminf_{R\to\infty}
    \bigl(E_{B_R}(v)-E_{B_R}(\widehat u)\bigr)\geq 0.
\end{equation}
\end{lemma}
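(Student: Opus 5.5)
The natural approach is to compare $v$ on $B_R$ with a competitor that agrees with $\hat u$ on $\partial B_R$, built by interpolating between $v$ and $\hat u$ in a thin shell near $\partial B_R$, and then to invoke the minimality of $\hat u$ on $B_{R+1}$ (or a rescaled ball). Concretely, for $R\geq 2R_0$ I will define $w_R$ equal to $v$ on $B_R$, equal to $\hat u$ outside $B_{2R}$, and, on the annulus $B_{2R}\setminus B_R$, equal to $\Pi\bigl(\hat u+\chi_R(v-\hat u)\bigr)$. Here $\chi_R(x)=\chi(|x|/R)$ is a cutoff with $\chi=1$ on $[0,1]$, $\chi=0$ on $[2,\infty)$ and $|\chi_R'|\lesssim R^{-1}$, and $\Pi$ is the nearest-point projection onto $N$, which is smooth on a tubular neighbourhood $U_\delta$ of $N$. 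By \eqref{eq:decay}, for $R$ large the point $\hat u+\chi_R(v-\hat u)$ lies within $C_0R^{-2}<\delta$ of $N$, so $w_R\in W^{1,2}_{\mathrm{loc}}(\R^3;N)$ is well defined. (Using a shell of width comparable to $R$ rather than width $1$ keeps the cutoff gradient small; either choice should work given the decay rates.)

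Since $w_R=\hat u$ near $\partial B_{2R}$, minimality of $\hat u$ on $B_{2R}$ gives $E_{B_{2R}}(\hat u)\le E_{B_{2R}}(w_R)$, that is,
\[
E_{B_R}(v)-E_{B_R}(\hat u)\geq -\bigl(E_{B_{2R}\setminus B_R}(w_R)-E_{B_{2R}\setminus B_R}(\hat u)\bigr).
\]
The remaining step is to show the bracket on the right tends to $0$. On the annulus I write $Dw_R=D\Pi(\hat u+\chi_R(v-\hat u))\bigl[D\hat u+\chi_R(Dv-D\hat u)+(v-\hat u)\otimes D\chi_R\bigr]$. I then compare this with $D\hat u=D\Pi(\hat u)D\hat u$, using the Lipschitz bound $|D\Pi(y)-D\Pi(\hat u)|\lesssim|y-\hat u|\lesssim R^{-2}$. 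This yields the pointwise bound
\[
|Dw_R-D\hat u|\lesssim R^{-2}|D\hat u|+R^{-3}+R^{-2}\cdot R^{-1}\lesssim R^{-3},
\]
where I use $|D\hat u(x)|\le C|x|^{-1}$, which holds because $u$ is smooth. Consequently
\[
\bigl||Dw_R|^2-|D\hat u|^2\bigr|\lesssim |D\hat u|\,R^{-3}+R^{-6}\lesssim R^{-4}
\]
on the annulus. Integrating over a region of volume $\sim R^3$ gives a difference of order $R^{-1}\to0$. Taking $\liminf_{R\to\infty}$ then yields \eqref{eq:liminfEqualityEnergy}.

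The main obstacle is minor bookkeeping. One must ensure that the interpolation stays in the tubular neighbourhood (guaranteed for large $R$ by the $|x|^{-2}$ decay) and that the a.e. hypotheses suffice for the Sobolev chain rule with the smooth map $\Pi$, which is standard. The $|x|^{-3}$ rate for the derivative is only needed to make the cross term $|D\hat u|\,|Dw_R-D\hat u|$ integrate to $o(1)$. Any rate faster than $|x|^{-2}$ would suffice, so the argument has room to spare.
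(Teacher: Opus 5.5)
Your proposal is correct and is essentially the paper's proof. The paper rescales $v_R(x)=v(Rx)$ and interpolates on $B_1\setminus B_{1/2}$ via $\exp_{\hat u}(\chi\exp_{\hat u}^{-1}(v_R))$, which gives an $O(R^{-2})$ energy error at unit scale and $O(R^{-1})$ after scaling back; this is exactly your unscaled shell of width comparable to $R$, with the nearest-point projection $\Pi$ playing the role of the exponential map, and working directly at scale $R$ only needs minimality of $\hat u$ on $B_{2R}$, which follows from minimality on $\B^3$ by $0$-homogeneity.
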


\begin{proof}
    Let $R>2R_0$. Denote $v_R(x):=v(Rx)$. From \eqref{eq:decay} we have
    \begin{equation}
        \label{eq:AnnulusBound}
        \|v_R-\hat u\|_{W^{1,\infty}(B_1\setminus B_{1/2})}\leq C R^{-2}.
    \end{equation}
    We pick a cutoff $\chi \in C_c^\infty(B_1)$ such that $\chi \equiv 1$ in an open neighborhood of $\overline {B_{1/2}}$. For $R$ large enough we define
    \[
    w_R(x)=\begin{cases}
        v_R(x),\quad \text{in }B_{1/2},\\
        \exp_{\hat u(x)}(\chi(x)\exp_{\hat u(x)}^{-1}(v_R(x))),\quad \text{in }B_1\setminus B_{1/2}.
    \end{cases}
    \]
    Note that $w_R \in W^{1,2}(B_1;N)$ and it agrees with $\hat u$ on $\p B_1$, hence $w_R$ is an admissible competitor for $\hat u$. Moreover, from \eqref{eq:AnnulusBound} together with the smoothness of the exponential map we have
    \begin{equation}
        \label{energyAnnulusofwR}
        E_{B_1\setminus B_{1/2}}(w_R)=E_{B_1\setminus B_{1/2}}(\hat u)+O(R^{-2}).
    \end{equation}
    Minimality of $\hat u$ implies
    \[
    E_{B_1}(\hat u)\leq E_{B_1}(w_R).
    \]
    Splitting $B_1=B_{1/2}\cup (B_{1}\setminus B_{1/2})$, using that $w_R=v_R$ in $B_{1/2}$ and \eqref{energyAnnulusofwR}, we obtain that
    \begin{equation}
        E_{B_{1/2}}(v_R)-E_{B_{1/2}}(\hat u)\geq -CR^{-2}.
    \end{equation}
    Equation \eqref{eq:liminfEqualityEnergy} now follows easily by rescaling.
\end{proof}

\begin{remark}
    Before we prove Proposition \ref{prop:step3}, let us give some motivation for the competitor and the coordinates used. If $\hat u$ is minimizing, Lemma \ref{lem:StationarityCPDeg2} allows us to suppose that
    \[
    \hat u(x)=\Sigma_{D}\left(\frac{x}{|
    x|},\frac{x}{|x|}\right),\quad  x \in \R^3\setminus \{0\}.
    \]
    Next we use an idea following the proof of Theorem 7.4 in \cite{Brezis1986}: We split the degree $2$ singularity of $\hat u$ at the origin into $2$ degree $1$ singularities. A simple Ansatz for achieving this is to define the competitor
    \[
    v(x)= \Sigma_{D}\left(\frac{x-e_3}{|
    x-e_3|},\frac{x+e_3}{|x+e_3|}\right),\quad  x \in \R^3\setminus \{e_3,-e_3\}.
    \]
    In the proof we will see that the asymptotics as $|x| \to \infty$ of $v-\hat u$ are favorable, so due to Lemma \ref{lem:AsymptoticCompetitor} we do not have to worry about correcting the boundary data.\\
    To compute the energy of $v$, we work in \textit{prolate spheroidal coordinates}, with foci at $\pm e_3$, hence exactly at the singularities of $v$. These coordinates are defined as
    \[
    \begin{cases}
        x_1=\sinh \mu \sin \nu\cos\varphi\\
        x_2=\sinh \mu \sin \nu\sin \varphi\\
        x_3=\cosh \mu \cos \nu,
    \end{cases}
    \]
    where $\mu >0$, $0<\nu <\pi$, $0\leq \varphi <2\pi$. Let us recall some basic facts. First, the Euclidean metric in prolate spheroidal coordinates is given as
    \begin{equation}
        \label{eq:MetricInprolate}
        g_{\R^3}=(\sinh^2\mu +\sin^2\nu)(d\mu^2+d\nu^2)+\sinh^2\mu\sin^2\nu d\varphi^2.
    \end{equation}
    Second, for any $\alpha >0$ the level sets
    \[
    \{\mu =\alpha\}=\{x\in \R^3:|x-e_3|+|x+e_3|=2\cosh \alpha\},
    \]
    are prolate ellipsoids, which enclose the points $\pm e_3$. As $\alpha \downarrow 0$ the ellipsoids degenerate to the segment $[e_3,-e_3]$.\\
    Therefore, for any $\alpha >0$, $v|_{{\{\mu=\alpha\}}}:\mb S^2 \to \CP^2$, viewed in the angular variables $(\nu,\varphi)$ on the standard round sphere, is holomorphic of degree $2$. Thus, the main advantage of working in these coordinates is that for any $\alpha>0$, the angular energy of $v|_{{\{\mu=\alpha\}}}$ can be computed directly from \eqref{eq:energyDegreeHolomorphic}. The computationally difficult part is to show that the remaining part of the energy is sufficiently small. This is carried out in Lemma \ref{lem:EnergyCompR}.
\end{remark}

\begin{proof}[Proof of Proposition \ref{prop:step3}]
    Suppose for the sake of contradiction that $\hat u$ is minimizing. In particular, we have that $\hat u$ is stationary, so by combining Lemma \ref{lemma:VeroneseStandardForm} and Lemma \ref{lem:StationarityCPDeg2}, we can suppose that
    \[
    u([z])=\Sigma_D([z],[z]),
    \]
    where $D=\mathrm{diag}(\sqrt{a},\sqrt{b},\sqrt{c})$, with $0\leq a \leq b\leq c$ and $b,c>0$. \\
    Denote
    \[
        \omega_{\pm}(x):=\frac{x\mp e_3}{|x\mp e_3|}.
    \]
    The Ansatz for the competitor is 
    \begin{equation}
        \label{eq:AnsatzComp}
        v(x):=\Sigma_D(\omega_+(x),\omega_-(x)),\quad x\in \R^3\setminus \{e_3,-e_3\},
    \end{equation}
    where we identified $\mb S^2 \cong \CP^1$ in each argument of $\Sigma_D$.\\
    \textbf{Step 1: Coordinate Expression.} Let $(\mu,\nu,\varphi)$ be the prolate spheroidal coordinates introduced above. For $\omega \in S^2$ we write
    \[
    \omega=(\sin \nu \cos \varphi,\sin \nu \sin \varphi,\cos \nu),
    \]
    and let
    \begin{equation}
        \label{def:XiProj}
        \xi(\nu,\varphi):=(\cos \nu \cos\varphi-i\sin\varphi,\cos\nu\sin\varphi+i\cos\varphi,\sin\nu).
    \end{equation}
    We claim that in these coordinates
    \begin{equation}
        \label{eq:CoordinateVeps}
        v(\mu,\nu,\varphi)=\left[ D(s\xi_1,s\xi_2,\xi_3)\right],\quad s:=\tanh \mu.
    \end{equation}
    Let us denote by $\theta_{\pm}$ the polar angles of $\omega_\pm$ from the $x_3$-axis. A direct computation using the definition of prolate spheroidal coordinates shows that
    \begin{equation}
        \label{eq:FormPolarAngompm}
        \tan\frac{\theta_+}{2}=\coth \frac{\mu}{2}\tan \frac{\nu}{2},\quad \tan\frac{\theta_-}{2}=\tanh \frac{\mu}{2}\tan \frac{\nu}{2}.
    \end{equation}
    A possible Hopf lift of $\omega_\pm$ is given by
    \[
    q_{\pm}=\left(\cos\frac{\theta_\pm}{2},e^{i\varphi}\sin\frac{\theta_\pm}{2}\right) \in \mb S^3.
    \]
    In particular, by \eqref{eq:FormPolarAngompm} we can use the homogeneous representatives
    \[
    q_+=\left(1,e^{i\varphi}\frac{\tan(\nu/2)}{\tanh(\mu/2)}\right),\quad q_-=\left(1,e^{i\varphi}\tan\frac\nu 2\tanh\frac{\mu}{2}\right).
    \]
    Inserting the above into the definition of $\Sigma_D$, we obtain
    \begin{align*}
        &\Sigma_D([q_+],[q_-])=\\
        &\left[\sqrt{a}(1-e^{2i\varphi}\tan^2(\nu/2)):\sqrt{b}i(1+e^{2i\varphi}\tan^2(\nu/2)):\sqrt{c}e^{i\varphi}\tan\frac{\nu}{2}\left(\tanh\frac{\mu}{2}+\frac{1}{\tanh\frac{\mu}{2}}\right)\right] \\
        &=\left[\sqrt{a}(1-e^{2i\varphi}\tan^2(\nu/2)):\sqrt{b}i(1+e^{2i\varphi}\tan^2(\nu/2)):\sqrt{c}\frac{2e^{i\varphi}\tan\frac{\nu}{2}}{s}\right] \\
        &=\left[\sqrt{a}s(1-e^{2i\varphi}\tan^2(\nu/2)):\sqrt{b}si(1+e^{2i\varphi}\tan^2(\nu/2)):\sqrt{c}2e^{i\varphi}\tan\frac{\nu}{2}\right],
    \end{align*}
    where we used the identity
    \[
    \tanh (\mu/2)+\frac{1}{\tanh (\mu/2)}=\frac{2}{s},
    \]
    in the second equality. A simple computation shows that
    \[
    \begin{cases}
        1-e^{2i\varphi}\tan^2(\nu/2)=(1+\tan^2(\nu/2))e^{i\varphi}\xi_1\\
        i(1+e^{2i\varphi}\tan^2(\nu/2))=(1+\tan^2(\nu/2))e^{i\varphi}\xi_2\\
        2e^{i\varphi}\tan(\nu/2)=(1+\tan^2(\nu/2))e^{i\varphi}\xi_3,
    \end{cases}
    \]
    which, after dividing by the common factor, immediately implies \eqref{eq:CoordinateVeps}.\\
    \textbf{Step 2: Regularity} The goal here is to show that $v \in W^{1,2}_{loc}(\R^3;\CP^2)$. The vector in \eqref{eq:CoordinateVeps} is nonzero away from the foci: its third component is nonzero for $0<\nu<\pi$, while at $\nu=0,\pi$ we have $s>0$, $|\xi_2|=1$ and $b>0$. The open axis between the two foci can be written as
    \[
    L:=\{\mu=0,0<\nu<\pi\}=\{(0,0,x_3):-1<x_3<1\}.
    \]
    On $L$, $v$ equals $[0:0:1]$ constantly. Suppose now that $p \in L$. In an open neighborhood around $p$ we have that $\sqrt{c}\xi_3=\sqrt{c}\sin\nu\neq 0$. Hence, we can write $v$ in affine coordinates on this neighborhood as
    \[
    v=(W_1,W_2):=\left(\frac{\sqrt{a}s\xi_1}{\sqrt{c}\xi_3},\frac{\sqrt{b}s\xi_2}{\sqrt{c}\xi_3}\right).
    \]
    From the definition of $\xi$ we obtain
    \[
    W_1= \sqrt{\frac{a}{c}}\frac{\cos \nu x_1 - i x_2}{\cosh \mu \sin^2 \nu},\quad W_2 =\sqrt{\frac{b}{c}}\frac{\cos \nu x_2+i x_1}{\cosh \mu \sin^2 \nu}.
    \]
    By the equalities
    \[
    \cosh\mu =\frac{|x-e_3|+|x+e_3|}{2},\quad \cos \nu =\frac{|x+e_3|-|x-e_3|}{2},
    \]
    the functions $W_1$, $W_2$ are smooth on this coordinate neighborhood. In particular the only possible singularities of the map are $\pm e_3$.\\
    Next we prove that $v \in W^{1,2}$ in a neighborhood of $e_3$. We first claim that
    \[
    \Sigma_D:\CP^1 \times U \to \CP^2,
    \]
    is smooth where $U$ is an open neighborhood of $[1:0]$. Indeed we use the lift
    \[
    \Sigma_D([z],[w])=\left[
D\left(
\frac{z_0w_0-z_1w_1}{\sqrt{2}},
\frac{i(z_0w_0+z_1w_1)}{\sqrt{2}},
\frac{z_0w_1+z_1w_0}{\sqrt{2}}
\right)
\right]=:[Dp(z,w)].
    \]
    For the smoothness, it is enough to show that $Dp(z,(1,0))\neq 0$ for all $z \in \C^2\setminus\{0\}$. But since
    \[
    Dp(z,(1,0))=\frac{1}{\sqrt{2}}(\sqrt{a}z_0,i\sqrt{b}z_0,\sqrt{c}z_1),
    \]
    and $b,c>0$, the vector above cannot vanish for any $z\neq0$, which proves the smoothness. Now note that if we pick a small enough ball $B_\rho(e_3)$, $\omega_-|_{B_{\rho}(e_3)}$ takes values in $U$. Hence $v \in W^{1,2}(B_{\rho}(e_3))$ follows immediately from the chain rule. The argument for $-e_3$ is completely analogous.\\
    \textbf{Step 3: Energy computation} 
    Recall that for any $\alpha>0$, the angular map $(\nu,\varphi)\mapsto v(\alpha,\nu,\varphi)$ on the standard round sphere is holomorphic of degree two and hence by \eqref{eq:energyDegreeHolomorphic}, we have
    \begin{equation}
        \label{eq:SphericalEnergyv}
        \int_{\mb S^2}|\nabla_{\mb S^2}v|^2d\omega =16\pi.
    \end{equation}
    Denote $\Omega_T:=\{\mu < T\}$. From \eqref{eq:MetricInprolate}, we can write the energy for $v$ in prolate coordinates as
    \begin{equation}
        \label{eq:energyInProlateCoords}
        E_{\Om_T}(v)=\frac{1}{2}\int_0^T\sinh\mu\left(\int_{\mb S^2}|\nabla_{\mb S^2}v|^2d\omega\right)d\mu +R_T,
    \end{equation}
    where
    \[
    R_T=\frac12\int_{0}^T\int_{\mb S^2}\left(\sinh \mu|\p_\mu v|^2+\frac{1}{\sinh\mu}|\partial_{\varphi}v|^2\right)d\omega d\mu.
    \]
    If we combine \eqref{eq:energyInProlateCoords} with \eqref{eq:SphericalEnergyv}, we have
    \begin{equation}
        \label{energyofvinEllipsoid}
        E_{\Om_T}(v)=8\pi(\cosh T-1)+R_T,
    \end{equation}
    Let us write $R:=\lim_{T \to \infty}R_T$. Then we obtain from Lemma \ref{lem:EnergyCompR} below that
    \begin{equation}
        \label{Rlesseightpi}
        R<8\pi.
    \end{equation}
    \textbf{Step 4: Conclusion} Since $\Omega_T$ is an ellipsoid with semi-axes $\sinh T$, $\sinh T$ and $\cosh T$, its radial function $\rho_T$ satisfies
    \[
    \rho_T(\omega)=\cosh T +O(e^{-T}),
    \]
    uniformly for $\omega \in S^2$. Hence, we have
    \begin{equation}
        \label{eq:EnergyOfuinEllipsoid}
        E_{\Om_T}(\hat u)=8\pi\cosh T+o(1).
    \end{equation}
    Combining \eqref{energyofvinEllipsoid} with \eqref{eq:EnergyOfuinEllipsoid} we derive
    \begin{equation}
        \label{eq:EnergyDiffUnscaled}
        \lim_{T \to\infty}(E_{\Om_T}(v)-E_{\Om_T}(\hat u))=R-8\pi<0.
    \end{equation}
    Recall the definition of $v$ in \eqref{eq:AnsatzComp}. We write in polar coordinates $x=r\omega$ and denote $t:=r^{-1}$. Let us define
    \[
    \Psi(t,\omega):=\Sigma_D\left(\frac{\omega-te_3}{|\omega-te_3|},\frac{\omega+te_3}{|\omega+te_3|}\right) =v(r\omega).
    \]
    With a similar argument as in Step 2, we can pick $t_0>0$ small such that $\Psi$ is smooth on the compact set $[-t_0,t_0]\times \mb S^2$. Moreover, note that $\Psi(0,\omega)=\hat u(r\omega)$ and $\Psi(\cdot,\omega)$ is even in $t$, by the symmetry of $\Sigma_D$ with respect to its arguments. Hence, the first derivative with respect to $t$ vanishes at $t=0$, and Taylor's theorem gives
    \begin{equation}
        \label{eq:DifferenceOfvandhatu}
        v(r\omega)-\hat u(r\omega)=O(t^2)=O(r^{-2}),\quad\nabla v(r\omega)-\nabla\hat u(r\omega)=O(r^{-3}) ,\quad \text{as }r\to \infty
    \end{equation}
    uniformly in $\omega$. In particular, this implies that the limit in \eqref{eq:EnergyDiffUnscaled} is independent of the exhaustion so
    \begin{equation}
        \label{eq:EnergyDiffBalls}
        \lim_{r \to \infty}(E_{B_r}(v)-E_{B_r}(\hat u))=R-8\pi<0.
    \end{equation}
    From \eqref{eq:DifferenceOfvandhatu}, we can apply Lemma \ref{lem:AsymptoticCompetitor}. Whence,
    \[
        \lim_{r \to \infty}(E_{B_r}(v)-E_{B_r}(\hat u))\geq 0,
    \]
    which contradicts \eqref{eq:EnergyDiffBalls}. This completes the proof.
\end{proof}

\begin{lemma}
    \label{lem:EnergyCompR}
    Using the notation of the proof of Proposition \ref{prop:step3}, we have
    \[
    R<8\pi.
    \]
\end{lemma}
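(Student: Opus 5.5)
The plan is to compute $R$ explicitly from the coordinate formula \eqref{eq:CoordinateVeps}, reduce it to a finite-dimensional integral depending only on $(a,b,c)$, and then bound that integral uniformly in the parameters.

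\textbf{Step 1: the integrand through a lift.} Write $F:=D(s\xi_1,s\xi_2,\xi_3)$, with $s=\tanh\mu$, for the holomorphic-type lift of $v$. By \eqref{eq:HopfDilation} and \eqref{eq:HopfHorizontalProjection}, for any derivative $\partial$ we have
\[
|\partial v|_h^2=4\Bigl(\frac{|\partial F|^2}{|F|^2}-\frac{|\langle\partial F,F\rangle|^2}{|F|^4}\Bigr).
\]
From \eqref{def:XiProj} one checks that $\partial_\varphi\xi_1=-\xi_2$, $\partial_\varphi\xi_2=\xi_1$ and $\partial_\varphi\xi_3=0$. Hence
\[
\partial_\varphi F=sD(-\xi_2,\xi_1,0),\qquad \partial_\mu F=\operatorname{sech}^2\mu\, D(\xi_1,\xi_2,0).
\]
Both derivatives only involve the first two components. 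The quantities $|\xi_1|^2$, $|\xi_2|^2$ and $\Re(\bar\xi_1\xi_2)$ are explicit trigonometric polynomials in $(\nu,\varphi)$, and $|\xi_3|^2=\sin^2\nu$. Using the measure $d\omega=\sin\nu\,d\nu\,d\varphi$, we then change variables $\mu\mapsto s$, with
\[
d\mu=\frac{ds}{1-s^2},\qquad \sinh\mu=\frac{s}{\sqrt{1-s^2}}.
\]
This turns $R$ into an integral over $(s,\nu,\varphi)\in(0,1)\times(0,\pi)\times(0,2\pi)$ of a rational expression in $s$, $a$, $b$, $c$ and trigonometric functions.

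Convergence is easy to check from the same formulas, which also confirms that $R$ is finite. As $\mu\to\infty$ the $\partial_\mu$ term behaves like $\sinh\mu\,\operatorname{sech}^4\mu$, which is integrable. As $\mu\to0$ the $\partial_\varphi$ term behaves like $s^2/s$, which is also integrable. The integrand is homogeneous of degree zero in $(a,b,c)$, so we may normalise $c=1$.

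\textbf{Step 2: reduce to a one-parameter integral.} Next we carry out the $\varphi$-integration in closed form; these are integrals of ratios of quadratic forms in $(\cos\varphi,\sin\varphi)$. We then simplify, using the stationarity structure already extracted in Lemma \ref{lem:StationarityCPDeg2}: $H=D^2$ is real diagonal, so there are no cross terms. The hope is that the combination $\sinh\mu\,|\partial_\mu v|^2+|\partial_\varphi v|^2/\sinh\mu$ collapses after integration in $\varphi$ to a function of $s$ and $x=\cos\nu$ only.

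As a first, cheap attempt we would drop the subtracted term $|\langle\partial F,F\rangle|^2/|F|^4$, which only increases the integrand, and check whether the resulting upper bound is already below $8\pi$. If not, we keep that term and compute exactly.

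\textbf{Step 3: the uniform bound (main obstacle).} The hard step is proving the inequality $R(a,b,1)<8\pi$ uniformly over $0\le a\le b\le 1$, $b>0$, including the degenerate regimes $a=0$ and $b\to0$. We would first evaluate the symmetric Veronese case $a=b=c$ exactly, since there the integrals should be elementary, and use it as a benchmark.

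For the general case we would then look for monotonicity or convexity in the parameters. Concretely, we would write the integrand as a function of the weights $a|\xi_1|^2+b|\xi_2|^2$ against $|\xi_3|^2$, and apply an AM--GM or Cauchy--Schwarz type bound so as to dominate $R(a,b,1)$ by a one-variable expression in $t=\max(a,b)$. That expression is then checked to be $<8\pi$ by direct estimation, in the spirit of Lemma \ref{lem:AmIntegralEstimate}.

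If such a convexity reduction fails, the fallback is to treat the endpoint cases $a=0$ and $b\to0$ separately via dominated convergence. In the interior of the parameter region we would then use the explicit closed form obtained after integrating in $s$.
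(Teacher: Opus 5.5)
Your Step 1 matches the paper's setup: the lift formula for $|\partial v|^2$, the identities $\partial_\varphi\xi_1=-\xi_2$ and $\partial_\varphi\xi_2=\xi_1$, the substitution $s=\tanh\mu$, and normalisation by homogeneity. The whole content of the lemma, however, lies in Steps 2--3, and there you give no argument. The uniform inequality is left to a hoped-for collapse after the $\varphi$-integration, to an unspecified monotonicity/convexity or AM--GM/Cauchy--Schwarz domination, or to a closed-form case analysis that is never carried out.

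The ``cheap attempt'' cannot work. With $y=\sqrt{1-s^2}$, dropping the subtracted terms turns the $\partial_\varphi$ contribution into $2\int_0^1\int_{\mathbb S^2}\frac{M}{(1-y^2)L+c|\xi_3|^2}\,d\omega\,dy$. In the Veronese case $a=b=c$ this equals $4\pi\int_0^1\int_{-1}^1\frac{1+x^2}{2-y^2(1+x^2)}\,dx\,dy$, which is already larger than $8\pi$ on its own. By comparison, the true value of $R$ there is only about $22$.

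Your planned reduction to ``the weight $a|\xi_1|^2+b|\xi_2|^2$ against $|\xi_3|^2$'' also misses the structure. Since $\partial_\varphi$ maps $(\xi_1,\xi_2)$ to $(-\xi_2,\xi_1)$, the $\partial_\varphi$ term carries the swapped weight $M=a|\xi_2|^2+b|\xi_1|^2$ in its numerator, while the denominator carries $L=a|\xi_1|^2+b|\xi_2|^2$. Controlling this $L$/$M$ mismatch is precisely the difficulty.

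The missing idea is an exact splitting that isolates $8\pi$. After the second substitution $y=\sqrt{1-s^2}$ one has
\[
R=2\int_0^1\int_{\mathbb S^2}\frac{|\xi_3|^2\bigl[y^2cL+(1-y^2)ab|\xi_3|^2+cM\bigr]}{\bigl((1-y^2)L+c|\xi_3|^2\bigr)^2}\,d\omega\,dy.
\]
Part of the numerator is an exact $y$-derivative:
\[
\frac{d}{dy}\,\frac{yc|\xi_3|^2}{(1-y^2)L+c|\xi_3|^2}=\frac{c|\xi_3|^2\bigl(L+c|\xi_3|^2+y^2L\bigr)}{\bigl((1-y^2)L+c|\xi_3|^2\bigr)^2}.
\]
The differentiated function equals $0$ at $y=0$ and $1$ at $y=1$ wherever $\xi_3\neq0$. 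This gives exactly $R=8\pi+2I$, with
\[
I=\int_0^1\int_{\mathbb S^2}\frac{|\xi_3|^2\bigl[c(M-L)-(c^2-(1-y^2)ab)|\xi_3|^2\bigr]}{\bigl((1-y^2)L+c|\xi_3|^2\bigr)^2}\,d\omega\,dy,
\]
so the lemma is equivalent to $I<0$.

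The quarter turn $\varphi\mapsto\varphi+\pi/2$ swaps $L$ and $M$. Averaging the integrand of $I$ over it gives a pointwise expression that is $\leq0$ by three elementary facts:
\begin{enumerate}
\item both denominators are $\geq c|\xi_3|^2$;
\item $M-L=(a-b)|\xi_3|^2\cos2\varphi$;
\item $c^2\geq(1-y^2)\bigl(ab+(b-a)^2\bigr)$, which is where $c=\max\{a,b,c\}$ from Lemma \ref{lem:StationarityCPDeg2} enters.
\end{enumerate}
Strictness holds on a set of positive measure. This needs no closed-form integration and treats all admissible $(a,b,c)$ at once, including $a=0$ and small $b$. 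Without this splitting, or some substitute that isolates the $8\pi$ exactly and handles the $L$/$M$ asymmetry, your outline does not reach the inequality.
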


\begin{proof}
    Suppose $W:M \to \C^3\setminus \{0\}$ is a nonzero lift of a projective map defined on some smooth manifold $M$. Then as in the proof of Lemma \ref{lem:averageFt} one has the formula
    \begin{equation}
        \label{eq:AbstractDiffFormula}
        |d[W]|^2=4\left(\frac{|dW|^2}{|W|^2}-\frac{|\langle W,dW\rangle|^2}{|W|^4}\right).
    \end{equation}
    Recall that $v$ has the lift
    \[
    W=(\sqrt{a}s\xi_1,\sqrt{b}s\xi_2,\sqrt{c}\xi_3),
    \]
    with $\xi$ defined in \eqref{def:XiProj}. We easily compute
    \begin{equation}
        \label{eq:DiffFormulas}
        \begin{cases}
            \p_\mu W=(1-s^2)(\sqrt{a}\xi_1,\sqrt{b}\xi_2,0)\\
            \partial_\varphi \xi_1=-\cos\nu\sin\varphi-i\cos\varphi\\
            \p_\varphi \xi_2=\cos\nu\cos\varphi-i\sin\varphi.
        \end{cases}
    \end{equation}
    We introduce the notation
    \[
    L:=a|\xi_1|^2+b|\xi_2|^2,\quad M:=a|\xi_2|^2+b|\xi_1|^2.
    \]
    Applying \eqref{eq:AbstractDiffFormula} together with \eqref{eq:DiffFormulas} we compute
    \begin{align}
    \label{eq:DerivVInMu}
        |\p_\mu v|^2&=4\left(\frac{|\p_\mu W|^2}{s^2L+c|\xi_3|^2}-\frac{|\langle W,\p_\mu W \rangle|^2}{(s^2L+c|\xi_3|^2)^2}\right) \nonumber \\
        &=4\left(\frac{(1-s^2)^2L}{s^2L+c|\xi_3|^2}-\frac{(1-s^2)^2 s^2 L^2}{(s^2L+c|\xi_3|^2)^2}\right)\nonumber \\
        &=\frac{4(1-s^2)^2cL|\xi_3|^2}{(s^2L+c|\xi_3|^2)^2}.
    \end{align}
    In a similar way, one computes
    \begin{equation}
        \label{eq:DerivVInPhi}
        |\p_\varphi v|^2=\frac{4s^2|\xi_3|^2(s^2ab|\xi_3|^2+cM)}{(s^2L+c|\xi_3|^2)^2}.
    \end{equation}
    Recall the definition
    \[
    R_T=\frac12\int_{0}^T\int_{\mb S^2}\left(\sinh \mu|\p_\mu v|^2+\frac{1}{\sinh\mu}|\partial_{\varphi}v|^2\right)d\omega d\mu.
    \]
    We first insert \eqref{eq:DerivVInMu} and \eqref{eq:DerivVInPhi} to obtain
    \[
    R_T=2\int_0^T\int_{\mb S^2}\frac{|\xi_3|^2}{(s^2L+c|\xi_3|^2)^2}\left[\sinh\mu (1-s^2)^2cL+\frac{s^2}{\sinh\mu}(s^2ab|\xi_3|^2+cM)\right]d\omega d\mu.
    \]
    Next, we change variables in the integral above, with 
    \[
    s=\tanh\mu,\quad d\mu=\frac{ds}{1-s^2}.
    \]
    Together with the equality, $\sinh\mu=s/\sqrt{1-s^2}$, we obtain after taking $T \to \infty$, the equation
    \begin{equation*}
        R=\lim_{T \to \infty}R_T=2\int_0^1\frac{s}{\sqrt{1-s^2}}\int_{\mb S^2}\frac{|\xi_3|^2\left[(1-s^2)cL+s^2ab|\xi_3|^2+cM\right]}{(s^2L+c|\xi_3|^2)^2}d\omega ds.
    \end{equation*}
    Changing variables a second time with $y=\sqrt{1-s^2}$, $sds=-\sqrt{1-s^2}dy$ we derive
    \begin{equation}
        \label{eq:DefOfRPrecise}
        R=2\int_0^1\int_{\mb S^2}\frac{|\xi_3|^2\left[y^2cL+(1-y^2)ab|\xi_3|^2+cM\right]}{((1-y^2)L+c|\xi_3|^2)^2}d\omega dy.
    \end{equation}
    Next we want to find a splitting of the form $R=8\pi+2I$, for some negative integral $I$. To do so, we compute
    \begin{equation}
        \label{directDifferentiationy}
        \frac{d}{dy}\left(\frac{yc|\xi_3|^2}{(1-y^2)L+c|\xi_3|^2}\right)=\frac{c|\xi_3|^2(L+c|\xi_3|^2+y^2L)}{((1-y^2)L+c|\xi_3|^2)^2},
    \end{equation}
    and note that for $|\xi_3|^2>0$, we can evaluate
    \begin{equation}
        \label{derivativeEval}
        \frac{yc|\xi_3|^2}{(1-y^2)L+c|\xi_3|^2}\bigg|_{y=1}=1,\quad \frac{yc|\xi_3|^2}{(1-y^2)L+c|\xi_3|^2}\bigg|_{y=0}=0. 
    \end{equation}
    Moreover, we have the splitting
    \begin{align}
        \label{splittingOfNumerator}
        |\xi_3|^2(y^2&cL+(1-y^2)ab|\xi_3|^2+cM)\\
        &=c|\xi_3|^2(L+c|\xi_3|^2+y^2L)+|\xi_3|^2\left[c(M-L)-(c^2-ab)|\xi_3|^2-y^2ab|\xi_3|^2\right].
    \end{align}
    Since the integrand in \eqref{eq:DefOfRPrecise} is nonnegative, we can change the order of integration. Then we use the splitting \eqref{splittingOfNumerator}, the identity \eqref{directDifferentiationy} and the evaluation \eqref{derivativeEval} to obtain
    \begin{align*}
        R&= 2\int_0^1\int_{\mb S^2}\frac{|\xi_3|^2\left[y^2cL+(1-y^2)ab|\xi_3|^2+cM\right]}{((1-y^2)L+c|\xi_3|^2)^2}d\omega dy \\
        &=2\int_{\mb S^2}\left(\frac{yc|\xi_3|^2}{(1-y^2)L+c|\xi_3|^2}\bigg|_{y=1}- \frac{yc|\xi_3|^2}{(1-y^2)L+c|\xi_3|^2}\bigg|_{y=0}\right)d\omega\\
        &+2 I\\
        &=8\pi +2I,
    \end{align*}
    where we defined
    \[
    I:=\int_0^1\int_{\mb S^2}\frac{|\xi_3|^2\left[c(M-L)-(c^2-(1-y^2)ab)|\xi_3|^2\right]}{((1-y^2)L+c|\xi_3|^2)^2}d\omega dy.
    \]
    Hence, the proof is completed if we can show that
    \begin{equation}
        \label{eqInequalityforI}
        I<0.
    \end{equation}
    Recall the definition of $M$ and $L$. We can compute them explicitly to see that
    \begin{equation}
        \label{explicitMandL}
        \begin{cases}
            L&=a(\cos^2\nu\cos^2\varphi+\sin^2\varphi)+b(\cos^2\nu\sin^2\varphi+\cos^2\varphi)\\
            M&=a(\cos^2\nu\sin^2\varphi+\cos^2\varphi)+b(\cos^2\nu\cos^2\varphi+\sin^2\varphi).
        \end{cases}
    \end{equation}
    Therefore for fixed $\nu$, the replacement $\varphi \mapsto \varphi+\pi/2$ interchanges $M$ and $L$. In particular, if we denote the integrand in the definition of $I$ as $F(y,\nu,\varphi)$, we have for fixed $y \in (0,1)$ and $\nu \in (0,\pi)$ the equality
    \begin{equation}
        \label{splittingThroughIntegrand}
        \int_{0}^{2\pi}F(y,\nu,\varphi)d\varphi=\frac{1}{2}\int_{0}^{2\pi}(F(y,\nu,\varphi)+F(y,\nu,\varphi+\pi/2))d\varphi.
    \end{equation}
    Denoting
    \[
    A:=(1-y^2)L+c|\xi_3|^2,\quad B:=(1-y^2)M+c|\xi_3|^2
    \]
    one computes
    \begin{align}
        \label{eq:ExpressionOfIntegrand}
        F(y,\nu,\varphi)&+F(y,\nu,\varphi+\pi/2) \nonumber
        \\&=\frac{|\xi_3|^2}{A^2B^2}\big(c(1-y^2)(M-L)^2(A+B)-(c^2-(1-y^2)ab)(A^2+B^2)|\xi_3|^2\big)
    \end{align}
    We record 3 inequalities.
    \begin{enumerate}
        \item \label{ineq:it1} $A^2+B^2 \geq c |\xi_3|^2(A+B)$ which follows directly from $A,B\geq c|\xi_3|^2$.
        \item \label{ineq:it2} $(M-L)^2\leq (b-a)^2|\xi_3|^4$ which follows from the identity $M-L=(a-b)|\xi_3|^2\cos(2\varphi)$.
        \item \label{ineq:it3} $c^2-(1-y^2)(ab+(b-a)^2)\geq 0$ which follows as
        \[
        ab+(b-a)^2\leq a^2-ab+b^2\leq b^2\leq c^2,
        \]
        from the ordering $a\leq b\leq c$.
    \end{enumerate}
    Applying \ref{ineq:it1}--\ref{ineq:it3} to \eqref{eq:ExpressionOfIntegrand} one immediately derives that
    \begin{equation}
        \label{nonnegativePairedInt}F(y,\nu,\varphi)+F(y,\nu,\varphi+\pi/2)\leq 0.
    \end{equation}
    To complete the proof of \eqref{eqInequalityforI}, we have to see that the inequality is strict on a set of positive measure. For this, note that if $y \in (0,1)$ and $\nu \in (0,\pi)$ and moreover, $|\xi_1|,|\xi_2|>0$ we have that $|\xi_3|^2,L,M>0$. This implies that $A,B>c|\xi_3|^2>0$ and therefore,
    \[
    A^2+B^2> c|\xi_3|^2(A+B),
    \]
    meaning \ref{ineq:it1} is strict. Since the set where $|\xi_1|=0$ or $|\xi_2|=0$ is a set of measure $0$, \ref{ineq:it1} is strict on a set of positive measure. In particular \eqref{nonnegativePairedInt} is strict on a set of positive measure and hence, \eqref{eqInequalityforI} holds. This completes the proof.
\end{proof}

\subsection{Reduction to \texorpdfstring{$\CP^2$}{CP2} and proof of
Theorem \ref{thm:Class3d}}

\begin{proposition}
    \label{prop:CP2Reduction}
Let $u\in C^\infty(\mb S^2;\CP^n)$ be a nonconstant harmonic map.  If
$\hat u$ is minimizing, then the projective span of $u(\mb S^2)$ has
complex dimension at most two.
\end{proposition}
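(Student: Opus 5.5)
We regard $u$ as a linearly full harmonic map into $\mb P(W)\cong\CP^m$, with $m$ as in \eqref{eq:projectiveSpanDimension}, and assume for contradiction that $m\geq3$. Since $\mb P(W)$ is a totally geodesic, isometrically embedded copy of $\CP^m$, the plan has two parts: first show that $\hat u$, viewed as a map into $\CP^m$, is still minimizing; then play the upper bound \eqref{eq:upperEnergyCPn} off against the lower bound of Theorem \ref{thm:harmonicSphereDichotomyCPn}.

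For the first part, use the nearest-point (Hermitian orthogonal) projection $\pi\colon\CP^n\setminus\mb P(W^\perp)\to\mb P(W)$, $[z]\mapsto[P_Wz]$. Alternatively, argue directly in $\CP^n$: the averaging construction \eqref{eq:competitorCP2} only requires competitors in the ambient target. Indeed, the competitors $v_{a,\eps}=F^a_{t_\eps}\circ u$ with $a\in\mb P(W)$ and $F^a_t$ built in $W$ extend to projective-linear maps of $\CP^n$ preserving $\mb P(W)$. They therefore take values in $\mb P(W)$ and are admissible competitors for $\hat u$ in $\CP^n$. Their energies depend only on the metric of $\mb P(W)$, which is the Fubini--Study metric of $\CP^m$. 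So no projection argument is needed: averaging over $a\in\mb P(W)$ with the invariant measure of $\CP^m$, Lemma \ref{lem:averageFt} applied with $n=m$ yields exactly the computation in the proof of Propositions \ref{prop:step1}--\ref{prop:step2}. This gives
\[
E_{\mb S^2}(u)\leq 8\pi\Bigl(\int_0^1\sqrt{A_m(t)}\,dt\Bigr)^2<4\pi m,
\]
where the strict inequality is Lemma \ref{lem:AmIntegralEstimate} for $m\geq3$. One needs to check that for a.e.\ $a\in\mb P(W)$ the set $u^{-1}(\mb P(a^\perp\cap W))$ has measure zero, so that the inner trace is the constant $a$. This follows by Fubini exactly as before.

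For the second part, apply Theorem \ref{thm:harmonicSphereDichotomyCPn} to $u$, which is linearly full in $\mb P(W)$. If $u$ is holomorphic or anti-holomorphic, then $E_{\mb S^2}(u)=4\pi|d(u)|\geq4\pi m$. Otherwise $E_{\mb S^2}(u)\geq4\pi(3m-2)\geq4\pi m$. Both cases contradict the strict bound above, so $m\leq2$.

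The main obstacle is confirming that the degree, and hence the energy, of $u$ computed in $\mb P(W)$ agrees with that in $\CP^n$. This holds because the inclusion is a holomorphic isometry and pulls back $\omega_\FS$ to $\omega_\FS$. The remaining delicate point is the strictness in Lemma \ref{lem:AmIntegralEstimate}, which is taken from \cite{Fujioka1993}.
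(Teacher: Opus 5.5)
Your proposal is correct and follows the paper's proof. You restrict to competitors valued in $\mb P(W)\cong\CP^m$, which are admissible in $\CP^n$ because the inclusion is isometric. You obtain $E_{\mb S^2}(u)\leq 8\pi\bigl(\int_0^1\sqrt{A_m(t)}\,dt\bigr)^2<4\pi m$ from \eqref{eq:upperEnergyCPn} and Lemma \ref{lem:AmIntegralEstimate}, and this contradicts the lower bound $4\pi m$ from Theorem \ref{thm:harmonicSphereDichotomyCPn}. Your aside about the nearest-point projection $[z]\mapsto[P_Wz]$ is unnecessary, and that map is not energy-nonincreasing in general, since its differential blows up near $\mb P(W^\perp)$; you were right to drop it.
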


\begin{proof}
Let $W$ and $m$ be defined by \eqref{eq:projectiveSpanDimension}.  
After a target isometry, we may identify $\mb P(W)$ with $\CP^m$ and regard
$u$ as a linearly full harmonic map $\mb S^2\to\CP^m$.  The map $\hat u$
is still minimizing with respect to competitors taking values in this
subspace.  Theorem \ref{thm:harmonicSphereDichotomyCPn} therefore gives
\begin{equation}
\label{eq:lowerEnergy}
E_{\mb S^2}(u)\geq 4\pi m.
\end{equation}

If $m\geq3$, \eqref{eq:upperEnergyCPn} and Lemma
\ref{lem:AmIntegralEstimate} give
\[
    E_{\mb S^2}(u)
    \leq8\pi\left(\int_0^1\sqrt{A_m(t)}\,dt\right)^2
    <4\pi m,
\]
contradicting \eqref{eq:lowerEnergy}.  Hence $m\leq2$.
\end{proof}

\begin{proof}[Proof of Theorem \ref{thm:Class3d}]
Let $W$ and $m$ be defined by \eqref{eq:projectiveSpanDimension}.  Since
$u$ is nonconstant, $m\geq1$, while
Proposition \ref{prop:CP2Reduction} gives $m\leq2$.
If $m=1$, then $u$ takes values in the projective line $\mb P(W)$ and the
classification of minimizing tangent maps into $\CP^1$
\cite{Brezis1986} shows that
$u\colon\mb S^2\to\mb P(W)$ is an isometry.  Since projective lines are
totally geodesic in $\CP^n$, $u$ is an isometric embedding.  If
$m=2$, the same conclusion follows directly from Theorem
\ref{thm:CP2Classification}.  
\end{proof}

\section{Tangent maps \texorpdfstring{$\mb B^4\to \CP^n$}{B4 to CPn} with isolated singularities}\label{sec:B4CPn}

The purpose of this section is to prove Theorem \ref{thm:Class4d}. The proof is split into two main steps: in Theorem \ref{thm:HopfFactorizationCPn} we show that $u$ can be factored through the Hopf map, and then we apply the coarea formula and Theorem \ref{thm:Class3d} to conclude.

\subsection{Horizontally weakly conformal maps}

For the reader's convenience we recall the definitions of horizontal weak conformality
\cite[Definition~2.4.2]{Baird2003} and pseudo-horizontal weak conformality
\cite[Definition~8.2.3]{Baird2003}, together with the Fuglede--Ishihara
characterization of harmonic morphisms \cite[Theorem~4.2.2]{Baird2003}.

\begin{definition}
\label{def:HWC}
A map $u\in C^\infty(M,N)$ is \textit{horizontally weakly conformal} if, at every
$x\in M$, either $du_x=0$, or $du_x$ is surjective and its restriction
to the horizontal space
\[
    \mc H_x:=(\ker du_x)^\perp
\]
is conformal.  In terms of the adjoint, this is equivalent to
\[
    du_x\circ du_x^*
    =\lambda(x)^2\Id_{T_{u(x)}N}
    =\frac{|du_x|^2}{\dim N}\Id_{T_{u(x)}N}.
\]

If $(N,h,I)$ is almost Hermitian, then $u$ is
\textit{pseudo-horizontally weakly conformal} if
\begin{equation}
    \label{eq:PseudoHorizontalWeakConformality}
    \bigl[du_x\circ du_x^*,I_{u(x)}\bigr]=0
    \qquad\text{for every }x\in M.
\end{equation}
The first condition implies the second.  When $\dim_\R N=2$, the two
conditions are equivalent.

A \textit{harmonic morphism} is a map which  is harmonic and horizontally weakly conformal.
\end{definition}

We begin with a non-trivial generalization of a beautiful calculation due to Rivière \cite{Riviere2023}. 
He showed that any harmonic map $u\in C^\infty(\mb S^3;\mb S^2)$ with index 4 must be horizontally weakly conformal. Note that, by Theorem \ref{thm:Xin}, any non-constant smooth harmonic map must have index \textit{at least} 4. First, we replace the index assumption with the weaker assumption that one has an appropriate lower bound on $\lambda_1(u)$, rather than an upper bound on the index; second, we allow the domain to be a higher dimensional sphere, and the target to be a general complex projective space.

\begin{theorem}
    \label{thm:Riv}
    Let $m\geq 3$ and let $u \in C^\infty(\mb S^m;\CP^n)$ be a harmonic map such that $$2-m\leq \lambda_1(u).$$
    Then $u$ is pseudo-horizontally weakly conformal.
\end{theorem}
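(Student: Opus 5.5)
The plan is to follow Rivière's strategy of testing the second variation against the Jacobi fields $du(X_a)$ from Theorem \ref{thm:Xin}, but rotated by the complex structure $I$ of $\CP^n$. Concretely, for $k=1,\dots,m+1$ put $\phi_k:=du(X_{e_k})\in\mc V(u)$ and use $\eta_k:=I\phi_k$ as test sections in \eqref{eq:deflambdaone}. The hypothesis $\lambda_1(u)\geq 2-m$ gives
\[
\sum_k E''_{u,\mb S^m}(I\phi_k)\geq (2-m)\sum_k\|I\phi_k\|_{L^2}^2=(2-m)\sum_k\|\phi_k\|_{L^2}^2 .
\]
The goal is to compute the left-hand side exactly and find that it equals the right-hand side minus a nonnegative defect measuring the failure of \eqref{eq:PseudoHorizontalWeakConformality}.

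First, since $\CP^n$ is K\"ahler, $I$ is parallel, so $\nabla(I\phi)=I\nabla\phi$ and $|\nabla(I\phi)|=|\nabla\phi|$. Hence $E''(I\phi)$ and $E''(\phi)$ differ only in their curvature terms. Using \eqref{eq:curvsimple} with $X=I\phi$ and with $X=\phi$, and the identity $h(I\phi,Y)=-h(\phi,IY)$, these curvature terms differ by
\[
h(R(I\phi,Y)Y,I\phi)-h(R(\phi,Y)Y,\phi)=h(\phi,Y)^2-h(\phi,IY)^2 .
\]
It follows that
\[
E''(I\phi)=E''(\phi)-\int_{\mb S^m}\sum_\alpha\Big(h(\phi,u_\alpha)^2-h(\phi,Iu_\alpha)^2\Big).
\]
By \eqref{eq:Xin}, $E''(\phi_k)=(2-m)\|\phi_k\|_{L^2}^2$. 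It therefore remains to sum the defect over $k$.

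Write $\phi_k=\sum_\beta g(X_{e_k},e_\beta)u_\beta$ and apply the Parseval identity \eqref{eq:ParsevalConformalFields}. This gives
\[
\sum_k h(\phi_k,u_\alpha)^2=\sum_\beta h(u_\beta,u_\alpha)^2,\qquad \sum_k h(\phi_k,Iu_\alpha)^2=\sum_\beta h(u_\beta,Iu_\alpha)^2 .
\]
Set $T:=du\circ du^*$, a symmetric endomorphism of $T_{u(x)}\CP^n$. Then the two sums become
\[
\sum_{\alpha,\beta}h(u_\alpha,u_\beta)^2=\tr T^2,\qquad \sum_{\alpha,\beta}h(u_\beta,Iu_\alpha)^2=-\tr(TITI).
\]
Since $I^*=-I$ and $T^*=T$, the commutator $[T,I]$ is symmetric, and expanding with $I^2=-\Id$ gives
\[
|[T,I]|^2=\tr([T,I]^2)=2\tr T^2+2\tr(TITI).
\]
Therefore
\[
\sum_k E''(I\phi_k)=(2-m)\int|du|^2-\frac12\int_{\mb S^m}|[T,I]|^2,
\]
where we used $\sum_k\|\phi_k\|^2=\int|du|^2$ by \eqref{eq:ConformalFieldSums}. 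Comparing with the lower bound from the hypothesis yields $\int|[T,I]|^2\leq0$. By smoothness, $[du\,du^*,I]\equiv0$ everywhere, which is exactly \eqref{eq:PseudoHorizontalWeakConformality}.

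I expect the main obstacle to be bookkeeping rather than any conceptual difficulty. The sign and normalization of \eqref{eq:curvsimple} must produce exactly the coefficient that combines with $\tr T^2$ into $|[T,I]|^2$, and the algebraic identity for $|[T,I]|^2$ must be verified with the correct adjoint conventions. A minor additional point is that $\eta_k\in\mc V(u)$, which is immediate from smoothness. Note also that individual $\phi_k$ may vanish; this is harmless because the Rayleigh-quotient inequality $E''(\eta)\geq\lambda_1\|\eta\|^2$ holds trivially for $\eta=0$, so summing over $k$ is always legitimate.
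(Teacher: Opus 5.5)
Your proposal is correct and follows the same route as the paper: you test the stability inequality against $I\,du(X_{e_k})$, use that $I$ is parallel together with \eqref{eq:curvsimple} to isolate the defect $\sum_\alpha\bigl(h(\phi,u_\alpha)^2-h(\phi,Iu_\alpha)^2\bigr)$, sum it via \eqref{eq:ParsevalConformalFields}, and identify the result with $\frac12|[T,I]|^2$. The differences are only cosmetic: you compare the quadratic forms $E''(I\phi_k)$ and $E''(\phi_k)$ directly rather than computing $[J_u+(m-2)]\psi^k$ pointwise, and you check the commutator identity by a trace computation instead of the paper's pullback-metric/two-form lemma, which is equivalent since $|A+IAI|=|[A,I]|$.
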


\begin{proof}
If $u$ is constant there is nothing to prove, so we assume that $u$ is
nonconstant.
Let $g$ denote the round metric on $\mb S^m$.  
In the notation of Theorem \ref{thm:Xin}, set $X^k := X_{e_k}$ for simplicity, where $\{e_k\}_{k=1}^{m+1}$ is the standard basis of $\R^{m+1}$.  We also set, for $k=1, \dots, m+1,$
$$
\phi^k:=du(X^k),\qquad
\psi^k:=I \phi^k.
$$

A simple but crucial observation in our proof is that the connection Laplacian commutes with $I$.  Indeed, since
$\CP^n$ is K\"ahler, its complex structure is parallel: $\nabla I=0$.
Consequently, for every smooth
section $X$ of $u^*T\CP^n$, if we choose a local orthonormal frame
$(e_\alpha)$ which is parallel at a fixed but arbitrary point then, at that point,  using
\eqref{eq:ConnectionLaplacian}, we obtain
\begin{equation}
\label{eq:parallel}
\nabla^*\nabla(IX)
=-\sum_\alpha\nabla_{e_\alpha}\nabla_{e_\alpha}(IX)
=-\sum_\alpha I\nabla_{e_\alpha}\nabla_{e_\alpha}X
=I\nabla^*\nabla X.
\end{equation}
Thus, we have
\begin{align*}
[J_u + (m-2)]\psi^k
&= \nabla^* \nabla(I\phi^k)
   - \sum_\a R(I\phi^k,u_\a)u_\a
   + (m-2)I\phi^k\\
&\overset{\eqref{eq:parallel}}{=}
  I\bigl(\nabla^*\nabla\phi^k+(m-2)\phi^k\bigr)
  -\sum_\a R(I\phi^k,u_\a)u_\a\\
&\overset{\eqref{eq:Xin}}{=}
  \sum_\a\left(
  I R(\phi^k,u_\a)u_\a-R(I\phi^k,u_\a)u_\a
  \right).
\end{align*}
Using \eqref{eq:curvsimple}, we can now compute
\[
I R(X,Y)Y-R(IX,Y)Y
=-\left(h(X,Y)IY+h(X,IY)Y\right),
\]
and so
\[
[J_u+(m-2)]\psi^k
=-\sum_\a\left(
h(\phi^k,u_\a)Iu_\a+h(\phi^k,Iu_\a)u_\a
\right).
\]
Hence, by \eqref{eq:ParsevalConformalFields}, we find
\begin{align*}
\sum_k h([J_u+(m-2)]\psi^k,\psi^k)
&=\sum_{k,\a}\left(
h(\phi^k,Iu_\a)^2-h(\phi^k,u_\a)^2
\right)\\
&\overset{\eqref{eq:ParsevalConformalFields}}{=}\sum_{\a,\beta}\left(
h(u_\beta,Iu_\a)^2-h(u_\a,u_\beta)^2
\right)=2|u^*\omega_\FS|^2-|u^*h|^2.
\end{align*}
Here we used \eqref{eq:kahler} and the standard norm on two-forms, so that
\begin{equation}
\label{eq:norm2form}
2|u^*\omega_\FS|^2=\sum_{\a,\beta}
(u^*\omega_\FS)(e_\a,e_\beta)^2.
\end{equation}
Thus our assumption $2-m\leq\lambda_1(u)$ implies that $J_u + (m-2)$ is positive semidefinite, so
\begin{equation}
    \label{eq:PHWCIntegralInequality}
    0\leq
    \int_{\mb S^m}
    \left(2|u^*\omega_\FS|^2-|u^*h|^2\right).
\end{equation}
For each $x\in\mb S^m$, set $A_x:=du_x\circ du_x^*$.  The next lemma,
applied to $L=du_x$, gives
\[
2|u^*\omega_\FS|^2-|u^*h|^2
=-\frac12|A_x+IA_xI|^2.
\]
It follows from \eqref{eq:PHWCIntegralInequality} that
\[
0\leq-\frac12\int_{\mb S^m}|A_x+IA_xI|^2\leq0.
\]
Therefore $A_x+IA_xI=0$ at every $x$, or equivalently
$A_xI=IA_x$, as wished.
\end{proof}

\begin{lemma}
Let $(H,h,I,\omega)$ be a Hermitian vector space, let $E$ be a real
inner product space, and let $L\colon E\to H$ be linear.   Then
\begin{equation}
    \label{eq:PHWCLinearAlgebra}
    |L^*h|^2-2|L^*\omega|^2
    =\frac12|A+IAI|^2,\qquad A:=L L^*\colon H\to H,
\end{equation}
where the norm on two-forms is as in \eqref{eq:norm2form}.
\end{lemma}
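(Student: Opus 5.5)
The identity is pure linear algebra, so I would prove it by writing both sides as traces of polynomials in $A=LL^*$ and $I$. The only inputs are three facts. First, $A$ is self-adjoint with respect to $h$. Second, $I$ is an $h$-isometry with $I^2=-\Id$, so that $I^*=-I$. Third, $\omega(X,Y)=h(IX,Y)$, as in \eqref{eq:kahler}. The only place that needs care is the normalization of the norm on two-forms, and I will use \eqref{eq:norm2form} exactly as stated, so that $2|L^*\omega|^2$ is the full double sum over an orthonormal frame.

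Fix an orthonormal basis $(e_a)$ of $E$.

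\emph{The left-hand side.} For the first term,
\[
|L^*h|^2=\sum_{a,b}h(Le_a,Le_b)^2=\tr\bigl((L^*L)^2\bigr)=\tr(A^2),
\]
where the last equality is cyclicity of the trace. For the second term, note that $(L^*\omega)(e_a,e_b)=h(ILe_a,Le_b)$ is the $(b,a)$ entry of the operator $B:=L^*IL$ on $E$. Hence
\[
2|L^*\omega|^2=\sum_{a,b}(L^*\omega)(e_a,e_b)^2=\tr(B^*B)=\tr(L^*I^*LL^*IL)=\tr(I^*AIA)=-\tr(IAIA).
\]
Therefore the left-hand side of \eqref{eq:PHWCLinearAlgebra} equals $\tr(A^2)+\tr(AIAI)$.

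\emph{The right-hand side.} Set $C:=A+IAI$. It is self-adjoint, because $(IAI)^*=I^*AI^*=IAI$. Thus
\[
|C|^2=\tr(C^2)=\tr(A^2)+2\tr(AIAI)+\tr(IAI\,IAI).
\]
In the last term $I^2=-\Id$ gives $IAI\,IAI=-IA^2I$. Cyclicity then gives $\tr(-IA^2I)=-\tr(A^2I^2)=\tr(A^2)$. Hence
\[
\tfrac12|C|^2=\tr(A^2)+\tr(AIAI),
\]
which agrees with the left-hand side and proves \eqref{eq:PHWCLinearAlgebra}.

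\emph{Main obstacle.} I do not expect a genuine obstacle. The step to check carefully is the bookkeeping of signs and factors: the factor $2$ coming from the convention \eqref{eq:norm2form} for two-forms, and the sign coming from $I^*=-I$. Both can be sanity-checked on a holomorphic isometric $L\colon\R^2\to\C$. In that case $A=\Id$, so $A+IAI=0$. On the other side, $|L^*h|^2=2$ and $2|L^*\omega|^2=2$, so the left-hand side also vanishes, consistent with the identity.
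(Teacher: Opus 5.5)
Your proof is correct and takes essentially the same route as the paper. Both arguments express $|L^*h|^2$ and $2|L^*\omega|^2$ as Hilbert--Schmidt quantities in $A=LL^*$: the paper writes them as $|A|^2$ and $\langle IA,AI\rangle_{\mathrm{HS}}$, you write them as $\tr(A^2)$ and $-\tr(IAIA)$, and both then expand $\tfrac12|A+IAI|^2$ using $I^*=-I$ and $I^2=-\Id$.
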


\begin{proof}
The pullback metric $L^*h$ is represented on $E$ by the self-adjoint
operator $L^* L$, whereas the pullback two-form $L^*\omega$ is
represented by the skew-adjoint operator $L^* I L$.  Therefore
\[
|L^*h|^2=|A|^2,
\qquad
2|L^*\omega|^2
=|L^* I L|_{\mathrm{HS}}^2
=\langle IA,AI\rangle_{\mathrm{HS}},
\]
where $\langle\cdot,\cdot\rangle$ is the usual Hilbert--Schmidt inner product.
Then
\begin{align*}
\frac12|A+IAI|^2
=|A|^2+\langle A,IAI\rangle_{\mathrm{HS}}=|L^*h|^2-2|L^*\omega|^2
\end{align*}
since $|IAI|=|A|$ and
$\langle A,IAI\rangle_{\mathrm{HS}}
=-\langle IA,AI\rangle_{\mathrm{HS}}$.
\end{proof}

When $m=3$, the conclusion of Theorem \ref{thm:Riv} is stronger:

\begin{corollary}[Horizontal conformality for $m=3$]\label{cor:HWC}
Suppose that $m=3$ in Theorem \ref{thm:Riv}. Then, at every $x\in\mb S^3$, either $du_x=0$, or
$\operatorname{rank}du_x=2$, the space $\operatorname{im}du_x$ is a complex
line, and
\[
    du_x\big|_{(\ker du_x)^\perp}\colon
    (\ker du_x)^\perp\longrightarrow\operatorname{im}du_x
\]
is conformal.  In particular, if $n=1$, then $u$ is horizontally weakly
conformal.
\end{corollary}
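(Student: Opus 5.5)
The plan is pointwise linear algebra. Fix $x\in\mb S^3$ with $du_x\neq0$, put $L:=du_x\colon T_x\mb S^3\to T_{u(x)}\CP^n$ and $A:=LL^*$. Theorem \ref{thm:Riv} gives $AI=IA$.

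First, $A$ is self-adjoint, nonnegative, and commutes with $I$. So $\operatorname{im}A$ is $I$-invariant, and its eigenspaces are complex subspaces. Since $\operatorname{im}A=\operatorname{im}L$, the image of $du_x$ is a complex subspace of $T_{u(x)}\CP^n$. Its real dimension is even and at most $\dim T_x\mb S^3=3$. Because $du_x\neq0$, it equals $2$. Thus $\operatorname{rank}du_x=2$ and $\operatorname{im}du_x$ is a complex line.

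Next I restrict $A$ to this line $\ell:=\operatorname{im}du_x$. It is a self-adjoint operator commuting with $I|_\ell$. A real self-adjoint operator on a $2$-dimensional real space commuting with a complex structure must be a real multiple of the identity. Concretely, in an orthonormal basis $(e,Ie)$ of $\ell$, the relation $AI=IA$ forces $A|_\ell=\begin{pmatrix}\alpha&\beta\\-\beta&\alpha\end{pmatrix}$, and symmetry forces $\beta=0$. Hence $A|_\ell=\lambda^2\Id_\ell$ with $\lambda^2=|du_x|^2/2>0$.

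Now $L|_{(\ker L)^\perp}\colon(\ker L)^\perp\to\ell$ is an isomorphism. The identity $LL^*=\lambda^2\Id_\ell$ is equivalent to $L|_{(\ker L)^\perp}$ being $\lambda$ times an isometry: for $w\in\ell$ one has $|L^*w|^2=\lambda^2|w|^2$, and $L^*$ maps $\ell$ onto $(\ker L)^\perp$. So the restriction is conformal, as claimed.

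For $n=1$, $T_{u(x)}\CP^1$ is itself the complex line $\ell$. Hence $du_x$ is surjective whenever it is nonzero, and the previous paragraph is exactly Definition \ref{def:HWC}. There is no real obstacle here. The only point needing care is the dimension count: it uses that the image is $I$-invariant (complex), which follows from $A$ commuting with $I$ together with $\operatorname{im}A=\operatorname{im}L$.
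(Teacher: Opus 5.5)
Your proposal is correct and follows essentially the same route as the paper. Commutation with $I$ makes $\operatorname{im}(du_x du_x^*)=\operatorname{im}du_x$ a complex subspace of real dimension at most $3$, which forces rank $2$, and a nonnegative self-adjoint complex-linear operator on a complex line is a scalar multiple of the identity. You also spell out explicitly why $LL^*=\lambda^2\Id_\ell$ gives conformality of $L$ on $(\ker L)^\perp$, which the paper leaves implicit.
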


\begin{proof}
By Theorem \ref{thm:Riv}, the linear map
$    A_x:=du_x\circ du_x^*\colon T_{u(x)}\CP^n\to T_{u(x)}\CP^n$
commutes with $I$.  Hence $\operatorname{im}A_x=\operatorname{im}du_x$ is
$I$-invariant.  Since $\operatorname{rank}du_x\leq3$, either $du_x=0$, or
$\operatorname{rank}du_x=2$ and $\operatorname{im}du_x$ is a complex line.
In the latter case, since the restriction of $A_x$ to $\operatorname{im}du_x$ is
non-negative, self-adjoint and complex-linear, it is a non-negative scalar multiple of the
identity, which gives the claim.
\end{proof}

Using the theory of harmonic morphisms together with a transversality argument, we can now use Theorem \ref{thm:Riv} to prove:

\begin{theorem}
    \label{thm:HopfFactorizationCPn}
Let $u\in C^\infty(\mb S^3;\CP^n)$ be a nonconstant harmonic map with
$$-1 \leq \lambda_1(u).$$  Then there is an
isometry $Q\colon\mb S^3\to\mb S^3$ and a holomorphic map
$f\colon\CP^1\to\CP^n$ such that
\begin{equation}
    \label{eq:HopfFactorizationCPn}
    u=f\circ H\circ Q.
\end{equation}
\end{theorem}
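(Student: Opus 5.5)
The plan is to combine Corollary \ref{cor:HWC} with the classification of harmonic morphisms from $\mb S^3$ to surfaces. Since $m=3$ and $2-m=-1\leq\lambda_1(u)$, Theorem \ref{thm:Riv} and Corollary \ref{cor:HWC} apply. At every point $x\in\mb S^3$, either $du_x=0$, or $\operatorname{im}du_x$ is a complex line and $du_x$ is horizontally conformal onto it. The core task is to reduce from $\CP^n$ to a surface target, so that the factorization of harmonic morphisms \cite{Baird2003} (any nonconstant harmonic morphism $\mb S^3\to N^2$ is, up to an isometry of $\mb S^3$, a holomorphic or antiholomorphic map composed with the Hopf map) can be used.

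\emph{Step 1: rank and fibres.} I would first show that the critical set $\{du=0\}$ is small. By unique continuation for harmonic maps it has empty interior; I expect it is in fact empty, as for harmonic morphisms from $\mb S^3$ \cite{Baird2003}. On the open dense regular set, $\ker du$ is a line field. Corollary \ref{cor:HWC} says $u$ is a horizontally conformal submersion onto its local image, which is an immersed surface $\Sigma$ whose tangent planes are complex lines. A real surface in a Kähler manifold with $I$-invariant tangent planes is a complex curve. Hence locally $u$ factors as $u=\iota\circ\tilde u$, where $\tilde u$ is a horizontally conformal submersion onto a Riemann surface and $\iota$ is a holomorphic immersion. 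Because $\iota$ is holomorphic, hence totally geodesic-free but harmonic (holomorphic maps from Riemann surfaces into Kähler manifolds are conformal harmonic, and the composition formula applies), $\tilde u$ is harmonic with respect to the induced metric. So $\tilde u$ is a harmonic morphism, and its fibres are geodesics with $I$-compatible horizontal distribution.

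\emph{Step 2: global leaf space and Hopf structure.} Next I would show that the fibres, which are closed leaves of a one-dimensional foliation of $\mb S^3$ by geodesics (great circles) away from the critical set, form a Hopf fibration. The route is the Baird--Wood theorem that a conformal foliation by geodesics of $\mb S^3$, or of an open dense subset extending smoothly, is the Hopf fibration after an isometry $Q$. This gives $u$ constant on Hopf circles, so $u=f\circ H\circ Q$ for a map $f\colon\CP^1\to\CP^n$. The map $f$ is smooth because $H$ is a submersion.

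\emph{Step 3: holomorphicity of $f$.} Since $H$ is a Riemannian submersion up to the constant dilation $2$ from \eqref{eq:HopfDilation}, with minimal fibres, harmonicity of $u$ is equivalent to harmonicity of $f$. The property that $\operatorname{im}du$ is a complex line on which $du$ is conformal means $df$ is conformal with $I$-invariant image, so $f$ is $\pm$-holomorphic on each component of its regular set. By connectedness and unique continuation (or by the holomorphic/antiholomorphic dichotomy for the signed Jacobian), $f$ is globally holomorphic or antiholomorphic. In the antiholomorphic case, I compose $H$ with an orientation-reversing isometry of $\mb S^3$ that induces the antipodal reflection on $\CP^1$, absorbed into $Q$, which makes $f$ holomorphic.

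The main obstacle is Step 2: globalizing the local factorization through a surface and ruling out degenerations of the foliation near critical points or self-intersections of the image curve. My first attempt would be to work directly with the kernel line field on $\mb S^3$. I would show from harmonicity plus horizontal conformality (via the fundamental equation of harmonic morphisms) that it is tangent to geodesics with conformal holonomy, and then invoke the Baird--Wood classification of such foliations of $\mb S^3$. This avoids needing the image to be embedded.
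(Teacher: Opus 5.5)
Your Steps 1 and 3 are essentially fine. In Step 1, the reason the local factor $\tilde u$ is harmonic is that complex curves in a K\"ahler manifold are minimal, so the second fundamental form term in the composition formula drops out by horizontal conformality of $\tilde u$. In Step 3, the isometric lift of an antiholomorphic isometry of $\CP^1$, e.g.\ coordinatewise conjugation, is actually orientation-preserving on $\mb S^3$, but only its existence matters.

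The genuine gap is Step 2, which you flag as the main obstacle but do not resolve. The kernel line field of $du$, and the local factorization through a Riemann surface, exist only on the regular set $\mb S^3\setminus\{du=0\}$. The Baird--Wood theorem that a conformal foliation by geodesics is the Hopf fibration, however, is a statement about foliations of all of $\mb S^3$. On open subsets there are many non-Hopf conformal geodesic foliations, locally parametrised by holomorphic data. Your phrase ``or of an open dense subset extending smoothly'' assumes exactly what has to be proved.

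Nor can you assume the critical set is empty. For $g\colon\CP^1\to\CP^1$ holomorphic of degree at least $2$, the harmonic morphism $g\circ H$ is critical along the Hopf circles over the branch points of $g$, and the theorem you are proving allows branched $f$. Near such points the local image of $u$ may a priori be a singular complex curve (think of a cusp). So there you have neither the holomorphic immersion $\iota$ nor the harmonic-morphism structure of $\tilde u$ to control the foliation.

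The paper avoids analysing the critical set of $u$ at all by projecting to $\CP^1$ first. A dimension count ($\dim_\R u^*\mc M=4n-5<4n-4=\dim_\R\operatorname{Gr}_{n-1}(\C^{n+1})$) and Sard's theorem give a projective $(n-2)$-plane $\mb P(W)$ disjoint from $u(\mb S^3)$. Choosing $W$ near a $W_0$ adapted to $\operatorname{im}du_{x_0}$ makes $v:=P_W\circ u\colon\mb S^3\to\CP^1$ globally defined and nonconstant. Because $u$ is harmonic and pseudo-horizontally weakly conformal and $P_W$ is holomorphic, $v$ is a harmonic morphism on all of $\mb S^3$ \cite[Proposition~8.2.4(ii)]{Baird2003}. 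The global classification of harmonic morphisms from $\mb S^3$, which already accounts for their critical points, then gives $v=g\circ H\circ Q$.

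Since $\operatorname{rank}du_x\leq2$, the map $d(P_W)_{u(x)}$ is injective on $\operatorname{im}du_x$ wherever $dg\neq0$. Hence $\ker d(H\circ Q)_x\subset\ker du_x$ off finitely many Hopf circles, and continuity shows $u$ is constant on every fibre of $H\circ Q$. The identity $dg=dP_W\circ df$ then also fixes $f$ as holomorphic rather than antiholomorphic. To repair your route you would need such a projection anyway, either globally as in the paper or locally near each critical point combined with the local structure theory of critical points of harmonic morphisms to surfaces, in order to carry the foliation across $\{du=0\}$.
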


\begin{proof}
We first reduce to the case $n=1$, so assume $n\geq2$.  To perform this
reduction, we use Sard's theorem to show that a generic projective
$(n-2)$-plane in $\CP^n$ is disjoint from the lower-dimensional set
$u(\mb S^3)$.  Consider the Grassmannian
$\operatorname{Gr}_{n-1}(\C^{n+1})$ of complex $(n-1)$-dimensional
subspaces of $\C^{n+1}$.  Any element
$W\in\operatorname{Gr}_{n-1}(\C^{n+1})$ determines a projective $(n-2)$-plane
$\mathbb P(W)\subset\CP^n$.  Viewing points of $\CP^n$ as complex lines,
define the manifold
\[
    \mc M:=\{(\ell,W)\in\CP^n\times
    \operatorname{Gr}_{n-1}(\C^{n+1}):\ell\subset W\}.
\]
Then $\mc M\to\CP^n$ is a smooth fibre bundle with fibre
$    \operatorname{Gr}_{n-2}(\C^{n+1}/\ell)
    \cong \operatorname{Gr}_{n-2}(\C^n),$ which has real dimension $4(n-2)$.
Now define the pullback bundle
\[
	u^*\mc M
    =\{(x,W)\in\mb S^3\times\operatorname{Gr}_{n-1}(\C^{n+1}):
    u(x)\in\mathbb P(W)\},
\]
with its projection
$q\colon u^* \mc M\to\operatorname{Gr}_{n-1}(\C^{n+1})$.
 It follows that
\[
    \dim_\R u^* \mc M
    =\dim_\R\mb S^3+4(n-2)
   =4n-5<4n-4=\dim_\R\operatorname{Gr}_{n-1}(\C^{n+1}).
\]
Thus every point of $u^* \mc M$ is a critical point of $q$, hence by Sard's Theorem the set
$$q(u^* \mc M)=\{W\in\operatorname{Gr}_{n-1}(\C^{n+1}):
\mb P(W)\cap u(\mb S^3)\neq \emptyset\}$$
has measure zero.

 Since $u$ is nonconstant, we find $x_0$ with
$du_{x_0}\neq0$.  By Corollary \ref{cor:HWC},
$\operatorname{im}du_{x_0}$ is a complex line.  Using the action of
$\tp{U}(n+1)$ and the affine coordinates
\eqref{eq:AffineCoordinatesCPn}, we may therefore suppose that
\[
    u(x_0)=[1:0:\cdots:0],
    \qquad
    \operatorname{im}du_{x_0}
    =\operatorname{span}_{\C}\{\partial_{w^1}\}.
\]
For $W\in\operatorname{Gr}_{n-1}(\C^{n+1})$, let
\[
    P_W\colon\CP^n\setminus\mathbb P(W)\longrightarrow
    \mathbb P(\C^{n+1}/W)\simeq\CP^1,
    \qquad
    P_W([z])=[z\bmod W],
\]
be the projective linear projection. If we set
$W_0:=\operatorname{span}_{\C}\{e_2,\ldots,e_n\}$, where $\{e_j\}_{j=0}^{n}$ is the standard basis of $\C^{n+1}$,  then
\[
    P_{W_0}([z^0:\cdots:z^n])=[z^0:z^1],
\]
so, in the coordinates \eqref{eq:AffineCoordinatesCPn}, $P_{W_0}$ is the
map $(w^1,\ldots,w^n)\mapsto w^1$.  In particular,
\[
    u(x_0)\notin\mathbb P(W_0),
    \qquad
    d(P_{W_0})_{u(x_0)}(\partial_{w^1}) =\p_{w^1}\neq0.
\]
By continuity, both conditions continue to hold for every $W$ in a nonempty open
neighbourhood $U\subset\operatorname{Gr}_{n-1}(\C^{n+1})$ of $W_0$ and,
since $q(u^* \mc M)$ has measure
zero, we may choose $W\in U\setminus q(u^* \mc M)$.  For this choice,
$\mathbb P(W)\cap u(\mb S^3)=\emptyset$.  Set
\[
    v:=P_W\circ u\colon\mb S^3\to\CP^1.
\]
By the choice of $W$ we have $dv_{x_0}\neq0$;
in particular, $v$ is nonconstant.

Since $u$ is harmonic and pseudo-horizontally weakly conformal by Theorem
\ref{thm:Riv}, and $P_W$ is holomorphic between K\"ahler manifolds,
\cite[Proposition~8.2.4(ii)]{Baird2003} shows directly that $v$ is
a harmonic morphism. We have thus reduced the argument to the case of a map into $\CP^1$.  

The
classification of harmonic morphisms $\mb S^3\to \CP^1$
\cite[Theorem~6.7.7 and Remark~6.7.8]{Baird2003} gives an isometry
$Q\colon\mb S^3\to\mb S^3$ and a nonconstant holomorphic map
$g\colon\CP^1\to\CP^1$ such that
\begin{equation}
    \label{eq:ProjectedHopfFactorization}
    v=g\circ H\circ Q.
\end{equation}
We claim that $u$ is constant on the fibres of $H\circ Q$.  Indeed, at a
point $x$ for which $dg_{H(Q(x))}\neq0$, the map $dv_x$ has rank two.
Corollary \ref{cor:HWC} gives $\operatorname{rank}du_x\leq2$, and hence
$dv_x=d(P_W)_{u(x)}\circ du_x$ implies that $d(P_W)_{u(x)}$ is injective on
$\operatorname{im}du_x$.  Consequently, if
$X\in\ker d(H\circ Q)_x$, then $dv_x(X)=0$ and therefore $du_x(X)=0$.
As the branch set of $g$ is finite, by continuity
$du$ vanishes on $\ker d(H\circ Q)$ everywhere, and  thus $u$ is constant on
each fibre, which is a circle. It follows that
\[
    f(y):=u(x),\qquad x\in(H\circ Q)^{-1}(y),
\]
is a well-defined map $f\colon\CP^1\to\CP^n$ such that
$u=f\circ H\circ Q$ and which is smooth, since $H\circ Q$ is a submersion.

Since $H\circ Q$ is surjective, \eqref{eq:ProjectedHopfFactorization} gives
$P_W\circ f=g$.  Corollary \ref{cor:HWC} and the horizontal conformality of
the Hopf map show that, wherever $df\neq0$, the map $f$ is weakly conformal and
$\operatorname{im}df$ is a complex line.  Hence $df$ is either
complex-linear or complex-antilinear.  However, the
second possibility is excluded by the identity $dg=dP_W\circ df$, because both $dP_W$ and $dg$ are complex-linear, and $g$ has a finite branch set.
Thus $f$ is holomorphic, as wished.
\end{proof}

\subsection{Proof of Theorem \ref{thm:Class4d}}

Recall that $H\colon\mb S^3\to\mb S^2= \CP^1$ is the Hopf map and define
$\widetilde H\colon\B^4\to\B^3$ by
\[
    \widetilde H(x)=|x|^2H\left(\frac{x}{|x|}\right).
\]
The next lemma, although simple-looking, is crucial, and it relies fundamentally on the fact that the Hopf map has constant dilatation, cf.\ \eqref{eq:HopfDilation}.

\begin{lemma}
    \label{lem:ProjectionEnergy}
    Let $V\in W^{1,2}(\B^3;N)$.  Then
    $U:=V\circ\widetilde H$ belongs to $W^{1,2}(\B^4;N)$ and
    \[
    \int_{\B^4}|dU|^2\,dx
    =\pi\int_{\B^3}|dV|^2\,dy.
    \]
\end{lemma}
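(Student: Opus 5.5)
Here $H\colon\mb S^3\to\mb S^2$ is the Hopf map with the normalization of Section 3, so by \eqref{eq:HopfDilation} it is horizontally conformal with constant dilation $2$, and $\widetilde H$ is its quadratic cone. The plan is a pointwise chain-rule computation plus the coarea formula. I first treat smooth $V$ and extend by density at the end.

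\textbf{Step 1: $\widetilde H$ is horizontally conformal.} Write $x=r\theta$ with $\theta\in\mb S^3$, and $y=\widetilde H(x)=r^2H(\theta)$, so $|y|=r^2$.
\begin{itemize}
\item Radially, $d\widetilde H(\partial_r)=2rH(\theta)$, which is the radial direction in $\R^3$ with length $2r$.
\item On a horizontal unit vector $e\in\mc H_\theta$, the corresponding unit vector of $\R^4$ is $r^{-1}e$, and $d\widetilde H(r^{-1}e)=r^2\cdot r^{-1}dH_\theta(e)$. This is tangential with length $2r$.
\item The Hopf fibre direction is the kernel.
\end{itemize}
Hence $\widetilde H$ is horizontally conformal with dilation $\lambda(x)=2|x|=2|y|^{1/2}$. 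It follows that
\[
|dU|^2(x)=\lambda^2|dV|^2(\widetilde H(x))=4|y|\,|dV|^2(y).
\]

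\textbf{Step 2: coarea formula.} The horizontal Jacobian is $J\widetilde H=\lambda^3=8|y|^{3/2}$, and the fibre over $y\neq0$ is a Hopf circle of radius $r=|y|^{1/2}$, of length $2\pi|y|^{1/2}$. Therefore
\[
\int_{\B^4}|dU|^2\,dx=\int_{\B^3}4|y|\,|dV|^2(y)\,\frac{2\pi|y|^{1/2}}{8|y|^{3/2}}\,dy=\pi\int_{\B^3}|dV|^2\,dy,
\]
and the powers of $|y|$ cancel exactly. This cancellation is where the constant dilation of the Hopf map is used.

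\textbf{Step 3: general $V\in W^{1,2}$.} Apply the same coarea computation without derivatives, i.e.\ $\int_{\B^4}g\circ\widetilde H\,dx=\tfrac{\pi}{4}\int_{\B^3}g(y)|y|^{-1}\,dy$. Since $|y|^{-1}$ is unbounded, the constant-dilation trick is unavailable here, so this identity only bounds composition on weighted $L^1$ spaces. It is therefore not used for the derivative; its role is to control lower-order terms. For the $W^{1,2}$ extension itself, approximate $V$ by smooth $V_k\to V$ in $W^{1,2}(\B^3;\R^L)$. Step 2 applied to $V_k-V_j$ shows that $dU_k$ is Cauchy in $L^2$. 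The $L^2$ convergence of $U_k$ follows from a Poincaré-type bound after subtracting averages. Since $\widetilde H$ is smooth and Lipschitz, $U_k\to U$ a.e. along a subsequence, so $U\in W^{1,2}$ with $dU=\lim dU_k$, and the identity passes to the limit. Finally, $U$ takes values in $N$ a.e. because $\widetilde H$ pushes Lebesgue measure to the absolutely continuous measure $\tfrac{\pi}{4}|y|^{-1}dy$, so null sets pull back to null sets.

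\textbf{Main obstacle.} The only delicate point is getting the normalizations right: the dilation $2$, the fibre length, and the fact that $\widetilde H$ maps $\B^4$ onto $\B^3$ because $|y|=r^2\le1$. Once these are fixed, the cancellation in Step 2 gives the constant $\pi$ directly.
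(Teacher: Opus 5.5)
Your proof is correct and follows the paper's argument: you use the pointwise identity $|dU(x)|^2=4|x|^2|dV(\widetilde H(x))|^2$, which comes from the constant dilation $2$ of the Hopf map, and then a coarea computation in which the powers of $|x|$ cancel. The differences are only cosmetic: you apply the coarea formula directly to $\widetilde H$ (Jacobian $8|y|^{3/2}$, fibre length $2\pi|y|^{1/2}$) rather than to $H$ on $\mb S^3$ followed by the substitution $r\mapsto r^2$, and you justify the Sobolev chain rule by density, whereas the paper simply invokes it (the a.e.\ convergence there rests on $\widetilde H^{-1}$ preserving null sets, which you prove, not on $\widetilde H$ being Lipschitz).
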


\begin{proof}
    The following computation holds almost everywhere by the Sobolev chain
    rule.  Write $x=r\omega$.  At $\omega\in\mb S^3$, choose an orthonormal
    frame $(e_1,e_2,e_3)$ adapted to $H$, so that
    \[
        dH(e_1)=2E_1,\qquad dH(e_2)=2E_2,\qquad dH(e_3)=0,
    \]
    where $(H(\omega),E_1,E_2)$ is an orthonormal basis of $\R^3$; see, for instance, \cite[\S 2.3]{GuerraLamyZemas2025} for further details on these calculations.
    Since $\widetilde H(r\omega)=r^2H(\omega)$, the chain rule gives
    \[
        dU(\partial_r)
        =2r\,dV_{r^2H(\omega)}(H(\omega))
    \]
    and, for the spherical differential,
    \[
        d_\omega U(r,\cdot)(e_i)
        =\begin{cases}
        2r^2\,dV_{r^2H(\omega)}(E_i),&i=1,2,\\
        0,&i=3.
        \end{cases}
    \]
    It follows that
    \begin{equation}
        \label{eq:ComputationOfGradu}
        |dU(r\omega)|^2
        =4r^2|dV(r^2H(\omega))|^2.
    \end{equation}
    The coarea formula applied to the Hopf map gives
    \begin{equation}
        \label{eq:CoareaHopf}
        \int_{\mb S^3}F(H(\omega))\,d\omega
        =\frac{\pi}{2}\int_{\mb S^2}F(\xi)\,d\xi.
    \end{equation}
    Therefore, using \eqref{eq:ComputationOfGradu},
    \begin{align*}
        \int_{\B^4}|dU|^2\,dx
        &=4\int_0^1r^5\int_{\mb S^3}
          |dV(r^2H(\omega))|^2\,d\omega\,dr\\
        &=2\pi\int_0^1r^5\int_{\mb S^2}
          |dV(r^2\xi)|^2\,d\xi\,dr=\pi\int_{\B^3}|dV|^2\,dy,
    \end{align*}
    where the last equality follows by changing the radial variable from
    $r$ to $r^2$.
\end{proof}

As a consequence of the previous lemma and Theorem \ref{thm:Class3d}, we obtain:

\begin{corollary}\label{cor:minfactor}
    Let $f\colon\CP^1\to\CP^n$ be a nonconstant smooth map and set
    \[
        \hat u(x)=f\circ H\left(\frac{x}{|x|}\right),
        \qquad x\in\B^4\setminus\{0\}.
    \]
    If $\hat u$ is minimizing in $\B^4$, then $f$ is an
    isometric embedding.
\end{corollary}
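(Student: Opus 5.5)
The plan is to show that the $0$-homogeneous extension $\hat f(y)=f(y/|y|)$ on $\B^3$ is minimizing. Once that is known, Theorem \ref{thm:Class3d} applies to $f\colon\mb S^2\cong\CP^1\to\CP^n$ and gives the conclusion. We need $f$ to be smooth and harmonic. Smoothness is an assumption. For harmonicity, recall that $\hat u$ minimizing implies $u=f\circ H$ is harmonic on $\mb S^3$. Since $H$ is a Riemannian submersion (up to the constant factor $2$) with totally geodesic (minimal) fibres, i.e.\ a harmonic morphism, the composition $f\circ H$ is harmonic if and only if $f$ is harmonic. So $f$ is harmonic and nonconstant.

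The key observation is the identity $\hat u=\hat f\circ\widetilde H$ on $\B^4\setminus\{0\}$. Indeed, $\widetilde H(x)/|\widetilde H(x)|=H(x/|x|)$. Lemma \ref{lem:ProjectionEnergy} then gives
\[
\int_{\B^4}|d\hat u|^2=\pi\int_{\B^3}|d\hat f|^2 .
\]
Suppose, towards a contradiction, that $\hat f$ is not minimizing. Then there are $V\in W^{1,2}(\B^3;\CP^n)$ and some ball $B\Subset\B^3$ such that $V=\hat f$ outside $B$ and $E_{\B^3}(V)<E_{\B^3}(\hat f)$. By scaling and the $0$-homogeneity of $\hat f$, we may take the competitor to agree with $\hat f$ near $\partial\B^3$, for instance by working in $B_{1/2}$ and extending by $\hat f$. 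Set $U:=V\circ\widetilde H$. Lemma \ref{lem:ProjectionEnergy} gives $U\in W^{1,2}(\B^4;\CP^n)$ with
\[
E_{\B^4}(U)=\pi E_{\B^3}(V)<\pi E_{\B^3}(\hat f)=E_{\B^4}(\hat u).
\]
Moreover, $U=\hat f\circ\widetilde H=\hat u$ wherever $\widetilde H(x)\notin B$, in particular on a neighbourhood of $\partial\B^4$, since $|\widetilde H(x)|=|x|^2$. Hence $U$ is an admissible competitor with the same trace as $\hat u$, contradicting the minimality of $\hat u$. Therefore $\hat f$ is minimizing on compact subdomains of $\B^3$, and by homogeneity on all of $\R^3$.

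The main point to check carefully is the competitor bookkeeping. Minimality of $\hat f$ is defined with respect to arbitrary open $U'\Subset\B^3$, and we need the lifted competitor to be compactly perturbed in $\B^4$. Lemma \ref{lem:ProjectionEnergy} is stated on the whole unit balls, so I would apply it to $V$ and to $\hat f$ on $\B^3$ and take the difference. This requires both energies to be finite, and they are, since $f$ is smooth and $\hat f\in W^{1,2}(\B^3)$. The trace matching follows because $\widetilde H$ maps $\p\B^4$ onto $\p\B^3$ and preserves the radial levels $|x|^2$. A minor point is that $\widetilde H$ is not a diffeomorphism, but Lemma \ref{lem:ProjectionEnergy} already handles Sobolev membership, so nothing more is needed.
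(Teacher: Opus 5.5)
Your proposal is correct and follows the paper's proof. Both arguments write $\hat u=\hat f\circ\widetilde H$, lift any competitor for $\hat f$ on $\B^3$ via $\widetilde H$, compare energies with Lemma \ref{lem:ProjectionEnergy} to conclude that $\hat f$ is a minimizing tangent map, and then apply Theorem \ref{thm:Class3d}. Your additional bookkeeping is detail the paper leaves implicit: you check that $f$ is harmonic (using that $H$ is a harmonic morphism), and you take competitors agreeing with $\hat f$ near $\partial\B^3$ so that the lifted map is a compactly supported perturbation in $\B^4$.
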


\begin{proof}
    Define the $0$-homogeneous map $V\colon\B^3\to\CP^n$ by
    \[
        V(y)=f\left(\frac{y}{|y|}\right)
    \]
and note that   $\hat u=V\circ\widetilde H$. Now let  $W\in W^{1,2}(\B^3;\CP^n)$ have the same trace as
    $V$ on $\partial\B^3$, and set
    $w:=W\circ\widetilde H$.  By Lemma
    \ref{lem:ProjectionEnergy}, $w\in W^{1,2}(\B^4;\CP^n)$ is an admissible competitor and so
    \[
        \pi\int_{\B^3}|dV|^2
        =\int_{\B^4}|d\hat u|^2
        \leq\int_{\B^4}|dw|^2
        =\pi\int_{\B^3}|dW|^2
    \]
    where the inequality follows by minimality of $\hat u$.
    Thus $V\colon \B^3\to\CP^n$ is a minimizing tangent map
    and the conclusion follows from Theorem \ref{thm:Class3d}.
\end{proof}

\begin{proof}[Proof of Theorem \ref{thm:Class4d}]
    Minimality of $\hat u$ implies stability.  Hence Proposition
    \ref{prop:stabilitycharact}, applied with $m=3$, gives
    \[
        -1 \leq \lambda_1(u).
    \]
    Theorem \ref{thm:HopfFactorizationCPn} now provides an isometry
    $Q\colon\mb S^3\to\mb S^3$ and a nonconstant holomorphic map
    $f\colon\CP^1\to\CP^n$ such that
    \[
        u=f\circ H\circ Q.
    \]
    The isometry $Q$ is the restriction of an orthogonal map in $\R^4$ that
    we still denote by $Q$.  Since
    $\hat u\circ Q^{-1}$ is still minimizing, and has link $f\circ H$,
    Corollary \ref{cor:minfactor} implies that $f$ is an isometric
    embedding.  Taking $\iota=f$ proves the theorem.
\end{proof}

\subsection{Proof of Theorem \ref{thm:HopfMin}}
The case of $n=1$ was proven in Theorem 5.1 \cite{Coron1989}. Without loss of generality let us denote the boundary trace by the Hopf map
\[
    H(z^0,z^1)=[z^0e_0+z^1 e_1]
\]
for points $(z_0,z_1)\in S^3\subset\C^2$ and $\{e_0,e_1\}$ fixed orthonormal vectors of $\C^{n+1}$.
The strategy of the proof is as follows: for any Sobolev map $v\in W^{1,2}(\B^4;\CP^n)$, we first show the following averaging formula. For the anti-linear isometry $J:\C^{n+1}\to \C^{n+1}$, (w.l.o.g. $n$ is odd)
\[
    J(z^0, z^1, z^2,z^3,...)=(-\overline{z^1},\overline{z^0},-\overline{z^3},\overline{z^2},...)\quad J^2=-\id
\]
and a vector $a\in \C^{n+1}$, denote the projection onto the complex space $\text{span}(a,Ja)$ by $R_a$. Then, define the map $G_t^a:\CP^n\to\CP^n$ by
\[
    G_t^a([z])=[R_az+t(\id-R_a)z]
\]
in other words, the component in the projective line $\mathbb{P}(a\oplus Ja)$ remains untouched by $G_t^a$, whereas the component in the projected orthogonal complement $\mathbb{P}(\langle a,Ja\rangle^\perp)$ gets multiplied by the factor $t$. Using the notation \eqref{eq:Fta}, we can also write
\[
    G_t^a = F_t^a \circ F_t^{Ja}.
\]
Our first claim is the following averaging formula,
\medskip

\textbf{Claim:}
    \[
    \int_{\CP^n}E(G_t^a\circ v)d\mu(a)=E(v).
\]
\textbf{Proof of the claim:} As in the proof of Lemma \ref{lem:averageFt}, we first show that
\[
\int_{\CP^n}(G_t^a)^*\omega_{\mathrm{FS}}\,d\mu(a)=\omega_{\mathrm{FS}}.
\]
Write $A_t^a=R_a+t(\mathrm{id}-R_a)$, so that $G_t^a([z])=[A_t^az]$. By \eqref{eq:FubiniStudyPotential}, \eqref{eq:FubiniStudyCoordinates}, \eqref{eq:kahler},
\[
(G_t^a)^*\omega_{\mathrm{FS}}=\omega_{\mathrm{FS}}+2i\partial\bar\partial\log\frac{|A_t^az|^2}{|z|^2}.
\]
For $a,z\in S^{2n+1}$, since $a\perp Ja$, we have
\[
|A_t^az|^2=t^2+(1-t^2)|R_az|^2
=t^2+(1-t^2)\bigl(|\langle a,z\rangle|^2+|\langle a,Jz\rangle|^2\bigr).
\]
Since $(z,Jz)$ is an orthonormal pair, by $U(n+1)$-invariance of $\mu$, \eqref{eq:Unavg}, 
\[
[z]\longmapsto\int_{\CP^n}\log\frac{|A_t^az|^2}{|z|^2}\,d\mu(a)
\]
is constant. Averaging the preceding identity therefore gives
\[
\int_{\CP^n}(G_t^a)^*\omega_{\mathrm{FS}}\,d\mu(a)=\omega_{\mathrm{FS}}.
\]
Since $G_t^a$ is holomorphic, the same argument as in the proof of Lemma \ref{lem:averageFt} yields
\[
\int_{\CP^n}|dG_t^a(X)|^2\,d\mu(a)=|X|^2
\]
for every $X\in T\CP^n$. Applying this identity to $dv$ and using Fubini,
\[
\int_{\CP^n}E(G_t^a\circ v)\,d\mu(a)=E(v),
\]
as claimed.

Next, for appropriately chosen parameters $a$ and $t_k\to 0$, the maps 
\[
    v_k:= G^a _{t_k}\circ v:\B^4 \to \CP^n
\]
satisfy
\[
    \begin{cases}
        E(v_k)\leq E(v)\\
        v_k\rightharpoonup w\\
        w:\B^4\to\CP^1\subset\CP^n\\
        w\vert_{S^3}= H\quad \text{up to isometry}
    \end{cases}
\]
Indeed, for any sequence $t_k\to 0$, combining Fatou's lemma and the averaging identity,
\[
    \int_{\CP^n}\liminf\limits_{k\to\infty}E(G_{t_k}^a\circ v)d\mu(a)\leq E(v).
\]
Next, for any $a\in\CP^n$, the set of $a$ with $R_a(p)=0$ has measure zero (it is contained in the projective hyperplane orthogonal to $a$). By Fubini, we can choose $a$ such that
\[
    R_a e_0\neq 0, \quad R_a v\neq 0 a.e. \textbf{ on }\B^4, \quad \liminf\limits_jE(G_{t_k}^a)\leq E(v).
\]
For these choices $v_k:= G_{t_k}^a$, up to subsequence,
\[
    v_k\rightharpoonup w \text{ weakly in }W^{1,2}
\]
and given almost everywhere pointwise convergence and the condition $R_a v\neq 0 a.e. \text{ on }\B^4$, as $t_k\to 0$, 
\[
    v_k(a)\to [R_a v(x)]\text{ for almost every }x\in \B^4.
\]
In particular the map $w$ takes values in the projective line $\mathbb{P}(a\oplus Ja)\cong \CP^1$. It remains to check the boundary datum. Since by assumption $v|_{S^3}=H$, for $(z^0,z^1)\in S^3$ we have

\begin{align*}
v_k|_{S^3}(z^0,z^1)
&=G_{t_k}^a(H(z^0,z^1))\\
&=\left[R_a(z^0e_0+z^1e_1)+t_k(\id-R_a)(z^0e_0+z^1e_1)\right]\\
&\longrightarrow [z^0R_ae_0+z^1R_ae_1]=[z^0R_ae_0+z^1JR_ae_0],
\end{align*}
where we used $R_aJ=JR_a$ and $e_1=Je_0$. Since
\[
|R_a(z^0e_0+z^1e_1)|=|R_ae_0|>0
\]
on $S^3$, the convergence is uniform. By weak continuity of the trace operator, $w|_{S^3}$ is the Hopf map up to isometry.
Finally, using the minimizing property of the Hopf cone into $\CP^1$ (Theorem 5.1 \cite{Coron1989}),
\[
E(H)\leq E(w)\leq \liminf\limits_k E(v_k)\leq E(v).
\]
which completes the proof of the Theorem. \qed
\let\oldthebibliography\thebibliography
\let\endoldthebibliography\endthebibliography
\renewenvironment{thebibliography}[1]{
  \begin{oldthebibliography}{#1}
    \setlength{\itemsep}{0.5pt}
    \setlength{\parskip}{0.5pt}
}
{
  \end{oldthebibliography}
}

{\small
\bibliographystyle{abbrv-andre}
\bibliography{sources.bib}
}

\end{document}